\theoremstyle{plain}
\newtheorem{thm}{Theorem}[section]
\newtheorem{prop}[thm]{Proposition}
\newtheorem{lem}[thm]{Lemma}
\newtheorem{cor}[thm]{Corollary}
\newtheorem{conj}[thm]{Conjecture}
\numberwithin{equation}{section}
\theoremstyle{definition}
\theoremstyle{remark}
\newtheorem*{remark}{Remark}
\newtheorem*{acknowledgements}{Acknowledgements}
\newcommand{\what}{\widehat}
\newcommand{\wtilde}{\widetilde}
\newcommand{\RR}{\mathbb{R}}
\newcommand{\Sp}{\mathbb{S}}
\newcommand{\DD}{\mathbb{D}}
\newcommand{\HH}{\mathbb{H}}
\newcommand{\CC}{\mathbb{C}}
\newcommand{\CHAT}{\what{\mathbb{C}}}
\newcommand{\ZZ}{\mathbb{Z}}
\newcommand{\g}{g} 
\newcommand{\h}{h} 
\newcommand{\origin}{X_0} 
\newcommand{\Hbar}{\overline{\mathbb{H}} \cup \{\infty\}}
\newcommand{\mf}{\mathcal{MF}} 
\newcommand{\pmf}{\mathcal{PMF}} 
\DeclareMathOperator{\im}{Im}
\DeclareMathOperator{\teich}{\mathcal{T}}
\DeclareMathOperator{\el}{EL} 
\DeclareMathOperator{\divr}{div} 
\DeclareMathOperator{\area}{Area}
\DeclareMathOperator{\fix}{Fix}
\DeclareMathOperator{\crit}{Crit}
\begin{document}

\title[Toy Teichm\"uller spaces of real dimension 2]{Toy Teichm\"uller spaces of real dimension 2:\\ the pentagon and the punctured triangle}

\author[Y. Chen]{Yudong Chen}
\address{Faculty of Mathematics,
University of Cambridge, Centre for Mathematical Sciences, Wilberforce Road,
Cambridge CB3 0WA,
United Kingdom}
\email{eason.chenyudong@gmail.com}

\author[R. Chernov]{Roman Chernov}
\address{School Mathematics and Computer Sciences, V. N. Karazin Kharkiv
National University, 4 Svobody Sq., Kharkiv, 61022, Ukraine}
\email{roman1chernov@gmail.com}

\author[M. Flores]{Marco Flores}
\address{Departamento de Matem\'aticas, Universidad de Guanajuato, 
Jalisco s/n Mineral de Valenciana, C.P. 36240 
Guanajuato, Gto. M\'exico}
\email{marco.flores@cimat.mx}

\author[M. F. Bourque]{Maxime Fortier Bourque}
\address{Department of Mathematics, University of Toronto, 40 St. George Street, Toronto, ON, Canada M5S 2E4}
\email{mbourque@math.toronto.edu}

\author[S. Lee]{Seewoo Lee}
\address{Department of Mathematics, Pohang University of Science and Technology, 77 Cheongam-Ro, Nam-Gu, Pohang, Gyeongbuk, Korea 37673}
\email{seewoo5@gmail.com}

\author[B. Yang]{Bowen Yang}
\address{Amherst College, 220 South Pleasant Street
Amherst, MA 01002, USA}
\email{byang18@amherst.edu}


\begin{abstract}
We study two $2$-dimensional Teichm\"uller spaces of surfaces with boundary and marked points, namely, the pentagon and the punctured triangle. We show that their geometry is quite diffe\-rent from Teichm\"uller spaces of closed surfaces. Indeed, both spaces are exhausted by regular convex geodesic polygons with a fixed number of sides, and their geodesics diverge at most linearly.
\end{abstract}

\maketitle

\section{Introduction}

Let $\Sigma$ be a connected, compact, oriented surface with (possibly empty) boundary and let $P \subset \Sigma$ be a finite (possibly empty) set of marked points. The Teichm\"uller space $\teich(\Sigma,P)$ is the set of equivalence classes of pairs $(X,f)$ where $X$ is a bordered Riemann surface and $f:\Sigma \to X$ is an orientation-preserving homeomorphism (sometimes called a marking). Two pairs $(X,f)$ and $(Y,g)$ are equivalent if there is a conformal diffeomorphism $h: X \to Y$ such that $g^{-1} \circ h \circ f$ is isotopic to the identity rel $P$. The Teichm\"uller metric on $\teich(\Sigma,P)$ (to be defined in Section \ref{sec:prelim}) is complete, uniquely geodesic, and homeomorphic to $\RR^d$ for some $d\geq 0$. The dimension of $\teich(\Sigma,P)$ is
$$
d = 6g -6 + 3b + 2n + m + \sigma 
$$
where $g$ is the genus of $\Sigma$, $b$ is the number of boundary components, $n$ is the number of interior marked points, $m$ is the number of boundary marked points, and $\sigma$ is the dimension of the space of biholomorphisms $X \to X$ isotopic to the identity rel $f(P)$ for any $[(X,f)]$ in $\teich(\Sigma,P)$. This parameter $\sigma$ is equal to 
\begin{itemize}
\item $6$ for the sphere;
\item $4$ for the sphere with $1$ marked point;
\item $3$ for the disk;
\item $2$ for the torus, the sphere with $2$ marked points, and the disk with $1$ boundary marked point;
\item $1$ for the annulus, the disk with $1$ interior marked point, and the disk with $2$ boun\-dary marked points;
\item $0$ otherwise. 
\end{itemize}
When $\sigma=0$, the Teichm\"uller space $\teich(\Sigma,P)$ coincides with the space of complete hyperbolic metrics with totally geodesic boundary on $\Sigma \setminus P$ up to isometries isotopic to the identity.

After the pioneering work of Teichm\"uller, most people working on the subject res\-tricted their attention to the case where the surface $\Sigma$ is closed. One reason for this choice is that theorems are often simpler to state and prove in this context. Another reason is that by doubling a Riemann surface across its boundary, one obtains a closed surface with a symmetry, and most results which are true for closed surfaces hold automatically for surfaces with boundary via this doubling trick.

However, we feel that Teichm\"uller spaces of surfaces with boundary should not be ignored. They exhibit phenomena which are fundamentally different from the closed surface case. Moreover, they embed isometrically inside Teichm\"uller spaces of closed surfaces via the doubling trick. Thus what happens in these spaces also happens in spaces of closed surfaces. Finally, they serve a pedagogical purpose: the low-dimensional Teichm\"uller spaces are fairly easy to understand and illustrate the general theory in a concrete way.

For surfaces of small topological complexity, the Teichm\"uller metric can be des\-cribed explicitly. This is the case when $(\Sigma,P)$ is:
\begin{enumerate}
\item the disk with at most 3 marked points on the boundary (and none in the interior);
\item the disk with 1 marked point in the interior and at most 1 on the boundary;
\item the sphere with at most 3 marked points;
\item the disk with 4 marked points on the boundary;
\item the disk with 1 marked point in the interior and 2 on the boundary;
\item the disk with 2 marked points in the interior;
\item the annulus with at most 1 marked point on the boundary;
\item the sphere with 4 marked points;
\item the torus with at most 1 marked point. 
\end{enumerate}
The Teichm\"uller space $\teich(\Sigma,P)$ is a single point in cases (1)--(3), is isometric to $\RR$ in cases (4)--(7), and is isometric to the hyperbolic plane $\HH^2$ with curvature $-4$ in cases (8) and (9). We would like to add two entries to this list where we understand the Teichm\"uller metric at least qualitatively, namely when $(\Sigma,P)$ is:
\begin{enumerate}
\item[(10)] the disk with 5 marked points on the boundary;
\item[(11)] the disk with 1 marked point in the interior and 3 on the boundary.
\end{enumerate}
We call these surfaces the pentagon and the punctured triangle respectively, and denote them $\pentagon$ and $\triangletimes$. Their Teichm\"uller spaces are $2$-dimensional, yet are quite different from the hyperbolic plane. Note that if $(\Sigma,P)$ is:
\begin{enumerate}
\item[(12)] the annulus with 2 marked points on the same boundary component,
\end{enumerate}
then $\teich(\Sigma,P)$ is isometric to $\teich(\pentagon)$ (see Subsection \ref{sbsec:covering}). Only two Teichm\"uller spaces of dimension at most $2$ are missing from this list, namely when $(\Sigma,P)$ is:
\begin{enumerate}
\item[(13)] the disk with 2 marked points in the interior and 1 on the boundary;
\item[(14)] the annulus with 1 marked point on each boundary components.
\end{enumerate}
The Teichm\"uller spaces for (13) and (14) are isometric to one another. We hope to return to them in later work.

Our results are as follows.

\begin{thm} \label{thm:exhaustionpent}
$\teich(\pentagon)$ is a nested union of convex, regular, geodesic pentagons.
\end{thm}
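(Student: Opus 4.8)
The plan is to use the isometric action of the dihedral group $D_5$ of symmetries of the five marked points of $\pentagon$ on $\teich(\pentagon)$. The first step is to locate a center: the cyclic subgroup $\ZZ/5\leq D_5$ has a unique fixed point $\origin$, the conformally symmetric pentagon (equivalently, the regular ideal hyperbolic pentagon). Its existence is an instance of the Nielsen realization property, and uniqueness follows from the fact that $\teich(\pentagon)$ is uniquely geodesic together with the observation that a generator $\phi$ of $\ZZ/5$ acts effectively on the $2$-dimensional tangent space $T_\origin\teich(\pentagon)$ — it corresponds to a nontrivial automorphism of the symmetric pentagon acting on its quadratic differentials — and hence has no invariant line there; so no Teichm\"uller geodesic through $\origin$ is $\ZZ/5$-invariant, and two distinct fixed points cannot be joined by one. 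Consequently the entire fixed set of $\ZZ/5$ is $\{\origin\}$, it is preserved by $D_5$, and $D_5$ acts on the circle of directions at $\origin$ as the standard dihedral group.

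Next I would build the pentagons out of the five reflection axes of $D_5$. Each reflection in $D_5$ has fixed locus a complete Teichm\"uller geodesic through $\origin$ (fixed sets of isometric involutions of a uniquely geodesic space are geodesics); call these $\gamma_1,\dots,\gamma_5$. They are cyclically permuted by $\phi$, so they meet at $\origin$ in equal angles. Parametrize $\gamma_1$ by arclength with $\gamma_1(0)=\origin$ and, for each $t>0$, let $P_t$ be the closed region bounded by the geodesic segments joining the consecutive points of the $\phi$-orbit $v(t),\phi v(t),\dots,\phi^4 v(t)$, where $v(t)=\gamma_1(t)$. By construction $\phi$ cyclically permutes the vertices of $P_t$, the reflection fixing $\gamma_1$ fixes $v(t)$ while swapping its two incident edges, and the reflection swapping two adjacent vertices bisects the edge between them orthogonally; thus $P_t$ is $D_5$-invariant, its five edges are congruent, and $P_t$ is a \emph{regular} geodesic pentagon with $d(\origin,v(t))=t$.

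It then remains to prove three things about the family $\{P_t\}_{t>0}$: each $P_t$ is geodesically convex; $P_t\subseteq P_{t'}$ whenever $t<t'$; and $\bigcup_{t>0}P_t=\teich(\pentagon)$. Granting convexity and nesting, exhaustion follows quickly: $P_t$ is a convex region containing $\origin$ and the five points $\phi^k v(t)$ at distance $t$, hence it contains the ball about $\origin$ of radius $d(\origin,\partial P_t)=d(\origin,m(t))$ for an edge midpoint $m(t)$; monotonicity together with the at most linear divergence of geodesics in $\teich(\pentagon)$ forces $d(\origin,m(t))\to\infty$, and since $\teich(\pentagon)$ is the increasing union of balls around $\origin$, every point lies in some $P_t$.

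The main obstacle is the convexity and monotonicity of the $P_t$. Unlike in $\HH^2$, where these facts are classical, one must here argue directly with Teichm\"uller geodesics in the $2$-dimensional space $\teich(\pentagon)$: one needs that the geodesic segment from $v(t)$ to $\phi v(t)$ makes interior angles $<\pi$ with the adjacent edges, that the region it bounds is convex, and that these regions increase with $t$. The $D_5$-symmetry reduces all of this to a one-dimensional question about a single edge and its perpendicular bisector $\gamma_2$, which I would settle using the explicit description of the Teichm\"uller metric on $\teich(\pentagon)$ developed earlier in the paper together with the linear-divergence estimate for its geodesics. It is precisely this symmetry reduction, unavailable in higher-dimensional Teichm\"uller spaces, that makes the toy case amenable.
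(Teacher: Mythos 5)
Your setup agrees with the paper's: locate the origin, take the five reflection axes $\gamma_1,\dots,\gamma_5$, and form regular geodesic pentagons $P_t$ with vertices at the points $\gamma_j(t)$. Two small cautions there: the fixed-point set of an isometric involution of a uniquely geodesic space is convex but need not be a line, so to know each $\gamma_j$ is a bi-infinite geodesic you should use, as the paper does, that the quotient of $\pentagon$ by $\sigma_j$ is a quadrilateral, whose Teichm\"uller space is isometric to $\RR$; and convexity of $P_t$ comes for free if you define $P_t$ as an intersection of closed half-planes bounded by the edge geodesics (each convex because the space is uniquely geodesic) rather than as ``the region bounded by'' the edges. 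The genuine gap is in the exhaustion step, which is the entire content of the theorem. You propose to deduce $d(\origin,\partial P_t)\to\infty$ from the at most linear divergence of geodesic rays. This is circular: in the paper, linear divergence (Theorem \ref{thm:divergence}) is a \emph{consequence} of the exhaustion, proved via the inner and outer radius estimates for the pentagons $Q_a$, which themselves rest on Proposition \ref{prop:intersection}. You cannot invoke it as an input without an independent proof, which you do not supply.

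Even granting linear divergence as a black box, the implication fails. If the edge $[\gamma_2(t),\gamma_5(t)]$ passed within a bounded distance $M$ of the origin for all $t$ (as happens for the analogous construction in $\HH^2$, where the union of the $P_t$ is a proper ideal pentagon), the only conclusion available is $d(\gamma_2(t),\gamma_5(t))\geq 2t-2M$, a linear \emph{lower} bound entirely compatible with a linear \emph{upper} bound on divergence; divergence controls the infimal length of some path avoiding a ball, not the behaviour of the geodesic segment between the two ray points. What actually closes the argument in the paper is a concrete computation: the geodesics symmetric about $\gamma_1$ are exactly the Teichm\"uller geodesics $\eta_a$ through the symmetric $L$-shapes $\Phi_a$ and these foliate $\teich(\pentagon)$ (Lemma \ref{lem:perp} and Proposition \ref{prop:explicit}); the axis $\gamma_5$ is characterized as the locus where $\el(\alpha_1,\cdot)=\el(\alpha_4,\cdot)$ (Lemma \ref{lem:equalissym}); and explicit extremal length estimates together with the intermediate value theorem show that $\eta_a$ meets $\gamma_5$ once $a\geq 2$ (Proposition \ref{prop:intersection}), so the half-plane intersections $Q_a$ are compact pentagons $P_t$ that exhaust the space. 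Some quantitative argument of this kind --- not a soft symmetry reduction deferred to ``the explicit description of the metric'' --- is what your proof is missing.
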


\begin{thm} \label{thm:exhaustiontri}
$\teich(\triangletimes)$ is a nested union of convex, regular, geodesic triangles.
\end{thm}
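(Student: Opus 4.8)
The plan is to combine an explicit conformal model of $\teich(\triangletimes)$ with its symmetries, and to make the Teichm\"uller geodesics explicit through Kerckhoff's extremal-length formula. Since the disk carries a unique conformal structure and $PSL(2,\RR)$ acts simply transitively on cyclically ordered triples of boundary points, a point of $\teich(\triangletimes)$ is determined by the position of the interior marked point once the three boundary marked points are normalized, say to the cube roots of unity on $\partial\DD$; one checks that the marking adds nothing (the relevant mapping class group being trivial), so $\teich(\triangletimes)$ is naturally the open disk of interior positions. In this model there is a distinguished point $\origin$, the regular punctured triangle, and the dihedral symmetry group $G\cong S_3$ of the punctured triangle --- an order-$3$ rotation together with three reflections --- acts on $\teich(\triangletimes)$ by isometries of the Teichm\"uller metric, fixing $\origin$. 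The fixed locus of each reflection is a complete Teichm\"uller geodesic through $\origin$ (the fixed set of an isometric involution of a uniquely geodesic space being totally geodesic, and here one-dimensional), and the rotation permutes these three geodesics cyclically; their six ends split into two $G$-orbits of three, one consisting of directions in which the interior point collides with a boundary marked point and the other of directions in which it collides with the midpoint of the opposite boundary arc.

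Next I would set up the exhausting family. Fixing one of the two $G$-orbits of ends, let $\gamma_1,\gamma_2,\gamma_3$ be the corresponding geodesic rays from $\origin$, parametrized by arclength, and for $r>0$ let $T_r$ be the geodesic triangle with vertices $\gamma_1(r),\gamma_2(r),\gamma_3(r)$, i.e.\ the region bounded by the three Teichm\"uller geodesic segments joining consecutive vertices. Because the construction is $G$-equivariant, $T_r$ is $G$-invariant, hence equilateral and equiangular, so the remaining content of the theorem is: (i) each $T_r$ is convex; (ii) $T_r\subseteq T_{r'}$ for $r\le r'$; and (iii) $\bigcup_{r>0}T_r=\teich(\triangletimes)$. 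Granting (i), statement (ii) is formal: $T_{r'}$ is a topological disk preserved by the order-$3$ rotation, so Brouwer's theorem produces a fixed point, necessarily $\origin$ since that is the rotation's unique fixed point in $\teich(\triangletimes)$; thus $\origin\in T_{r'}$, so by convexity each segment $[\origin,\gamma_i(r')]=\gamma_i([0,r'])$ lies in $T_{r'}$, in particular $\gamma_i(r)\in T_{r'}$, and convexity of $T_{r'}$ then forces $T_r\subseteq T_{r'}$. For (iii), note that by symmetry $\origin$ is equidistant from the three sides of $T_r$, so if that distance tends to infinity with $r$ then every metric ball about $\origin$ is eventually contained in some $T_r$, and since every point of $\teich(\triangletimes)$ lies at finite distance from $\origin$ we are done; establishing this ``inradius to infinity'' is where the flat-like geometry of the space enters --- in a hyperbolic-type space the side $[\gamma_1(r),\gamma_2(r)]$ would accumulate onto a bi-infinite geodesic at bounded distance from $\origin$, whereas here it should recede to infinity, in line with the statement that geodesics in $\teich(\triangletimes)$ diverge at most, and at least, linearly.

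The main obstacle is convexity, (i), since Teichm\"uller balls are not convex in general and no curvature bound is available; I would therefore compute the Teichm\"uller metric and its geodesics essentially explicitly. The punctured triangle has only finitely many topological types of essential arcs --- endpoints at the four marked points, not isotopic into the boundary --- forming a few $G$-orbits, and for any concrete conformal structure each such arc has a computable extremal length, a classical extremal-length problem for a quadrilateral with a distinguished interior point amenable to Schwarz--Christoffel maps together with the symmetry. Kerckhoff's formula $d_{\teich}(X,Y)=\tfrac12\log\sup_\gamma \el_Y(\gamma)/\el_X(\gamma)$ then expresses the distance as the supremum of finitely many explicit log-ratios, from which the extremal (Teichm\"uller) quadratic differential in each direction --- hence each geodesic, and in particular those bounding and entering the $T_r$ --- can be read off. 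Convexity of $T_r$ then reduces to showing that, for any two of its points, the extremal-length functions controlling its three sides do not exceed their boundary values along the connecting geodesic, a monotonicity statement for these specific functions along Teichm\"uller geodesics that I expect to come down to elementary inequalities in the explicit model.

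Finally, I would carry out the parallel but slightly simpler analysis for the pentagon (Theorem~\ref{thm:exhaustionpent}) first: $\pentagon$ and $\triangletimes$ have a common double --- a sphere with five marked points, for two different anti-holomorphic involutions --- and closely related arc combinatorics, so the identification of the model, the center and the symmetry group (now $D_5$, producing pentagonal rather than triangular $T_r$), the extremal-length computations, and the proofs of convexity, nestedness and exhaustion all proceed along the same lines, after which transporting the argument to the punctured triangle is largely bookkeeping. The chief risk is that the extremal-length functions fail to admit closed forms clean enough to verify the required inequalities outright, in which case one falls back on sufficiently sharp estimates, which suffices since all four assertions are qualitative.
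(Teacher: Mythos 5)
Your global architecture matches the paper's: the disk model with the interior marked point as coordinate, the $D_3$-action by isometries, the three axes $\gamma_j$ as fixed loci of reflections (which are geodesics -- the paper gets this from the orbifold covering onto a quadrilateral rather than from ``fixed sets of involutions are totally geodesic,'' but either works), and the reduction of the theorem to convexity, nestedness and exhaustion of equivariant triangles with vertices on the axes. Your Brouwer argument for nestedness is correct once convexity is granted. The genuine gaps are in the two load-bearing steps, (i) and (iii), and in both cases the mechanism you propose is not the one that works. For convexity, you plan to show that ``the extremal-length functions controlling the three sides do not exceed their boundary values along the connecting geodesic.'' But the sides of your $T_r$ are not level sets of extremal-length functions, and the extremal length of a fixed arc is neither monotone nor usefully convex along a general Teichm\"uller geodesic, so it is not clear what inequality you would actually verify. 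The paper sidesteps all of this: it first proves that a closed half-plane (the closure of a complementary component of a complete geodesic) is convex -- a two-line argument using only uniqueness of geodesics -- and then defines each polygon as an intersection of such half-planes, so convexity is free. You could recover your step (i) by showing that your $T_r$ coincides with the intersection of the three half-planes bounded by the full geodesics through pairs of vertices, but as written your route does not go through.

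For exhaustion, you reduce to ``the inradius of $T_r$ tends to infinity'' and then appeal to linear divergence; no mechanism is supplied, and this is the hardest part of the theorem. The paper does not prove exhaustion via an inradius estimate at all. Instead it shows that the geodesics symmetric about a fixed axis $\gamma_1$ (those reversed by $\sigma_1$) \emph{foliate} $\teich(\triangletimes)$: there is exactly one through each point, and they are realized explicitly as the geodesics $\eta_a$ generated by a concrete one-parameter family of flat structures $\Phi_a$, with $a\mapsto\el(\alpha_1,\Phi_a)$ a bijection onto $(0,\infty)$. The half-planes bounded by the $\eta_a$ therefore exhaust the space by construction, and the triangles are intersections of their $D_3$-translates. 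The remaining work -- showing each $\eta_a$ actually meets the other two axes, so that the triangle is compact -- is done by characterizing $\gamma_3$ as the locus where $\el(\alpha_1,X)=\el(\alpha_2,X)$ and applying the intermediate value theorem to explicit extremal-length bounds along $\eta_a$. Some version of these explicit flat-structure computations seems unavoidable; note also that for the punctured triangle the inradius of the resulting triangles grows only like $\log\log(1/a)$ while the circumradius grows like $\log(1/a)$, so the geometry is far from the near-Euclidean picture your heuristic suggests, and the transfer from the pentagon is not mere bookkeeping (the paper has to introduce a separate family of hexagons to get the divergence bound). As it stands, both the convexity and the exhaustion steps are missing the ideas that make them work.
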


Note the immediate consequence:

\begin{cor}
The convex hull of any compact set in $\teich(\pentagon)$ or $\teich(\triangletimes)$ is compact.
\end{cor}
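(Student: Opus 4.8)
The plan is to derive the corollary essentially immediately from Theorems~\ref{thm:exhaustionpent} and~\ref{thm:exhaustiontri}. Let $\teich$ denote either $\teich(\pentagon)$ or $\teich(\triangletimes)$, and by the relevant theorem write $\teich = \bigcup_t Q_t$, where $(Q_t)$ is a nested family of convex, regular, geodesic polygons indexed so that $Q_t \subseteq Q_{t'}$ whenever $t \le t'$. Each $Q_t$ is bounded by finitely many geodesic segments, hence is a compact topological disk, and by convexity it contains every geodesic segment between two of its points. Let $K \subseteq \teich$ be compact.

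First I would verify that a single polygon already contains $K$. Since the family is nested with union $\teich$ and the polygons grow without bound, every point of $\teich$ lies in $\operatorname{int}(Q_t)$ for all sufficiently large $t$; thus $\{\operatorname{int}(Q_t)\}_t$ is an increasing open cover of $K$, and compactness together with nestedness yields a single index $t_0$ with $K \subseteq Q_{t_0}$. Convexity of $Q_{t_0}$, applied iteratively, then gives $\operatorname{conv}(K) \subseteq Q_{t_0}$, and since $Q_{t_0}$ is compact it follows that $\overline{\operatorname{conv}(K)}$ is compact.

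The step I expect to be the only genuine obstacle is showing that $\operatorname{conv}(K)$ is already closed, and not merely that it has compact closure. Setting $K_0 = K$ and letting $K_{n+1}$ be the union of all geodesic segments $[x,y]$ with $x,y \in K_n$, one has $\operatorname{conv}(K) = \bigcup_{n \ge 0} K_n$, and each $K_{n+1}$ is compact, being the continuous image of $[0,1] \times K_n \times K_n$ under the map sending $(s,x,y)$ to the point at parameter $s$ on the geodesic from $x$ to $y$ (continuity of this map follows from unique geodesics and completeness of the Teichm\"uller metric). It therefore suffices to prove that this increasing chain stabilizes, i.e.\ that $K_{N+1} = K_N$ for some finite $N$. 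A Carath\'eodory-type argument makes this plausible, given that $\teich$ is only $2$-dimensional and is exhausted by honest convex polygons, but turning it into a clean proof is the crux; failing that, one states and proves the corollary for the \emph{closed} convex hull, for which the second paragraph already suffices.
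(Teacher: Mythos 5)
Your proposal is correct and follows essentially the same route as the paper: the exhaustion theorem places the compact set inside a single compact convex polygon, which then contains the convex hull. The ``only genuine obstacle'' you identify is finessed by the paper in exactly the way you suggest as a fallback --- its proof reads ``The (closed) convex hull of $C$, being contained in $P$, is therefore compact,'' i.e.\ it only establishes compactness of the closed convex hull and does not address whether the convex hull itself is already closed.
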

\begin{proof}
Let $C$ be a compact set in $\teich(\pentagon)$ or $\teich(\triangletimes)$. By the previous theorem, $C$ is contained in some compact convex polygon $P$. The (closed) convex hull of $C$, being contained in $P$, is therefore compact.
\end{proof}

Whether this property holds for Teichm\"uller spaces in general is an open question of Masur \cite{MasurSurvey}.

We use these exhaustions by polygons to estimate the rate of divergence between geodesics in $\teich(\pentagon)$ and $\teich(\triangletimes)$. In any metric space, the divergence between two distinct geodesic rays $\gamma_1$ and $\gamma_2$ with $\gamma_1(0)=\gamma_2(0)=p$ at distance $t$ is defined as the infimum of lengths of paths joining $\gamma_1(t)$ and $\gamma_2(t)$ outside the ball of radius $t$ around $p$. In Euclidean space the divergence is linear in $t$ while it is exponential in hyperpolic space. Teichm\"uller spaces of closed surfaces are in some sense hybrids between Euclidean spaces and hyperbolic spaces since they contain quasi-isometric copies of both \cite{Bowditch} \cite{LeiningerSchleimer}. In that vein, Duchin and Rafi proved in \cite{DuchinRafi} that the divergence between geodesic rays is at most quadratic (and can be quadratic) in Teichm\"uller spaces of closed surfaces with marked points, when the dimension is at least $4$. In contrast, we show that divergence is at most linear in $\teich(\pentagon)$ and $\teich(\triangletimes)$.

\begin{thm} \label{thm:divergence}
The rate of divergence between any two geodesic rays from the same point in $\teich(\pentagon)$ or $\teich(\triangletimes)$ is at most linear. 
\end{thm}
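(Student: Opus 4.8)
The plan is to use the exhaustions of Theorems~\ref{thm:exhaustionpent} and~\ref{thm:exhaustiontri} to build, for each large $t$, a short detour from $\gamma_1(t)$ to $\gamma_2(t)$ avoiding the open ball $B(p,t)$ of radius $t$ about $p$. Besides the bare statements of those theorems I will also use two features provided by their proofs: the polygons $P_r$ in the exhaustion may be taken to form a continuous increasing family with a common center $o$ and inradius $r$; and there is a constant $\lambda \ge 1$ with $P_r \subseteq B(o,\lambda r)$ for every $r$, i.e.\ the circumradius of $P_r$ is at most $\lambda$ times its inradius. This ``roundness'' is the crucial point, and it is exactly what makes these spaces unlike $\HH^2$: there, regular pentagons have inradius bounded by roughly $1.12$ while their circumradius tends to infinity, so they neither stay round nor exhaust the plane --- consistent with the exponential divergence in $\HH^2$.

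Fix $p$, put $m = d(p,o)$, and let $\gamma_1 \ne \gamma_2$ be unit-speed geodesic rays from $p$. For $t$ large we set $r = t+m$ and work inside $P_r$. Since $d(o,\gamma_i(t)) \le d(p,\gamma_i(t)) + m = t+m = r$, the points $\gamma_i(t)$ lie in the open ball $B(o,r) \subseteq \mathrm{int}\,P_r$, and likewise $p \in \mathrm{int}\,P_r$. As $P_r$ is compact and $\gamma_i$ is an infinite ray, it eventually leaves $P_r$; by convexity the set of times it spends in $P_r$ is an interval $[0,s_i]$ with $s_i > t$ and $q_i := \gamma_i(s_i) \in \partial P_r$, and since $\gamma_i$ is a geodesic from $p$ we have $s_i = d(p,q_i) \le d(o,q_i) + m \le \lambda r + m$.

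The detour is: run along $\gamma_1$ from $\gamma_1(t)$ to $q_1$; then run along the shorter of the two arcs of the Jordan curve $\partial P_r$ from $q_1$ to $q_2$; then run backwards along $\gamma_2$ from $q_2$ to $\gamma_2(t)$. The two $\gamma_i$-pieces avoid $B(p,t)$ because $d(p,\gamma_i(s)) = s \ge t$ along them, and the arc avoids $B(p,t)$ because every $x \in \partial P_r$ satisfies $d(p,x) \ge d(o,x) - m \ge r - m = t$, the inradius $r$ being the distance from $o$ to $\partial P_r$. The $\gamma_i$-pieces have total length $(s_1-t)+(s_2-t) \le 2(\lambda r + m - t) = 2(\lambda-1)t + 2(\lambda+1)m$, and the arc has length at most half the perimeter of $P_r$, hence at most $\tfrac12 k\,\mathrm{diam}\,P_r \le \tfrac12 k\cdot 2\lambda r = k\lambda(t+m)$, where $k \in \{5,3\}$ is the number of sides. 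Summing, the detour has length $O(t)$, so the divergence of $\gamma_1$ from $\gamma_2$ at time $t$ is at most linear; the computation for $\triangletimes$ is identical with $k = 3$.

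The genuine obstacle is not this construction but securing its geometric input, namely the uniform bound $P_r \subseteq B(o,\lambda r)$ (equivalently, perimeter $O(r)$). This does not follow formally from ``nested union of regular geodesic polygons'' --- an increasing family of regular polygons could a priori become ever more eccentric --- so it must be read off from the explicit description of the $P_r$ obtained while proving Theorems~\ref{thm:exhaustionpent} and~\ref{thm:exhaustiontri} (which is also where the common center and the continuity of the family come from, and which one can alternatively package as a quasi-isometry with $\RR^2$). Granting that, the remainder is the elementary estimate above.
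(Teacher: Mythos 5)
Your construction is sound in the abstract, and for $\teich(\pentagon)$ it is essentially the paper's argument: there the exhausting pentagons $Q_a$ (with $a=e^{8t/3}$) do contain $B(o,t)$ and lie in $B(o,3t)$, with perimeter $O(t)$, so your ``roundness'' constant $\lambda$ exists and the detour along $\partial Q_a$ gives linear divergence. (Your treatment of a general basepoint $p$, running the rays out to $\partial P_{t+m}$ directly, is a slightly cleaner packaging of the paper's triangle-inequality reduction to the origin; both are fine.)

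The genuine gap is in the $\teich(\triangletimes)$ case, and it sits exactly at the hypothesis you flagged as the crucial input and then granted. For the exhaustion of $\teich(\triangletimes)$ by regular geodesic triangles $Q_a$, the roundness bound $P_r \subseteq B(o,\lambda r)$ is \emph{false}: the paper computes that the inner radius of $Q_a$ is of order $\log\log\frac{1}{a}$ while the outer radius and perimeter are of order $\log\frac{1}{a}$, so the circumradius-to-inradius ratio is unbounded and your detour only yields an exponential bound on divergence. No reparametrization saves this, since it is a statement about the shapes themselves. The paper's fix is to abandon the boundaries of the convex exhaustion altogether and instead use a different family of closed curves: hexagons $J_a$ obtained by reflecting a well-chosen geodesic segment $I_a$ (cotangent to an explicit quadratic differential $\Psi_a$, running between two axes of symmetry) around the $D_3$-orbit. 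These hexagons are not boundaries of the $Q_a$, and they are not convex curves handed to you by Theorem~\ref{thm:exhaustiontri}; they require a separate round of extremal length estimates to show they avoid $B(o,t)$, lie in $B(o,3t)$, and have length $O(t)$. So your proof is complete for the pentagon but incomplete for the punctured triangle: the missing idea is the construction of a round family of separating curves when the convex exhaustion itself is badly eccentric.
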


Finally, we observe that $\teich(\pentagon)$ and $\teich(\triangletimes)$ have the following universal property:

\begin{thm} \label{thm:universal}
$\teich(\pentagon)$ and $\teich(\triangletimes)$ both embed isometrically in $\teich(\hexagon)$, the Teichm\"uller space of the hexagon, which in turn embeds isometrically in the Teichm\"uller space $\teich(\Sigma_g)$ of any closed surface of genus $g \geq 2$ (without marked points).
\end{thm}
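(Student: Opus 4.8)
The plan is to reduce the entire theorem to one principle, a mild extension of the doubling trick from the introduction, which I would isolate and prove as a lemma: \emph{if a finite group $G$ acts faithfully on a surface-with-marked-points $M$ so that each element is either orientation-preserving or an orientation-reversing involution with nonempty fixed $1$-manifold, and if the quotient $N = M/G$ is again a surface with marked points once the fixed points of the orientation-preserving elements and the endpoints of the reflection arcs are declared marked points of $N$, then ``lift a structure to its $G$-symmetric avatar'' defines an isometric embedding $\teich(N) \hookrightarrow \teich(M)$ with image the fixed-point set $\fix(G) \subset \teich(M)$.} The map is well defined and injective because one recovers the structure on $N$ by passing back to the quotient, and it lands in $\fix(G)$ by construction; the real content is that it is isometric. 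Given two structures in $\fix(G)$, the extremal (Teichm\"uller) quasiconformal map between them is unique in its homotopy class, and conjugating it by any element of $G$ yields a map of the same maximal dilatation in the same class, since pre- or post-composition with conformal or anticonformal homeomorphisms preserves the dilatation. By uniqueness the Teichm\"uller map is therefore $G$-equivariant, so it descends to a quasiconformal map on $N$ of equal dilatation; conversely the Teichm\"uller map on $N$ lifts to $M$ with the same dilatation, its Beltrami coefficient being pulled back isometrically off the finite branch locus and extending across it. Hence the two Teichm\"uller distances agree.

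Granting the lemma, the two embeddings into $\teich(\hexagon)$ follow from realizing $\pentagon$ and $\triangletimes$ as quotients of $\hexagon$. For the pentagon, some hexagon carries an orientation-reversing involution $\tau$ whose fixed arc joins the midpoints of two opposite edges — for instance $z \mapsto \bar z$ on the round disk with six boundary marked points placed symmetrically across the real diameter. This $\tau$ swaps the four remaining edges in two pairs and identifies the six marked points in three pairs, so the quotient is a disk with $3 + 2 = 5$ marked points (three images of vertices together with the two endpoints of the fixed arc): thus $\hexagon/\tau = \pentagon$, and the lemma gives $\teich(\pentagon) \cong \fix(\tau) \hookrightarrow \teich(\hexagon)$, consistent with $\dim \teich(\pentagon) = 2$ and $\dim \teich(\hexagon) = 3$. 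For the punctured triangle, $\hexagon$ is the double cover of $\triangletimes$ branched over its interior marked point: the branched double cover of a disk over one interior point is again a disk, and the three boundary marked points lift to six (realized, e.g., by $z \mapsto -z$ on the round disk with six marked points forming three antipodal pairs on the circle). The deck transformation $\iota$ is an orientation-preserving involution with a single interior fixed point, which is precisely the interior marked point of the quotient, so $\hexagon/\iota = \triangletimes$, and the lemma gives $\teich(\triangletimes) \cong \fix(\iota) \hookrightarrow \teich(\hexagon)$.

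For the second assertion I would run the chain $\teich(\hexagon) \hookrightarrow \teich(Y) \hookrightarrow \teich(\Sigma_2) \hookrightarrow \teich(\Sigma_g)$, where $Y$ is a pair of pants (a sphere with three boundary circles and no marked points). Doubling $\hexagon$ across its three alternating edges produces $Y$: the three doubled edges become the interior seams, the six vertices become smooth unmarked boundary points, and the swap involution $\rho$ has fixed set the three seams, with $Y/\rho = \hexagon$ (the six vertices being the endpoints of the reflection arcs); since $\dim \teich(Y) = 3 = \dim \teich(\hexagon)$ this embedding is in fact an isometry. Doubling $Y$ across its whole boundary gives a closed genus-$2$ surface $\Sigma_2$ with a reflection $\rho'$ satisfying $\Sigma_2/\rho' = Y$, hence $\teich(Y) \hookrightarrow \teich(\Sigma_2)$. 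Finally, for $g \geq 3$ take a regular unbranched cover $\Sigma_g \to \Sigma_2$ with deck group $\ZZ/(g-1)$, obtained from a surjection $\pi_1(\Sigma_2) \twoheadrightarrow \ZZ/(g-1)$ (the Euler characteristics match, $(g-1)\chi(\Sigma_2) = \chi(\Sigma_g)$); the deck group acts freely, and the lemma in its unbranched-covering form, where the dilatation is preserved because the covering is a local conformal isomorphism, gives $\teich(\Sigma_2) \hookrightarrow \teich(\Sigma_g)$, the identity when $g = 2$. Composing this chain with the two embeddings of the previous paragraph proves the theorem.

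The hard part is the clean statement and proof of the lemma. One must check that the fixed-point sets are nonempty and that ``lift'' and ``quotient'' are mutually inverse bijections between $\teich(N)$ and $\fix(G)$, and — the genuinely delicate point — that moving between a $G$-equivariant quasiconformal map and its quotient preserves the maximal dilatation; this is routine away from the fixed locus, and at the fixed points it uses removability of finite sets for quasiconformal maps, together with the fact that conjugating the Teichm\"uller quadratic differential by a symmetry produces a holomorphic quadratic differential of the same type (so that the conjugated extremal map is again extremal). Everything else is the topological bookkeeping behind the identifications $\hexagon/\tau = \pentagon$, $\hexagon/\iota = \triangletimes$, $Y/\rho = \hexagon$, $\Sigma_2/\rho' = Y$ and the covers $\Sigma_g \to \Sigma_2$, which is elementary but must be carried out with care so that marked points are matched correctly on the two sides.
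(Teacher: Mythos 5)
Your proposal is correct and follows essentially the same route as the paper: both reduce the theorem to the folklore covering-construction principle (the paper's Theorem \ref{thm:covering}, which you restate in its normal-covering form) and then exhibit the same orbifold coverings — hexagon modulo an edge-midpoint reflection giving the pentagon, hexagon modulo the central symmetry giving the punctured triangle, the pair of pants as the double of the hexagon, $\Sigma_2$ as the double of the pair of pants, and an unbranched cover $\Sigma_g \to \Sigma_2$. The only differences are cosmetic (the paper also offers an alternative factorization of $\Sigma_2 \to \hexagon$ through the hyperelliptic quotient, and proves the key lemma by lifting quadratic differentials rather than by equivariance of the extremal map).
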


Unlike Teichm\"uller disks, two distinct totally geodesic planes arising from such isometric embeddings can intersect in more than one point, hence along a geodesic. This is explained in Section \ref{sec:universal}.

\begin{acknowledgements}
This research was conducted during the 2016 Fields Undergraduate Summer Research Program. The authors thank the Fields Institute for providing this opportunity. MFB was partially supported by a postdoctoral research scholarship from the Fonds de recherche du Qu\'ebec -- Nature et technologies.
\end{acknowledgements}

\section{Preliminaries} \label{sec:prelim}

We start by recalling standard definitions and results from Teichm\"uller theory in their most general form. We then specialize to the case of the pentagon and the punctured triangle where many of these notions become quite simple.

\subsection{Quasiconformal maps}

A \emph{$K$-quasiconformal diffeomorphism} between bordered Riemann surfaces is a diffeomorphism whose derivative at all points distorts oriented angles by a factor at most $K$, or equivalently sends circles to ellipses of eccentricity at most $K$ and preserves orien\-ta\-tion \cite{AhlforsLectures}. A \emph{$K$-quasiconformal homeomorphism} is a limit of a sequence of $K_n$-quasiconformal diffeomorphisms such that $\liminf K_n \leq K.$ 

\subsection{Teichm\"uller metric}

The Teichm\"uller distance on $\teich(\Sigma,P)$ is defined as
$$
d([(X,f)],[(Y,g)]) = \inf \frac{1}{2} \log K 
$$
where the infimum is taken over all $K \geq 1$ such that there exists a $K$-quasiconformal homeomorphism $h:X \to Y$ with $g^{-1}\circ h \circ f$ isotopic to the identity rel $P$. 

From now on, we will suppress the marking $f: \Sigma \to X$ from our notation. All we need to remember is that any pair $X,Y \in \teich(\Sigma,P)$ comes with an isotopy class of homeomorphism $X \to Y$ rel the marked points.

\subsection{Quadratic differentials}
A \emph{quadratic differential} on $X \in \teich(\Sigma,P)$ is a tensor $q$ which takes the form $Q(z)dz^2$ in local coordinates for some function $Q$ which is holomorphic except possibly at the marked points, where it is allowed to have simple poles. Near a boundary point, if we take a coordinate chart which sends the boundary to the real axis, then it is required that the function $Q$ be real along the real axis. In other words, when evaluated at vectors tangent to the boundary of $X$, the tensor $q$ must return a value in $\RR\cup \{\infty\}$. 

Away from the singularities of $q$, the holomorphic $1$-form $\sqrt q$ can be integrated along arcs. On small enough simply-connected open sets this defines charts to $\CC$, called natural coordinates, in which $q$ becomes $dz^2$ \cite{Strebel}. These can be used to decompose $X$ into a union of Euclidean polygons with some sides identified via translations or central symmetries. The polygons can actually be chosen to be rectangles with sides parallel to the coordinate axes \cite[p.213]{Hubbard}, in which case we call the decomposition a rectangular structure.

\subsection{Teichm\"uller's theorem}

Teichm\"uller's theorem states that for any $X,Y \in \teich(\Sigma,P)$ with $X \neq Y$, the Teichm\"uller distance $d(X,Y)$ is equal to $\frac{1}{2}\log K$ for some $K$-quasicon\-for\-mal homeomorphism $h:X\to Y$ in the correct homotopy class. Moreover, there exist quadratic differentials on $X$ and $Y$ with respect to which $h$ has derivative $$\pm \left(\begin{smallmatrix} \sqrt{K} & 0 \\ 0 & 1/\sqrt{K} \end{smallmatrix}\right)$$ in natural coordinates away from singularities.

Conversely, a quasiconformal homeomorphism $h$ of the above form (called a Teichm\"uller homeomorphism) has minimal quasiconformal constant $K$ in its homotopy class. Furthermore, any $K$-quasiconformal homeomorphism $g$ homotopic to $h$ is equal to $h$ unless $\Sigma$ is an annulus or a torus and $P$ is empty, in which case $g$ can be equal to $h$ post-composed with a biholomorphism of $Y$ homotopic to the identity \cite{Teichmuller} \cite{Bers}. 

As a consequence, $\teich(\Sigma, P)$ is uniquely geodesic and the geodesic rays from a point $X$ are in one-to-one correspondence with the quadratic differentials of unit area on $X$. Although this seems to suggest that quadratic differentials are the tangent vectors to Teichm\"uller space, they are really the cotangent vectors. Tangent vectors can be represented as ellipse fields, and there is a natural bilinear pairing between tangent and cotangent vectors.

\subsection{Covering constructions} \label{sbsec:covering}

Let $f: (\Sigma, P) \to (\Pi, Q)$ be an orbifold covering. This means that for every $p \in \Sigma$, there are neighborhoods $U \ni p$ and $V \ni f(p)$, and embeddings $\varphi : U \to \RR^2$ and $\psi : V \to \RR^2$ such that $\psi \circ f \circ \varphi^{-1}$ is the restriction of a quotient map $\RR^2 \to \RR^2 / G$ where $G$ is a finite subgroup of $O(2)$.  The pullback map $\sigma_f : \teich(\Pi, Q) \to \teich(\Sigma, P)$ associates to any complex structure $\tau$ on $\Pi$ a complex structure $\sigma_f(\tau)$ on $\Sigma$ in such a way that $f$ is holomorphic or anti-holomorphic away from orbifold points with respect to those structures. 

A \emph{critical point} of $f$ is a point $c \in \Sigma$ such that $f$ is not injective in any neighborhood of $c$ with the following exception: if $c \in \Sigma^\circ$, $f(c) \in \partial \Pi$, and $f$ is $2$-to-$1$ in a neighborhood of $c$, then $c$ is not a critical point. In other words, interior points where $f$ acts as the quotient by a single reflection are not critical points. The set of critical points of $f$ is denoted $\crit(f)$.

The following result is folklore \cite[Section 6]{MMW}. The special case where the covering is assumed to be normal goes back to Teichm\"uller's original paper \cite[Section 28]{Teichmuller}.

\begin{thm} \label{thm:covering}
If $f: (\Sigma, P) \to (\Pi, Q)$ is an orbifold covering such that $$f^{-1}(Q) = P \cup \crit(f),$$ then the pullback map $\sigma_f$ is an isometric embedding.
\end{thm}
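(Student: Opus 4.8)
The plan is to prove the two inequalities
$d_{\teich(\Sigma,P)}(\sigma_f(\tau_1),\sigma_f(\tau_2)) \le d_{\teich(\Pi,Q)}(\tau_1,\tau_2)$ and the reverse, separately, obtaining the first by lifting extremal maps and the second by descending them. First I would make precise how $\sigma_f$ acts on marked surfaces and check that it respects the equivalence relation rel $P$. The hypothesis $f^{-1}(Q)=P\cup\crit(f)$ is exactly what guarantees that a homeomorphism of $\Pi$ fixing $Q$ up to isotopy rel $Q$ lifts to a homeomorphism of $\Sigma$ fixing $P$ up to isotopy rel $P$: all of the branching and non-injectivity of $f$ is concentrated over $Q$, so away from $Q$ the map $f$ is an honest (orbifold) covering and lifts of paths and isotopies behave. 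Granting this, to prove the non-expanding inequality, I would take the Teichm\"uller map $h:\Pi_{\tau_1}\to\Pi_{\tau_2}$ realizing $d_{\teich(\Pi,Q)}(\tau_1,\tau_2)$ and lift it through $f$ to a map $\tilde h:\Sigma_{\sigma_f(\tau_1)}\to\Sigma_{\sigma_f(\tau_2)}$. Since $f$ is holomorphic or anti-holomorphic away from orbifold points, the covering charts are conformal, so the pointwise dilatation of $\tilde h$ equals that of $h$; hence the quasiconformal constant $K(\tilde h)=K(h)$, and by the previous remark $\tilde h$ lies in the class rel $P$, which yields the inequality.

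For the reverse inequality I would first treat the case where $f$ is a normal covering with deck group $G$, so that $\Pi=\Sigma/G$ and each structure $\sigma_f(\tau)$ is $G$-invariant, with $G$ acting by conformal and anti-conformal automorphisms. Let $\tilde h:X_1\to X_2$ be the extremal map between $X_i:=\sigma_f(\tau_i)$, of dilatation $\tilde K$. For each $g\in G$ I would form the conjugate $g\circ\tilde h\circ g^{-1}$: conjugating by a conformal or anti-conformal automorphism leaves the dilatation equal to $\tilde K$ and preserves orientation, and, crucially, it is homotopic to $\tilde h$ rel $P$ because $g$ projects to the identity on $\Pi$. Uniqueness in Teichm\"uller's theorem then forces $g\circ\tilde h\circ g^{-1}=\tilde h$, so $\tilde h$ is $G$-equivariant and descends to a quasiconformal map $h:X_1/G=\Pi_{\tau_1}\to\Pi_{\tau_2}=X_2/G$ of the same dilatation $\tilde K$ in the class rel $Q$. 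This gives $d_{\teich(\Pi,Q)}(\tau_1,\tau_2)\le\tfrac12\log\tilde K = d_{\teich(\Sigma,P)}(\sigma_f(\tau_1),\sigma_f(\tau_2))$, and combined with the first step, equality.

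Finally I would reduce an arbitrary orbifold covering to the normal case. Since $\crit(f)\subseteq f^{-1}(Q)$, the branch locus of $f$ lies in $Q$, so I may choose $p:\hat\Sigma\to\Sigma$ with $f\circ p$ the normal (Galois) closure, branched again only over $Q$, and mark $\hat\Sigma$ so that both $p$ and $f\circ p$ satisfy the hypothesis of the theorem (a routine bookkeeping check on the marked points using $\hat P=(f\circ p)^{-1}(Q)$). As $\sigma_{f\circ p}=\sigma_p\circ\sigma_f$, the normal case gives $d_{\teich(\hat\Sigma,\hat P)}(\sigma_{f\circ p}(\tau_1),\sigma_{f\circ p}(\tau_2))=d_{\teich(\Pi,Q)}(\tau_1,\tau_2)$, while applying the lifting inequality of the first step to $p$ and then to $f$ gives $d_{\teich(\hat\Sigma,\hat P)}(\sigma_{f\circ p}(\tau_1),\sigma_{f\circ p}(\tau_2))\le d_{\teich(\Sigma,P)}(\sigma_f(\tau_1),\sigma_f(\tau_2))\le d_{\teich(\Pi,Q)}(\tau_1,\tau_2)$; the sandwich forces equality throughout. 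The step I expect to be most delicate is the homotopy claim in the normal case, namely that conjugating the extremal map by a deck transformation keeps it in the class rel $P$, since this is precisely where the hypothesis $f^{-1}(Q)=P\cup\crit(f)$ must be invoked; one must also dispatch the exceptional surfaces (the annulus and the torus without marked points) for which Teichm\"uller's uniqueness permits an extra conformal automorphism, by checking that this ambiguity does not affect the descended dilatation.
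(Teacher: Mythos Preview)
Your approach is correct but takes a genuinely longer route than the paper's. You prove two inequalities separately: the easy direction by lifting the extremal map as a quasiconformal map of the same dilatation, and the hard direction by an equivariance argument in the normal case (using uniqueness of the Teichm\"uller map to force the lifted extremal map to commute with the deck group), followed by a Galois-closure reduction for the general case. The paper's argument short-circuits all of this with a single observation: the lift of a Teichm\"uller homeomorphism is not merely $K$-quasiconformal, it is itself a Teichm\"uller homeomorphism. The point is that the initial and terminal quadratic differentials on $\Pi$ pull back under $f$ to quadratic differentials on $\Sigma$, and the hypothesis $f^{-1}(Q)=P\cup\crit(f)$ is exactly what ensures these pullbacks are admissible---a simple pole over a point of $Q$ pulls back either to a simple pole at a marked point in $P$ or to a singularity of nonnegative order at a critical point. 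Since the lift then has the Teichm\"uller normal form with respect to these pulled-back differentials, it is extremal in its homotopy class, and the distance upstairs equals $\tfrac12\log K$ on the nose. Your argument never exploits the quadratic-differential structure of the extremal map, which is why you have to work harder; on the other hand, your descent argument is the natural one if one only remembers that Teichm\"uller maps are unique rather than what they look like.
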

\begin{proof}
The condition $f^{-1}(Q) = P \cup \crit(f)$ implies that the lift of a Teichm\"uller homeomorphism by $f$ is again a Teichm\"uller homeomorphism. Indeed, simple poles of quadratic differentials pullback to either simple poles at marked points or to singularities of order $\geq 0$ at critical points. Since the quasiconformal dilatation of the Teichm\"uller homeomorphism upstairs is the same as the one downstairs, distance is preserved.
\end{proof}

An isometric embedding of Teichm\"uller spaces arising in this way is known as a covering construction. For example, there are orbifold coverings of degree $2$ from:
\begin{itemize}
\item the quadrilateral to the once-punctured bigon;
\item the annulus to the quadrilateral;
\item the annulus to the twice-punctured disk;
\item the torus to the four-times-punctured sphere;
\item the annulus with 2 marked points on the same boundary component to the pentagon;
\item the annulus with 1 marked point on each boundary component to the twice-punctured monogon.
\end{itemize}
All of these give rise to (surjective) isometries since the corresponding Teichm\"uller spaces have the same dimension.

\begin{figure}[htp]
\includegraphics[scale=.6]{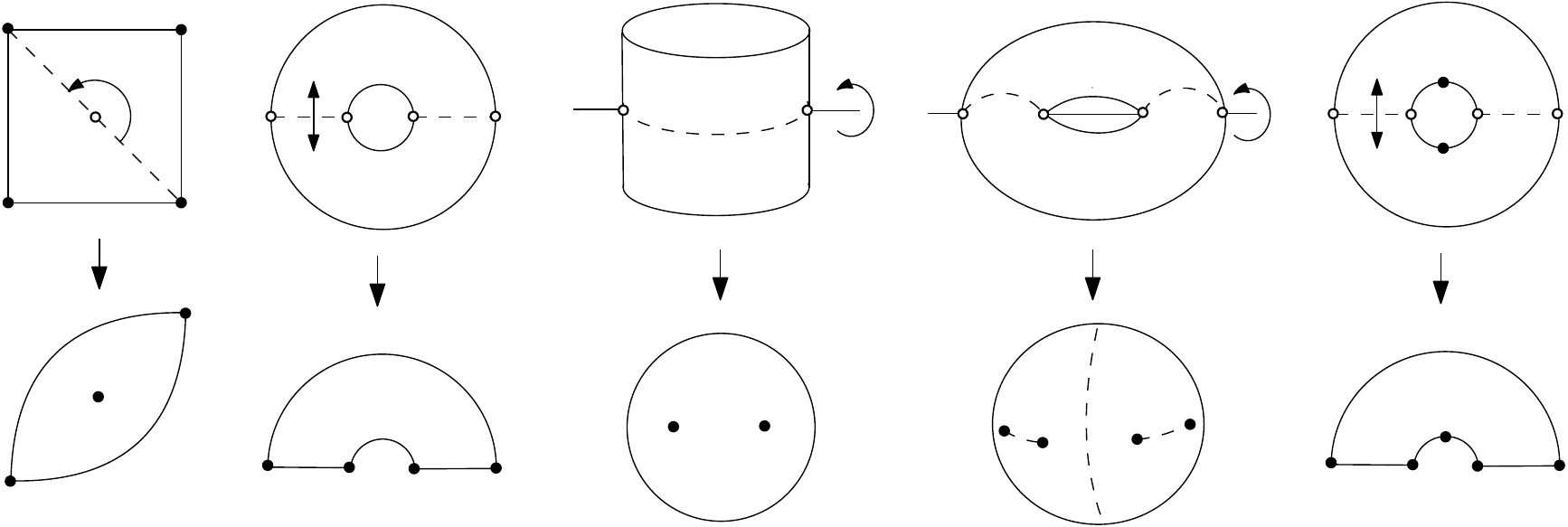}
\caption{Degree two orbifold coverings giving rise to isometries. The marked points are indicated in black and the critical points in white.}
\end{figure}

Another classical example comes from doubling. Given a surface $S=(\Pi, Q)$ with nonempty boundary, its double $R=(\Sigma,P)$ is the union of two copies of $S$, one with each possible orientation, with the boundaries glued via the identity map. The double $R$ comes with an orientation-reversing involution exchanging the two copies of $S$. The quotient by that involution is an orbifold covering $f:R \to S$ without critical points. Thus the Teichm\"uller space of any surface with boundary embeds isometrically in the Teichm\"uller space of some closed surface. The pullback map $\sigma_f:\teich(S) \to \teich(R)$ is simply the doubling construction, but done in the category of bordered Riemann surfaces. If $S$ has genus $g$, $b$ boundary components, $n$ interior marked points, and $m$ boundary marked points, then $R$ has genus $2g+b-1$ and $2n+m$ marked points. Assuming $S$ has negative Euler characteristic, then
$$
\dim \teich(R) = 6(2g+b-1)-6+2(2n+m) = 2(6g-6+3b+2n+m)= 2\dim \teich(S).
$$ 
The same equation holds when $S$ has non-negative Euler characteristic.

Theorem \ref{thm:universal} from the introduction is an easy consequence of Theorem \ref{thm:covering}: one only has to find appropriate orbifold coverings between the corresponding surfaces. The details are provided in Section \ref{sec:universal}.

\subsection{Measured foliations}

A \emph{measured foliation} on a compact surface $(\Sigma,P)$ is a foliation with isolated prong singularities ($1$-prong singularities are only allowed at the marked points) equipped with an invariant transverse measure \cite[p.56]{FLP}. The latter quantifies ``how many'' leaves of the foliation are crossing any given transverse arc. For example, if $q$ is a quadratic differential then its horizontal trajectories (maximal arcs along which $q>0$) form a measured foliation with transverse measure $|\im \sqrt{q}|$.

A \emph{multiarc} on $(\Sigma,P)$ is an embedded 1-dimensional submanifold $\alpha$ of $\Sigma \setminus P$ with boundary in $\partial \Sigma \setminus P$ such that
\begin{itemize}
\item no circle component of $\alpha$ bounds a disk or a once-punctured disk in $\Sigma \setminus P$;
\item no arc component of $\alpha$ bounds a disk with only $0$ or $1$ marked point on $\partial \Sigma$;
\item no two components of $\alpha$ are isotopic to each other in $\Sigma$ rel $P$.
\end{itemize}

The first two conditions define what it means for a simple closed curve or simple arc to be \emph{essential}. A \emph{weighted multiarc} is a multiarc together with a positive weight associated with each of its components. We generally consider (weighted) multiarcs only up to isotopy rel $P$. When we want to emphasize that we are talking about the isotopy class as opposed to a specific representative, we will write $[\alpha]$ for the isotopy class of $\alpha$.

Two measured foliations $F$ and $G$ are \emph{equivalent} if $i(\alpha,F)=i(\alpha,G)$ for every connected multiarc $\alpha$, where $i(\cdot,\cdot)$ is the geometric intersection number. The space $\mf(\Sigma,P)$ of equivalence classes of measured foliations on $(\Sigma,P)$ is given the weak topology by considering each measured foliation $F$ as a function on connected multiarcs via $\alpha \mapsto i(\alpha,F)$.  Every weighted multiarc $\beta$ can be enlarged to a measured foliation $F_\beta$ on $(\Sigma,P)$ such that $i(\alpha, \beta) = i(\alpha, F_\beta)$ for every connected multiarc $\alpha$. Thus the space of weighted multiarcs embeds inside the space of measured foliations.

For any $X \in \teich(\Sigma,P)$ and $F \in \mf(\Sigma,P)$, there exists a unique quadratic differential $q_F$ on $X$ whose horizontal foliation is equivalent to $F$. Moreover, the map $F \mapsto q_F$ is a homeomorphism. This is called the Hubbard--Masur (or heights) theorem \cite{HubbardMasur}. If $F$ is a weighted multiarc, then $q_F$ is called a \emph{Jenkins--Strebel differential}. 

The space of projective measured foliations $\pmf(\Sigma,P)$ is defined as the quotient of $\mf(\Sigma,P) \setminus \{0\}$ by positive rescaling. We will write $\overline{F}$ for the projective class of a measured foliation $F$. It follows from the Hubbard--Masur theorem that $\mf(\Sigma,P)$ is homeomorphic to $\RR^d$ and $\pmf(\Sigma,P)$ is homeo\-morphic to $\Sp^{d-1}$ where $d$ is the dimension of $\teich(\Sigma,P)$. 

\subsection{Extremal length}

There are three equivalent definitions of extremal length for a wei\-gh\-ted multiarc $\alpha = \sum h_i \cdot \alpha_i$ on a bordered Riemann surface $X \in \teich(\Sigma,P)$. The first one is
\begin{equation}
\el(\alpha, X) = \sup_\rho \frac{(\ell_\rho\,[\alpha])^2}{\area(\rho)}
\end{equation} 
where the supremum is over all Borel-measurable conformal metrics $\rho$ on $X$ and 
$$
\ell_\rho\,[\alpha] = \inf_{\gamma \in [\alpha]}  \sum_i h_i \int_{\gamma_i} \rho
$$
is the minimal weighted length of any rectifiable representative $\gamma = \sum h_i \cdot \gamma_i$ of $\alpha$. 

For example, the extremal length across a Euclidean rectangle is equal to its length divided by its height, and the extremal length around a Euclidean cylinder is its circumference divided by its height \cite[p.10]{AhlforsLectures}. Taking this as the definition of the extremal length of a rectangle or cylinder, the second definition of extremal length of a weighted multiarc is
\begin{equation}
\el(\alpha,X) = \inf \sum_i h_i^2  \el(R_i)
\end{equation}
where the infimum is taken over all collections $\{R_i\}$ of rectangles and cylinders embedded conformally and disjointly in $X$ with $R_i$ homotopic to $\alpha_i$.

The third definition of extremal length is 
\begin{equation}
\el(\alpha,X)=\area(q_\alpha)
\end{equation}
 where $q_\alpha$ is the quadratic differential on $X$ whose horizontal foliation is equivalent to $\alpha$. This definition extends to all measured foliations in view of the Hubbard--Masur theorem. 
 
The equivalence between the three definitions is explained in \cite{Dylan}. See also \cite{Ahlfors} for examples, properties and applications of extremal length.
 
\subsection{Kerckhoff's formula} 

Teichm\"uller distance can be expressed in terms of extremal lengths via Kerckhoff's formula \cite{Kerckhoff}:
\begin{equation} \label{eqn:kerck}
d(X,Y) = \sup_{\overline{F} \in \pmf(\Sigma,P)} \frac{1}{2}\log \frac{\el(F,Y)}{\el(F,X)}.
\end{equation} 
Moreover, the supremum is realized precisely when $F$ is the horizontal foliation of the initial quadratic differential for the Teichm\"uller homeomorphism $X \to Y$. Note that the supremand in (\ref{eqn:kerck}) does not depend on the choice of $F \in \overline{F}$. Indeed, extremal length scales quadratically in the sense that $\el(\lambda F, X) = \lambda^2 \el(F,X)$ for every $F \in \mf(\Sigma,P)$ and $\lambda > 0$. 

\section{Pentagons}

\subsection{Representation}
An element of $\teich(\pentagon)$ is (an equivalence class of) a bordered Riemann surface $X$ homeo\-morphic to the closed disk together with a 5-tuple $\vec{x}=(x_1,x_2,...,x_5)$ of distinct points appearing in counter-clockwise order along $\partial X$. Two pairs $(X,\vec{x})$ and $(Y,\vec{y})$ are equivalent if there is a conformal diffeomorphism $h:X \to Y$ such that $h(x_j)=y_j$ for $j=1,...,5$. We don't need a marking from a base topological surface here, since the labelling of the marked points provides the same information. For convenience, the index $j$ will be taken modulo $5$ so that $5+1=1$ and $1-1=5$.

By the Riemann Mapping Theorem, every element of $\teich(\pentagon)$ can be represented uniquely as the closed upper half-plane $\Hbar$ with 5-tuple $(x_1,x_2,\infty,-1,0)$, where $0<x_1<x_2$. In particular, we see that $\teich(\pentagon)$ is homeomorphic to $\RR^2$ via the coordinates 
$$
\left(\Hbar,(x_1,x_2,\infty,-1,0)\right) \mapsto (\log(x_1),\log(x_2-x_1)).
$$ 
One could also represent elements of $\teich(\pentagon)$ with the closed unit disk, but we found the upper half-plane to be more convenient.

From the point of view of hyperbolic geometry, $\teich(\pentagon)$ is the space of ideal pentagons in $\HH^2$ with labelled vertices up to isometry, or the space of right-angled pentagons with labelled vertices up to isometry. There are other equivalent definitions. For example, $\teich(\pentagon)$ is the space of Euclidean pentagons with 5 prescribed angles up to similarity. 

\subsection{The five axes of symmetry}
The dihedral group $D_5$ acts on $\teich(\pentagon)$ by permuting the labels of the marked points and reversing orientation when the permutation does so. This action is isometric with respect to the Teichm\"uller metric. There are $5$ special geodesics in $\teich(\pentagon)$ given by the loci of fixed points of the 5 reflections in $D_5$. For example, the permutation $(25)(34)$ fixes all pentagons $(X,\vec{x})$ which admit an anti-conformal involution $h$ such that $h(x_1)=x_1$, $h(x_2)=x_5$ and $h(x_3)=x_4$. This locus is a geodesic. Indeed, the quotient of $\pentagon$ by any of these reflections is an orbifold covering onto a quadrilateral. Hence it gives rise to an isometric embedding of the Teichm\"uller space of quadrilaterals into $\teich(\pentagon)$. But the Teichm\"uller space of quadrilaterals is isometric to the real line by Gr\"otzsch's theorem (a special case of Teichm\"uller's theorem).  By definition, a geodesic is an isometric embedding of the real line.

Let us denote by $\sigma_j$ the reflection in $D_5$ which fixes the vertex labelled $j$. If $(X,\vec{x})$ is realized as the upper half-plane with marked points $(x_1,x_2,\infty,-1,0)$, then the locus $\gamma_1=\fix(\sigma_1)$ is given by the equation $ x_2+1 = (x_1+1)^2$. The reason for this is that every anti-conformal involution of $\Hbar$ is either an inversion in a circle centered on the real line or a reflection in a vertical line. Now, the the anti-conformal involution realizing the permutation $\sigma_1$ on $(x_1,x_2,\infty,-1,0)$ must fix $x_1$, swap $x_2$ and $0$, and swap $\infty$ and $-1$. The involution is therefore equal to the inversion in the circle centered at $-1$ passing through $x_1$. The above equation is just the condition that $|x_2 - (-1)||0 - (-1)| = |x_1 - (-1)|^2$. Similarly, 
\begin{itemize}
\item $\gamma_2=\fix(\sigma_2)$ has equation $x_1(x_1+1) =(x_2-x_1)^2$;
\item $\gamma_3=\fix(\sigma_3)$ has equation $x_2 = x_1+1$;
\item $\gamma_4=\fix(\sigma_4)$ has equation $x_1 x_2 = 1$ subject to $x_1<1$;
\item $\gamma_5=\fix(\sigma_5)$ has equation $(x_2-x_1)(x_2+1) = x_2^2$.
\end{itemize}

Let $\phi = \left(1+\sqrt{5}\right)/2$ be the golden ratio. We leave it to the reader to check that $x_1 = 1/\phi$ and $x_2 = \phi$ satisfy all of the above equations. In other words, the regular pentagon (which is fixed by all of $D_5$) is conformally equivalent to the upper half-plane with marked points $(1/\phi,\phi,\infty, -1, 0)$. We call this point the \emph{origin} of $\teich(\pentagon)$. The geodesics $\gamma_j$ all intersect at the origin and this is the only intersection point of any two of them. 

\begin{figure}[htp]
\includegraphics[scale=.7]{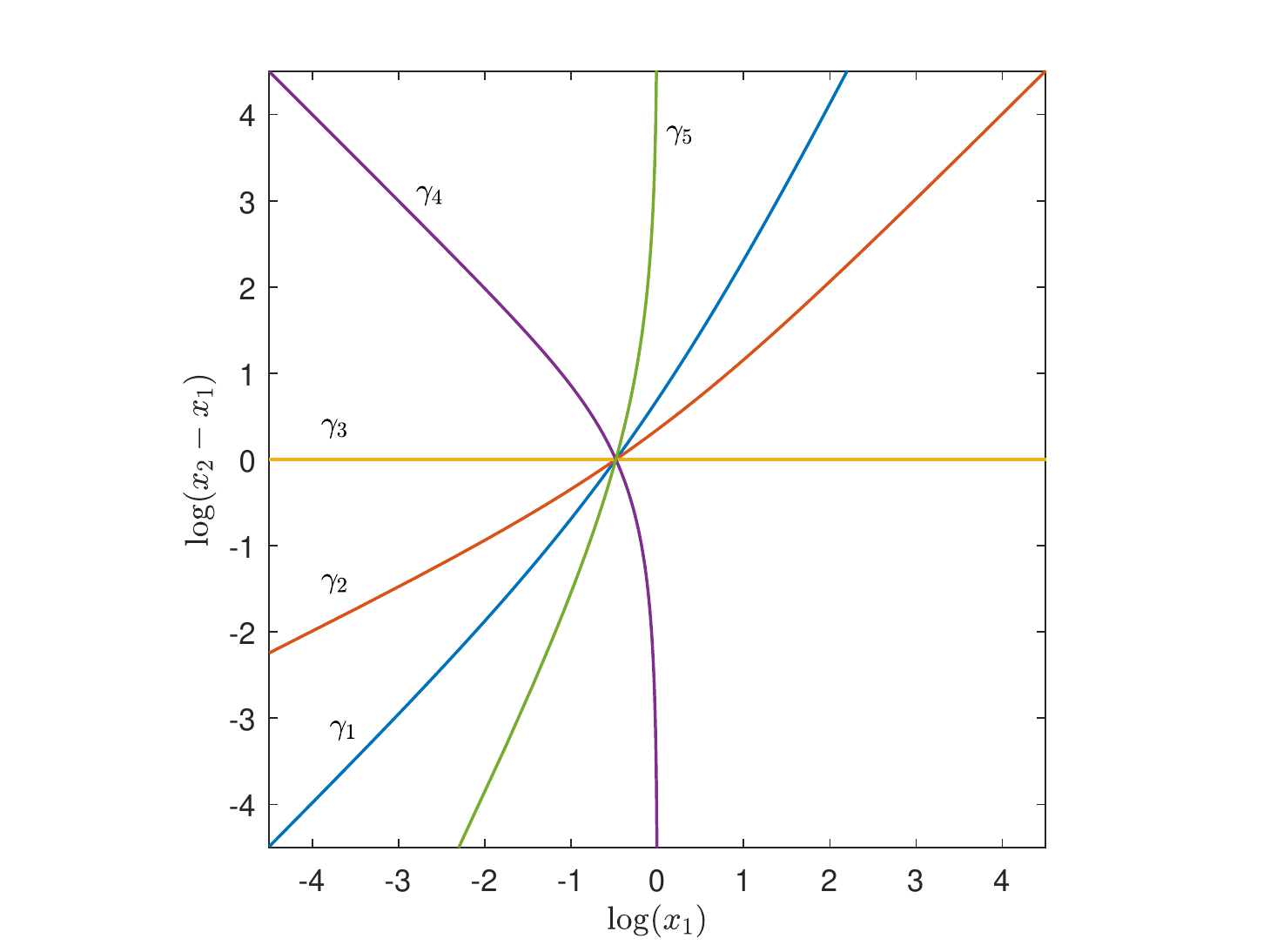}
\caption{The five axes of symmetry of $\teich(\pentagon)$ plotted in $\log$-coordinates.}
\end{figure}

\subsection{Measured foliations}

Measured foliations on the pentagon are of the simplest possible kind.

\begin{lem} \label{lem:multiarc}
Every measured foliation of $\pentagon$ is a weighted multiarc.
\end{lem}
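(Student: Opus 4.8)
The plan is to represent $F$ by the quadratic differential produced by the Hubbard--Masur theorem and to read off its trajectory structure. Fix any $X\in\teich(\pentagon)$, realized as $\Hbar$ with marked points $(x_1,x_2,\infty,-1,0)$ with $0<x_1<x_2$; then $F$ is the horizontal foliation of a unique quadratic differential $q=q_F$ on $X$. Since $q$ is holomorphic on $\HH$, real on $\RR$, and has at most simple poles at the five marked points, Schwarz reflection extends it to $\what{\CC}$ as $R(z)\,dz^2$, where $R$ is a rational function with real coefficients whose poles, of order at most one, lie among the five marked points. As $q$ may carry at most a simple pole at $\infty$, the numerator of $R$ has degree at most one, so
\[
q = \frac{(az+b)\,dz^2}{z\,(z+1)\,(z-x_1)\,(z-x_2)}, \qquad a,b\in\RR .
\]
(This is where the fact that $\teich(\pentagon)$ is two-dimensional enters.) The crucial observation is that the only possible zero of $q$ lies at $z=-b/a\in\RR$ (with no zero at all when $a=0$, and $\infty$ never a zero). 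Thus $q$ has no singularity in the interior $\HH$: its horizontal foliation is a genuinely non-singular foliation of the open disk, and all of its prong singularities --- simple poles at some of the marked points, plus at most one simple zero --- lie on $\partial X$.

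Granting this, the rest is routine. There is no closed horizontal leaf, since one would bound a disk $D\subset\HH$ on which the foliation is non-singular and tangent to $\partial D$, contradicting the index identity $0=\chi(D)=1$. There is no recurrent leaf either: by Kaplan's theorem on non-singular foliations of the plane --- or by applying Strebel's structure theorem to the double, a five-punctured sphere carrying a quadratic differential with at most five simple poles and at most one simple zero --- every leaf is a properly embedded arc and every critical trajectory is finite. Hence the finitely many horizontal separatrices issuing from the boundary singularities cut $X$ into finitely many rectangles, each swept by a family of parallel horizontal cross-cuts. Discarding the (at most finitely many) rectangles whose core loops around a single marked point --- these do not affect $i(\gamma,\cdot)$ for any connected multiarc $\gamma$ --- the remaining rectangles have essential cores, and assigning to each isotopy class of essential arc $\alpha_i$ the combined width $h_i$ of the rectangles with that core produces a weighted multiarc $\beta=\sum_i h_i\,\alpha_i$ satisfying $i(\gamma,\beta)=i(\gamma,F)$ for every connected multiarc $\gamma$, i.e.\ $F=F_\beta$. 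I expect the most delicate step to be exactly this last one: ruling out recurrent trajectories and tracking how the rectangles close up along the boundary arcs of $\pentagon$, together with the case analysis when $-b/a$ lands on a marked point or when a boundary arc of $\pentagon$ is itself a leaf.

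There is also a more combinatorial route that avoids the structure theory. One checks that $\pentagon$ has exactly five isotopy classes of essential arcs $\delta_1,\dots,\delta_5$ (cyclically permuted by the $D_5$-action, with $\delta_k$ cutting off the consecutive pair $x_k,x_{k+1}$), and that among them precisely $\delta_k$ and $\delta_{k+2}$ are disjointly realizable, no three being pairwise disjoint. Hence the projective classes of weighted multiarcs form a finite union of five points and five closed arcs in $\pmf(\pentagon)\cong\Sp^1$, in particular a \emph{closed} subset. Since weighted multiarcs are dense in $\mf(\pentagon)$ (a standard fact), this closed set equals all of $\Sp^1$, so every nonzero measured foliation is projectively --- hence genuinely --- a weighted multiarc, the zero foliation being the empty one. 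Here the points requiring care are the density statement and the enumeration of the essential arcs together with their intersection pattern.
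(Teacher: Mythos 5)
Your two routes are both genuinely different from the paper's, which is a short, self-contained topological argument making no use of quadratic differentials at all: if some leaf $\lambda$ of $F$ were recurrent, one takes a short transverse arc $\alpha$ to which $\lambda$ returns, follows $\lambda$ to its first return, doubles the resulting disk across its boundary, and obtains a measured foliation on the sphere with at most two $1$-prong singularities --- contradicting the Euler--Poincar\'e formula, which forces at least four. No Hubbard--Masur, no structure theory, no density of rational foliations.

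The genuine gap in your primary route sits exactly at the step you yourself flag as delicate: ruling out recurrence. Your explicit formula for $q$ and the conclusion that its only zero is real (so the horizontal foliation is non-singular on $\HH$) are correct. Kaplan's theorem then gives that every leaf is properly embedded \emph{in the open half-plane}, but this alone does not give finite $q$-length or well-defined endpoints on $\partial X$ (a properly embedded line in an open disk can still accumulate on a boundary arc), which is what you need to cut $X$ into finitely many rectangles. Worse, the alternative justification --- ``applying Strebel's structure theorem to the double, a five-punctured sphere carrying a quadratic differential with at most five simple poles and at most one simple zero'' --- is insufficient as stated: that singularity data describes the generic stratum on the five-punctured sphere, where minimal foliations with dense leaves (e.g.\ stable foliations of pseudo-Anosov maps) certainly exist. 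What actually excludes recurrence is not the pole/zero count but the planarity of $\pentagon$ (equivalently, the conjugation symmetry of the double together with the zero lying on the fixed locus): a recurrent trajectory would have to sweep out a spiral or minimal domain, and no such domain fits inside a disk. The paper's first-return-plus-doubling argument is precisely the missing lemma here, so your route is salvageable but incomplete at its crucial point. Your second, combinatorial route is viable in principle, and the enumeration of the five essential arcs and their disjointness pattern is right; however, it outsources all the content to the ``standard'' density of weighted multiarcs in $\mf(\pentagon)$, which for a bordered surface with marked points is a statement of essentially the same depth as the lemma itself (normally proved via train tracks or Dehn--Thurston-type coordinates), so as written it is more a reduction than a proof.
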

\begin{proof} Let $F$ be a measured foliation on $\pentagon$. It suffices to prove that every leaf of $F$ is a proper arc. Suppose not, i.e., let $\lambda$ be a leaf of $F$ which is recurrent to some part of $\pentagon$. Let $\alpha$ be a short arc transverse to $F$ to which $\lambda$ returns. Starting from $\alpha$, follow $\lambda$ until it first returns to $\alpha$. The region enclosed by these arcs is a disk. Doubling this disk across the boundary, we get a measured foliation on the sphere with at most two 1-prong singularities (where $\alpha$ and $\lambda$ meet). But a measured foliation on the sphere must have at least four 1-prong singularities by the Euler--Poincar\'e formula \cite[p.58]{FLP}.  
\end{proof}

A multiarc on $\pentagon$ can have either $1$ or $2$ components. Thus the space $\pmf(\pentagon)$ has the structure of a graph whose vertices correspond to essential arcs and whose edges correspond to pairs of disjoint essential arcs (the position of a point along an edge indicates the relative weights on the corresponding arcs). Since there are 5 essential arcs in $\pentagon$ and each arc is disjoint from exactly two other essential arcs, $\pmf(\pentagon)$ is isomorphic to a pentagon. We use the following notation for the essential arcs in $\pentagon$. For each $j \in \{1,2,3,4,5\}$, the arc $\alpha_j$ is the one which separates the vertex labelled $j$ and its two neighbors in $\partial\pentagon$ from the other two vertices (see Figure \ref{fig:pmf}). Equivalently, $\alpha_j$ is the isotopy class of essential arc which is sent to itself by $\sigma_j$.

\begin{figure}[htp] 
\includegraphics[scale=.8]{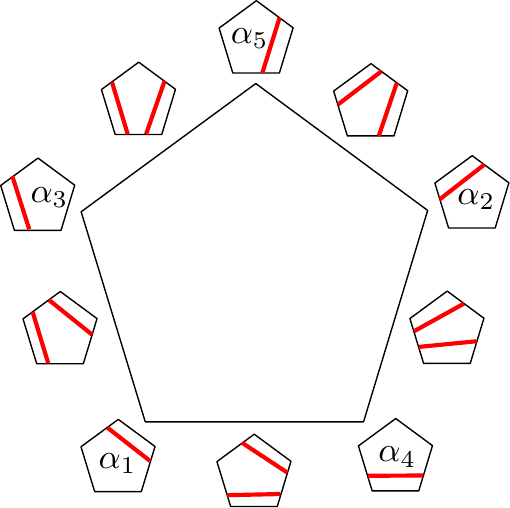}
\caption{$\pmf(\pentagon) \cong \pentagon$. In each small pentagon, the bottom left corner is the vertex labelled $1$ and the remaining vertices are labelled in counterclockwise order.} \label{fig:pmf}
\end{figure}

\subsection{Quadratic differentials}

Similarly, quadratic differentials and the rectangular structures they induce on the pentagon are easy to describe geometrically.

\begin{lem} \label{lem:lshape}
Every rectangular structure on $\pentagon$ is a (possibly degenerate) $L$-shape.
\end{lem}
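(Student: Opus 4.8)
The plan is to analyze the rectangular structure induced by an arbitrary quadratic differential $q$ on a pentagon $X \in \teich(\pentagon)$, and show that the underlying Euclidean polygon must be an $L$-shape (a rectangle with one corner rectangle removed), allowing the removed piece to be empty (degenerate case, giving a full rectangle) or to fill up to a boundary segment.

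First I would recall what a rectangular structure is: by the general theory cited earlier (\cite[p.213]{Hubbard}), $X \setminus \crit(q)$ decomposes into rectangles with sides parallel to the coordinate axes of the natural coordinates $\int \sqrt{q}$, with sides identified via translations or central symmetries. But on the pentagon the topology is extremely restrictive. Since $X$ is a disk with $5$ boundary marked points and no interior marked points, and since along $\partial X$ the differential $q$ is real (so the boundary consists of horizontal and vertical segments in natural coordinates), the whole surface $X$ is realized as a single Euclidean polygon $\mathcal{P}$ with horizontal and vertical sides, with \emph{no} side identifications at all: indeed any identification would either create genus or a puncture or a boundary self-gluing, none of which is possible for a disk with boundary marked points. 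So the rectangular structure is literally an embedding of $X$ as a rectilinear polygon in $\CC$.

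Next I would count corners. The only singularities $q$ may have are simple poles, and these are only allowed at marked points; at a marked point the cone angle is $\pi/2$ (for a simple pole on the boundary) or $\pi$ (a regular boundary point, i.e.\ a ``fake'' vertex where the boundary goes straight). At interior points $q$ is holomorphic and nonvanishing except possibly zeros, which on a disk with only $5$ boundary marked points and genus $0$ are forbidden by a Gauss--Bonnet / Euler--Poincar\'e count: the sum of $(1 - \text{order}/2 - 1)$ type contributions must match $\chi$, forcing no interior zeros and exactly enough boundary curvature. Concretely, for a rectilinear polygon that is topologically a disk, the number of convex ($\pi/2$) corners minus the number of reflex ($3\pi/2$) corners equals $4$. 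The five marked points $x_1,\dots,x_5$ are the only places the boundary is allowed to turn, and each contributes either a convex corner (interior angle $\pi/2$) or a straight point (interior angle $\pi$) — a simple pole gives angle $\pi/2$, a regular point gives angle $\pi$; reflex corners at marked points would require a cone angle $3\pi/2$, impossible for a simple pole. Hence among the five marked points we have $c$ convex corners and $5-c$ straight points, and any reflex corners must occur at \emph{non-marked} boundary points — but $q$ is holomorphic and real there, so the boundary is locally straight, contradiction. Thus there are \emph{no} reflex corners, so $c = 4$: exactly four of the marked points are genuine right-angle corners and one lies in the middle of a side. A rectilinear polygon with exactly four convex corners and no reflex corners is exactly a rectangle; allowing one of the five marked points to sit on a side (or at a corner, the degenerate case) gives precisely the ``$L$-shape'' described — wait, I need to reconcile this with the $L$-shape claim.

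The resolution, and the step I expect to be the main obstacle, is bookkeeping the degenerate cases correctly so that the statement ``$L$-shape'' is literally true. An $L$-shape is a rectilinear hexagon with five convex corners and one reflex corner; generically a quadratic differential on the pentagon gives such a hexagon with its six corners being the five marked points plus \emph{one extra corner that is not a marked point} — but that is impossible by the no-reflex-at-nonmarked-points argument above unless the reflex corner \emph{is} a marked point after all. I would therefore revisit the angle analysis: a simple pole contributes total angle $\pi$ at a \emph{boundary} marked point only in the doubled picture; on the bordered surface the boundary interior angle at a simple pole is $\pi/2$, at a regular point $\pi$, and crucially the Jenkins--Strebel / general theory permits the natural-coordinate image of a marked point to be a reflex ($3\pi/2$) corner when $q$ has a \emph{zero} of order $1$ at that boundary point. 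So the correct trichotomy is: each $x_j$ maps to a corner of interior angle $\pi/2$ (pole), $\pi$ (regular), or $3\pi/2$ (simple zero), non-marked boundary points are always straight, and the convex-minus-reflex $=4$ constraint with five slots forces exactly the profiles $(\pi/2,\pi/2,\pi/2,\pi/2,\pi)$ — a rectangle with a marked point on a side — or $(\pi/2,\pi/2,\pi/2,\pi/2,\pi/2,\ldots)$ no, with six... Let me instead just carry out the count honestly in the proof: with five marked points, writing $a$ for the number with angle $\pi/2$, $b$ with angle $\pi$, $e$ with angle $3\pi/2$, we need $a - e = 4$ and $a + b + e = 5$, giving $(a,b,e) \in \{(4,1,0),(5,0,1)\}$. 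The first is a rectangle with a vertex subdividing one side (a degenerate $L$-shape); the second is a genuine $L$-shape: five right-angle vertices and one reflex vertex, i.e.\ exactly an $L$-shaped hexagon. I would then note the positions of the labelled vertices around this hexagon are further constrained (the reflex vertex and the two ``ends'' of the $L$ alternate appropriately), but for the lemma as stated it suffices that the shape is an $L$, possibly degenerating to a rectangle, which is what the two cases give. The main work, then, is justifying the angle trichotomy at marked points from the simple-pole condition and the reality of $q$ on the boundary, and then the elementary combinatorial classification of rectilinear disk-polygons with the resulting corner data.
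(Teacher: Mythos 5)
Your overall strategy (realize the rectangular structure as an embedded rectilinear polygon and classify it by its corner angles) is close in spirit to the paper's proof, but the execution breaks at the decisive step. The paper doubles $q$ across the boundary to a quadratic differential on $\CHAT$ whose divisor has degree $-4$; since poles are simple and confined to the five marked points, either $q$ has four simple poles and no zeros (a rectangle with the fifth marked point interior to a side, the degenerate $L$-shape), or five simple poles and exactly one simple zero. Symmetry of the doubled differential about the real axis forces that zero onto the boundary, and the natural coordinate then produces a hexagon with five corners of angle $\pi/2$ (the poles) and one corner of angle $3\pi/2$ (the zero), i.e., an $L$.

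The gap in your argument is the claim that at a non-marked boundary point ``$q$ is holomorphic and real there, so the boundary is locally straight.'' That is false exactly when $q$ vanishes there: near a boundary simple zero the natural coordinate behaves like $z \mapsto z^{3/2}$, so the boundary turns through an interior angle of $3\pi/2$ at a point that is not marked. Reality of $q$ on $\partial X$ only forces zeros off the boundary to occur in conjugate pairs in the double; since the total zero degree available is $5-4=1$, the single zero (when it exists) must lie \emph{on} the boundary at a non-marked point. Your corner count $a+b+e=5$, taken over marked points only, then actually forces $(a,b,e)=(4,1,0)$ --- your second solution $(5,0,1)$ violates $a+b+e=5$ --- so as written your argument proves that every rectangular structure is a rectangle, which is false (the differentials $\Phi_a$ constructed later in the paper are genuine $L$-shapes). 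The fix is precisely to allow the sixth, reflex corner at a non-marked boundary point where $q$ has its simple zero; the divisor-degree count on the double is the clean way to see that this is the only other possibility.
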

\begin{proof}
Let $q$ be a quadratic differential on $\Hbar$ with marked points at $x_1$, $x_2$, $\infty$, $-1$ and $0$. Recall that $q$ has at most simple poles at the marked points. Since $q$ is real along $\RR$, it extends to a quadratic differential on $\CHAT$ which is symmetric about the real axis. By the Euler--Poincar\'e formula (or by considering the quadratic differential $dz^2$, which has a pole of order $4$ at infinity), the degree of the divisor of $q$ is $-4$. 

If $q$ has exactly 4 simple poles, then it has no other singularities and the corresponding rectangular structure is a rectangle. This is because the sign of $q$ along $\RR$ changes exactly at the poles, so the image of $\Hbar$ under the natural coordinate for $q$ is a polygon with 4 sides which are alternatingly horizontal and vertical. Note that the rectangle has one marked point along one of its sides. We call this a degenerate $L$-shape. 

Otherwise, $q$ has a simple pole at each of the $5$ marked points as well as $1$ simple zero. Since the zeros of $q$ are symmetric about the real axis, its only zero must be on the real line. Therefore the natural coordinate $z \mapsto \int_i^z \sqrt{q}$ is globally defined on $\Hbar$. Its image is an immersed polygon with sides parallel to the axes, $5$ corners of angle $\pi/2$ (corresponding to the poles) and $1$ corner of angle $3\pi /2$ (corresponding to the zero). Any such polygon is actually embedded, and looks like the letter $L$ up to reflections in the coordinate axes.  
\end{proof}

\begin{figure}[htp] 
\includegraphics[scale=.5]{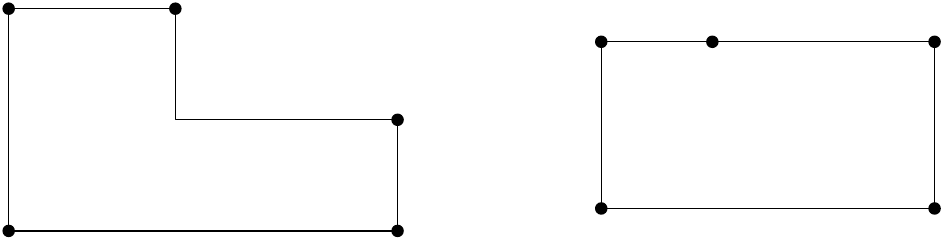}
\caption{An $L$-shape and a degenerate $L$-shape.} \label{fig:lshapes}
\end{figure}

\subsection{Parametrizing the axes}

We parametrize each of the 5 geodesics $\gamma_j$ by arclength with $\gamma_j(0)$ equal to the origin. It remains to orient them. Since $\gamma_j$ is fixed pointwise by the reflection $\sigma_j$, the horizontal and vertical foliations for its defining quadratic differential are also fixed by $\sigma_j$. Up to scaling, there are only two measured foliations invariant by $\sigma_j$, namely $\alpha_j$ and $\alpha_{j-1}+\alpha_{j+1}$. We orient $\gamma_j$ by declaring that $\alpha_{j-1}+\alpha_{j+1}$ is the horizontal foliation and  $\alpha_j$ is the vertical foliation for the quadratic differential. This way, $\alpha_j$ gets pinched along $\gamma_j$ in the sense that $\el(\alpha_j,\gamma_j(t)) \to 0$ as $t \to +\infty$.

The origin splits the $5$ geodesics $\gamma_j$ into 10 rays $\gamma_j^\pm$, and their order of appearance around the origin is the same as the order of appearance of their vertical foliation in $\pmf(\pentagon)$. This implies that $\gamma_1^+$ is followed by $\gamma_2^-$, then $\gamma_3^+$, and so on (see Figure \ref{fig:rays}). In other words, the geodesics appear in sequential order around the origin but with alternating orientation. 

\begin{figure}[htp] 
\includegraphics[scale=.9]{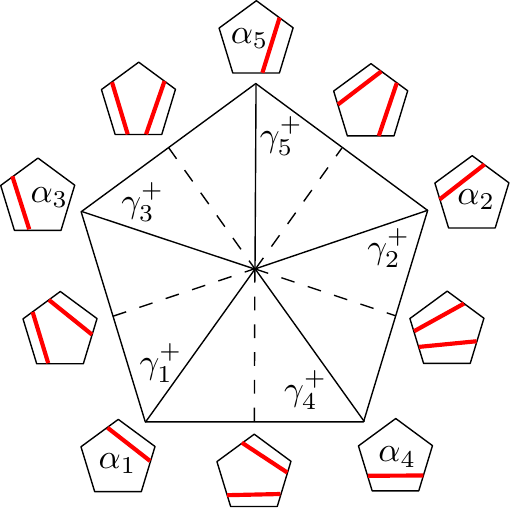}
\caption{The cyclic order and orientation of the axes of symmetry in $\teich(\pentagon)$. The backward direction of each axis is dashed. $\pmf(\pentagon)$ is drawn to indicate the projective class of the vertical foliation for each ray. This is intended only as a visual guide; it does not correspond to a compactification of $\teich(\pentagon)$.} \label{fig:rays}
\end{figure}

\subsection{Half-planes}

We define an \emph{open half-plane}  in $\teich(\pentagon)$ to be either connected components of the complement of a geodesic. A \emph{closed half-plane} is the closure of an open half-plane, i.e., an open half-plane together with its defining geodesic. 

\begin{lem}
Closed half-planes are convex.
\end{lem}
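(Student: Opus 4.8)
The plan is to show that any geodesic $\gamma_j$ (one of the five axes of symmetry) bounds two convex half-planes, and then to argue that every geodesic in $\teich(\pentagon)$ is carried onto such an axis by an isometry, so that convexity of one half-plane implies convexity of all. The first step is to reduce to the axes: by Teichm\"uller's theorem every geodesic segment is a Teichm\"uller geodesic determined by a quadratic differential, and using the $D_5$-action together with the fact (from Lemma \ref{lem:lshape}) that rectangular structures on the pentagon are $L$-shapes, one sees that the relevant geodesics through a point are modeled on the $\gamma_j$. Actually, a cleaner reduction: it suffices to prove that any bi-infinite geodesic $\ell$ in $\teich(\pentagon)$ separates $\teich(\pentagon) \cong \RR^2$ into two pieces each of which is convex, and since $\teich(\pentagon)$ is uniquely geodesic and homeomorphic to the plane, $\ell$ does separate it into two simply connected pieces; the content is that neither piece contains a geodesic segment with both endpoints in it but some interior point on the other side. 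Equivalently, no geodesic crosses $\ell$ twice.

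So the heart of the matter is: \emph{two distinct geodesics in $\teich(\pentagon)$ intersect in at most one point when they cross transversally they do not cross again.} In a uniquely geodesic space this is automatic for geodesics sharing an endpoint, but here we want it for a bi-infinite geodesic and a finite segment. I would prove this using Kerckhoff's formula (\ref{eqn:kerck}) and the structure of $\pmf(\pentagon)$ as a pentagon-shaped graph with five vertices $\alpha_j$. The key input is that along the axis $\gamma_j$, parametrized so that $\alpha_j$ is pinched, the extremal length function $t \mapsto \el(\alpha_j, \gamma_j(t))$ is \emph{monotone} (strictly decreasing), and more generally that extremal length of any fixed measured foliation is a convex function along Teichm\"uller geodesics — this last fact is standard (Kerckhoff, and Gardiner--Masur) and I would cite it. Given convexity of $F \mapsto \el(F, \cdot)$ along every geodesic, the set $\{X : \el(F,X) \le c\}$ is convex for each $F$ and $c$; the complement of an axis $\gamma_j$ is, I claim, expressible as a sublevel set of this type — namely the two components of $\teich(\pentagon) \setminus \gamma_j$ are $\{X : \el(\alpha_{j-1},X) < \el(\alpha_{j+1},X)\}$ and its mirror, or can be cut out by comparing $\el(\alpha_j, X)$ against the function realizing the sup in Kerckhoff's formula. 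Then convexity of the half-plane follows from convexity of $\log\bigl(\el(\alpha_{j-1},\cdot)/\el(\alpha_{j+1},\cdot)\bigr)$ — no, that difference of convex functions need not be convex, so instead I would use that the half-plane is $\bigcup_{t}$ of balls or, better, realize $\gamma_j$ itself as $\fix(\sigma_j)$ and note $\sigma_j$ is an isometry: then the open half-plane $H$ satisfies $\sigma_j(H) = H'$ the other half, and for $X, Y \in H$ the geodesic $[X,Y]$ cannot meet $\gamma_j$, for if it met $\gamma_j$ at a point $p$ then applying $\sigma_j$ gives another geodesic $[\sigma_j X, \sigma_j Y] \subset H'$ through $p$; since $p, \sigma_j X$ and $p, X$... — this symmetry argument needs the uniqueness of geodesics and a count of how $[X,Y]$ can intersect its reflection.

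Let me settle on the approach I actually expect to work. Use that extremal length is a convex function along Teichm\"uller geodesics (a theorem I will cite). For the axis $\gamma_j = \fix(\sigma_j)$, I claim the open half-plane $H_j^+$ is exactly $\{X \in \teich(\pentagon) : \el(\alpha_{j-1}, X) < \el(\alpha_{j+1}, X)\}$ and $H_j^- $ is the reverse inequality, with equality holding precisely on $\gamma_j$ because $\sigma_j$ swaps $\alpha_{j-1}$ and $\alpha_{j+1}$ while fixing each point of $\gamma_j$, and because these two inequality-regions are disjoint, open, $\sigma_j$-swapped, and cover the complement of their common boundary, which must therefore be $\gamma_j$ (here I use that $\teich(\pentagon)\setminus\gamma_j$ has exactly two components). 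Granting this description, I still need each region to be convex, and a difference of convex functions is not convex, so I instead argue via the reflection $\sigma_j$ directly: take $X, Y$ in the same component, say $H_j^+$, suppose the geodesic segment $[X,Y]$ meets $\gamma_j$ at a point $p$. Then $\sigma_j[X,Y] = [\sigma_j X, \sigma_j Y]$ also passes through $p = \sigma_j(p)$, and $\sigma_j X, \sigma_j Y \in H_j^-$. Now consider the full bi-infinite geodesic $\ell$ extending $[X,Y]$: it meets $\gamma_j$ at $p$, and $\sigma_j \ell$ also meets $\gamma_j$ at $p$. If $\ell \ne \sigma_j\ell$, these are two distinct geodesics through $p$, hence they cross transversally at $p$ and, being lines in the plane $\teich(\pentagon)$, $\ell \setminus \{p\}$ has its two rays on opposite sides of $\sigma_j \ell$; tracking which side of $\gamma_j$ the rays lie on (using that $\el(\alpha_{j-1},\cdot) - \el(\alpha_{j+1},\cdot)$ changes sign only where $\ell$ meets $\gamma_j$, and is convex hence changes sign at most twice) gives that $X$ and $Y$ lie on opposite sides — contradiction. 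If $\ell = \sigma_j\ell$ then $\ell$ is $\sigma_j$-invariant; a $\sigma_j$-invariant geodesic is either $\gamma_j$ itself or is pointwise fixed on $\fix(\sigma_j) = \gamma_j$ restricted to $\ell \cap \gamma_j$, and being a line it meets the line $\gamma_j$ in one point or coincides with it — in the first subcase $X,Y$ are swapped by $\sigma_j$ so cannot both lie in $H_j^+$, in the second $X, Y \in \gamma_j$, again a contradiction. The main obstacle I anticipate is making the "changes sign at most twice, and actually at most once along a segment with endpoints on the same side" argument airtight — this is where I need convexity of extremal length to do real work, ruling out a geodesic that dips across $\gamma_j$ and comes back; I would phrase it as: the convex function $\varphi(t) = \el(\alpha_{j-1}, \ell(t))$ and $\psi(t) = \el(\alpha_{j+1},\ell(t))$ satisfy $\varphi - \psi > 0$ at both endpoints of $[X,Y]$, and if $\varphi - \psi$ vanished in between it would have to be $\le 0$ somewhere (strict crossing), but then it vanishes at two interior points and one checks via the symmetry $\sigma_j$ (which conjugates $\ell$ to $\sigma_j\ell$ and exchanges $\varphi \leftrightarrow \psi$) that $\ell$ would have to equal $\gamma_j$, excluded since $X \notin \gamma_j$.
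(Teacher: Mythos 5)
There is a genuine gap here, and the route you take is far more elaborate than the statement requires. First, the lemma concerns half-planes bounded by \emph{arbitrary} geodesics of $\teich(\pentagon)$ --- later in the paper it is applied to the sides of $P_t$ and to the half-planes $U_a$ bounded by $\eta_a$, none of which are axes of symmetry --- yet everything you actually argue concerns the five axes $\gamma_j = \fix(\sigma_j)$. Your opening reduction (``every geodesic is carried onto an axis by an isometry'') fails: the isometry group acting here is the finite group $D_5$, so a generic geodesic is not equivalent to any axis and has no reflection $\sigma$ available to it. Second, the load-bearing input you invoke --- convexity of $t \mapsto \el(F,\ell(t))$ along Teichm\"uller geodesics --- is not established in the paper and is not a theorem of Kerckhoff or of Gardiner--Masur; you cannot cite it as standard. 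Even granting both points, your case analysis ($\ell \neq \sigma_j\ell$ versus $\ell = \sigma_j\ell$, tracking sign changes of $\el(\alpha_{j-1},\cdot)-\el(\alpha_{j+1},\cdot)$) is left with holes you yourself flag as unresolved.

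The statement needs none of this machinery. You come within a hair of the actual proof when you observe that the content is ``no geodesic crosses $\ell$ twice'': in a uniquely geodesic space, if a geodesic segment meets the geodesic line $\partial H$ in two points $z$ and $w$, then its subsegment from $z$ to $w$ must coincide with the arc of $\partial H$ between them, since both are geodesic segments joining $z$ to $w$. That is the entire argument: if $[x,y]$ has endpoints in the closed half-plane $H$ but is not contained in $H$, take a maximal open subinterval $(z,w)$ of $[x,y]$ lying in the complement of $H$; its endpoints lie on $\partial H$ by maximality, so by uniqueness of geodesics $[z,w]$ lies on $\partial H \subset H$, contradicting that $(z,w)$ avoids $H$. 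No extremal length, no reflections, and it handles every geodesic at once.
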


\begin{proof}
Suppose that a closed half-plane $H$ is not convex. Then there is a geodesic segment $[x,y]$ with endpoints in $H$ which is not contained in $H$. Consider a maximal subinterval $(z,w)\subset [x,y]$ which is contained in the complement of $H$. Then $z$ and $w$ belong to $\partial H$ by maximality. Since $\partial H$ is a geodesic and the geodesic between any two points is unique, the segment $[z,w]$ is contained in $\partial H \subset H$, which is a contradiction. 
\end{proof}

\subsection{Pentagons in the space of pentagons}
For any $t>0$, we define $P_t$ to be the geodesic pentagon with vertices $\gamma_1(t), \gamma_3(t), \gamma_5(t), \gamma_2(t), \gamma_4(t)$ together with the region it bounds. More precisely,
$$ P_t  = \bigcap_{j=1}^5 H^j(t)$$ 
where $H^j(t)$ is the closed half-plane bounded by the geodesic through $\gamma_j(t)$ and $\gamma_{j+2}(t)$ which contains the origin. 

\begin{lem}
$P_t$ is convex for any $t>0$.
\end{lem}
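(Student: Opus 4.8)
The plan is to obtain this directly from the preceding lemma together with the elementary fact that an intersection of convex sets is convex. By definition $P_t = \bigcap_{j=1}^5 H^j(t)$ is a finite intersection of closed half-planes, and we have just shown that every closed half-plane in $\teich(\pentagon)$ is convex, so there is essentially nothing left to do.

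In more detail, I would first note that each $H^j(t)$ really is well-defined. Since the axes $\gamma_j$ pairwise intersect only at the origin and $t>0$, the points $\gamma_j(t)$ and $\gamma_{j+2}(t)$ are distinct from each other and from the origin, so they determine a unique geodesic $g_j$. Moreover the origin does not lie on $g_j$: if it did, then (using unique geodesicity) $g_j$ would, near the origin, be the concatenation of an initial segment of $\gamma_j$ with an initial segment of $\gamma_{j+2}$, forcing one of the rays $\gamma_j^\pm$ to coincide with one of the rays $\gamma_{j+2}^\pm$ --- which is false, since the ten rays $\gamma_i^\pm$ emanating from the origin are pairwise distinct. (Alternatively one can rule this out directly by the triangle inequality.) Hence ``the closed half-plane bounded by $g_j$ that contains the origin'' is unambiguous, and it is convex by the previous lemma.

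Then the conclusion is immediate: given $x,y \in P_t$, for each $j$ we have $x,y \in H^j(t)$, so by convexity of $H^j(t)$ the unique geodesic segment $[x,y]$ lies in $H^j(t)$; since this holds for all $j$, we get $[x,y] \subseteq \bigcap_{j=1}^5 H^j(t) = P_t$, so $P_t$ is convex. I do not expect any genuine obstacle here; the substance is all in the previous lemma, and the only point that deserves a sentence is the well-definedness of the half-planes, which is routine given what is already known about the axes $\gamma_j$.
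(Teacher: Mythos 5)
Your proof is correct and is essentially the paper's own one-line argument: $P_t$ is an intersection of five closed half-planes, each convex by the preceding lemma, and an intersection of convex sets is convex. The extra remarks on well-definedness of the $H^j(t)$ are fine but not needed beyond what the paper already records about the axes $\gamma_j$.
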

\begin{proof}
$P_t$ is the intersection of $5$ closed half-planes each of which is convex. 
\end{proof}

\begin{lem}
If $0< s < t$, then $P_s \subset P_t$.
\end{lem}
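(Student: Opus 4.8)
The plan is to reduce to a single defining half-plane using the $D_5$-symmetry, and then to exploit the \emph{strict} inequality $s<t$ to place the vertices of $P_s$ in the interior of that half-plane, whence $P_s$ itself lies there.

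Since $P_t=\bigcap_{j=1}^{5}H^{j}(t)$, it suffices to show $P_s\subseteq H^{j}(t)$ for each $j$. First I would observe that $D_5$ acts on $\teich(\pentagon)$ by isometries preserving each $P_r$ (it permutes the rays $\gamma_k^{+}$ among themselves, hence permutes the five vertices $\gamma_k(r)$ and the five bounding geodesics of $P_r$), and that its rotation subgroup permutes $H^{1}(t),\dots,H^{5}(t)$ cyclically. Transporting by these rotations, it is enough to prove $P_s\subseteq H^{1}(t)$.

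Now write $L$ for the geodesic through $\gamma_1(t)$ and $\gamma_3(t)$, so $H^{1}(t)$ is the closed half-plane bounded by $L$ containing the origin $\origin$. Because the axes $\gamma_j$ meet pairwise only at $\origin$, the origin does not lie on $L$ (otherwise $L$ would contain the geodesic segments $[\origin,\gamma_1(t)]\subseteq\gamma_1$ and $[\origin,\gamma_3(t)]\subseteq\gamma_3$, forcing $\gamma_1$ and $\gamma_3$ to agree near $\origin$), so $\origin$ lies in the open half-plane $\operatorname{int} H^{1}(t)$. For each $k$, the vertex $\gamma_k(t)$ lies in $P_t\subseteq H^{1}(t)$; since $H^{1}(t)$ is convex (a closed half-plane) and contains $\origin$ in its interior, the half-open geodesic segment $[\origin,\gamma_k(t))=\gamma_k([0,t))$ lies in $\operatorname{int} H^{1}(t)$, and in particular $\gamma_k(s)\in\operatorname{int} H^{1}(t)$ because $0<s<t$. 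Thus all five vertices of $P_s$ lie in the open convex set $\operatorname{int} H^{1}(t)$; hence its boundary, the union of the five geodesic segments joining consecutive vertices, lies in $\operatorname{int} H^{1}(t)$ by convexity, and therefore so does the region $P_s$ it bounds. Transporting by $D_5$ yields $P_s\subseteq H^{j}(t)$ for every $j$, so $P_s\subseteq P_t$.

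The symmetry reduction and the convexity of half-planes (already proved) are routine; the step I expect to require the most care is the elementary ``planar'' input used at the end — that in $\teich(\pentagon)$, a complete uniquely geodesic space homeomorphic to $\RR^{2}$, the half-open segment from an interior point of a convex set to any point of the set remains in the interior, and that the compact region $P_s$ is recovered from (the interior of) its five geodesic sides via the Jordan curve theorem. I expect these to follow from the surrounding analysis (the discussion of the cyclic order of the rays $\gamma_j^{\pm}$ around $\origin$ is exactly what certifies that each $P_r$ is a genuine compact convex geodesic pentagon), after which the nesting is forced by $s<t$ alone, with no need to compare the bounding geodesic of $H^{1}(s)$ directly with that of $H^{1}(t)$ — the more obvious but more delicate route.
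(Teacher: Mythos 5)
Your proof is correct and is essentially the paper's argument: both place the vertices $\gamma_k(s)$ inside $P_t$ (or its defining half-planes) using that they lie on the geodesic segments from the origin to the vertices $\gamma_k(t)$, and then recover all of $P_s$ from its vertices via convexity. The paper states this in two lines (``the vertices of $P_s$ lie in $P_t$; $P_s$ is the convex hull of its vertices and $P_t$ is convex''), whereas you unpack the convex-hull step with a symmetry reduction and a Jordan-curve argument --- more detail, but the same route.
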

\begin{proof}
First observe that the vertices of $P_s$ are contained in $P_t$. Since $P_s$ is the convex hull of its vertices and $P_t$ is convex, the inclusion follows.
\end{proof}

By construction, $P_t$ is also regular since $D_5$ acts on it by isometries in a faithful manner. The only part of Theorem \ref{thm:exhaustionpent} left to prove is that $\teich(\pentagon)=\bigcup_{t>0} P_t$.

\subsection{Symmetric geodesics}

In order to prove that the pentagons $P_t$ exhaust $\teich(\pentagon)$, we will shift our point of view slightly. We need to better understand the geodesics that form the sides of $P_t$. What can we say about the geodesic through $\gamma_2(t)$ and $\gamma_5(t)$ for example? What do the underlying rectangular structures look like? To answer this, observe that the isometry of $\teich(\pentagon)$ induced by the permutation $\sigma_1$ switches the points $\gamma_2(t)$ and $\gamma_5(t)$. Therefore it sends the geodesic through $\gamma_2(t)$ and $\gamma_5(t)$ to itself in an orientation-reversing manner, thereby fixing the midpoint of the segment $[\gamma_2(t),\gamma_5(t)]$. 

We will say that a geodesic which is sent to itself in an orientation-reversing manner by $\sigma_1$ is \emph{symmetric about $\gamma_1$}. It is interesting to note that the geodesics symmetric about $\gamma_1$ foliate $\teich(\pentagon)$. This is ana\-lo\-gous to the existence and uniqueness of perpendiculars in the Euclidean plane and the hyperbolic plane. 

\begin{lem} \label{lem:perp}
For any $x\in \teich(\pentagon)$, there exists a unique geodesic through $x$ which is symmetric about $\gamma_1$.
\end{lem}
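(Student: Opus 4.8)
The plan is to handle the two cases $x\notin\gamma_1$ and $x\in\gamma_1$ separately; the first is essentially formal, and the second carries the real content. Suppose first that $x\notin\gamma_1$. Then $\sigma_1(x)\neq x$ since $\fix(\sigma_1)=\gamma_1$, and I would take $g$ to be the unique geodesic through $x$ and $\sigma_1(x)$. Because $\sigma_1(g)$ is then a geodesic through $\sigma_1(x)$ and $x$, we get $\sigma_1(g)=g$; moreover $\sigma_1$ restricted to the line $g$ is an isometry interchanging the distinct points $x$ and $\sigma_1(x)$ while fixing the midpoint of the segment between them, so it is a reflection, i.e.\ orientation-reversing, and $g$ is symmetric about $\gamma_1$. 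Conversely any symmetric geodesic through $x$ is $\sigma_1$-invariant, hence also contains $\sigma_1(x)\neq x$, hence equals $g$. So both existence and uniqueness are immediate in this case.

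Now suppose $x\in\gamma_1$ and consider the circle of directions at $x$. By Teichm\"uller's theorem and the Hubbard--Masur theorem the geodesic rays from $x$ are parametrized by $\pmf(\pentagon)$, which is a topological circle; I would let $\iota$ denote the involution of this circle sending a ray to its opposite ray, so that geodesics through $x$ are exactly the $\iota$-orbits. The point to establish is that $\iota$ is \emph{fixed-point-free}: a fixed point would be a quadratic differential $q$ on $x$ whose horizontal and vertical foliations are equivalent, which is impossible since $i(\mathrm{hor},\mathrm{ver})=\area(q)>0$ whereas $i(G,G)=0$ for any weighted multiarc $G$ (every measured foliation on $\pentagon$ being a weighted multiarc, by Lemma \ref{lem:multiarc}). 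A fixed-point-free involution of a circle is conjugate to the antipodal map, so in particular $\iota$ is orientation-preserving. Next, since $\sigma_1$ is an isometry fixing $x$, it permutes the rays from $x$ and thus induces an involution $\tau$ of $\pmf(\pentagon)$ commuting with $\iota$ (it carries opposite rays to opposite rays); as the action of the mapping class $(2\,5)(3\,4)$ it fixes the two directions of $\gamma_1$ at $x$, namely $\overline{\alpha_1}$ and $\overline{\alpha_2+\alpha_5}$, and since a nontrivial involution of a circle with a fixed point is a reflection these are its only fixed points.

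With this set-up in hand, $\tau\iota$ is a product of two commuting involutions, hence an involution, and it is orientation-reversing, hence a reflection; I would let $\{\overline{F_0},\overline{F_1}\}$ be its two fixed points. From $\tau\iota\,\overline{F_0}=\overline{F_0}$ one reads off $\iota\,\overline{F_0}=\tau\,\overline{F_0}$, and applying $\iota$ shows $\iota\,\overline{F_0}$ is also fixed by $\tau\iota$; since $\iota$ is free this forces $\overline{F_1}=\iota\,\overline{F_0}$, so $\fix(\tau\iota)$ is a single $\iota$-orbit and therefore defines a unique geodesic $g$ through $x$. This $g$ is symmetric about $\gamma_1$: since $\tau\,\overline{F_0}=\iota\,\overline{F_0}\neq\overline{F_0}$, the isometry $\sigma_1$ interchanges the two rays of $g$ at $x$ and hence reverses its orientation, and $g\neq\gamma_1$ because $\overline{F_0}$ is not $\tau$-fixed. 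For uniqueness, a symmetric geodesic $g'$ through $x$ meets $\gamma_1$ only at $x$ (the reflection $\sigma_1|_{g'}$ has a unique fixed point, which lies in $\gamma_1$), so $\sigma_1$ interchanges the two rays of $g'$ at $x$; if $\overline{F'}$ is the direction of one of them then $\iota\,\overline{F'}=\tau\,\overline{F'}$, so $\overline{F'}\in\fix(\tau\iota)=\{\overline{F_0},\overline{F_1}\}$ and $g'=g$.

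I expect the only genuine obstacle to be the fixed-point-freeness of $\iota$ --- equivalently, that a quadratic differential cannot have equivalent horizontal and vertical foliations. This is precisely what forces $\tau\iota$ to be a reflection with two fixed points rather than a free involution with none, and hence what makes a symmetric geodesic through a point of $\gamma_1$ exist at all; everything after that is bookkeeping with commuting involutions of a circle and with uniqueness of geodesics.
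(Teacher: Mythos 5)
Your proof is correct, and while the first case ($x\notin\gamma_1$) coincides with the paper's, your treatment of the main case $x\in\gamma_1$ takes a genuinely different route. The paper works in the cotangent space: it picks a nonzero tangent vector $v$ to $\gamma_1$ at $x$, notes that the quadratic differentials pairing trivially with $v$ form a line, and uses the facts that $\sigma_1$ fixes $v$ and preserves the tangent--cotangent pairing to conclude that $\sigma_1$ sends any such $q$ to $-q$; existence and uniqueness then both come from the $1$-dimensionality of $v^\perp$. You instead work entirely on the circle of directions at $x$, identified with $\pmf(\pentagon)$, and play off the two commuting involutions $\iota$ (opposite ray) and $\tau$ (induced by $\sigma_1$): symmetric geodesics through $x$ correspond to the fixed points of the orientation-reversing involution $\tau\iota$, which form exactly one $\iota$-orbit. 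The substantive input you need that the paper does not is the freeness of $\iota$, i.e.\ that no quadratic differential has projectively equivalent horizontal and vertical foliations; your derivation of this from $i(\mathrm{hor},\mathrm{ver})=\area(q)>0$ together with $i(G,G)=0$ for weighted multiarcs (via Lemma \ref{lem:multiarc}) is a standard fact and is correct, though it is not established in the paper and should be cited (e.g.\ Gardiner--Masur). What your approach buys: it avoids the tangent--cotangent pairing, which the paper only introduces informally, and it makes fully explicit a point the paper glosses over, namely why $\sigma_1$ cannot fix the cotangent direction $q$ (in the paper this is the unjustified clause ``yet is different from $q$''). What it costs: you must verify some point-set facts about involutions of $S^1$ (a free involution is conjugate to the antipodal map, a nontrivial involution with a fixed point has exactly two), and you rely on the identification of the sphere of rays at $x$ with $\pmf(\pentagon)$ being equivariant for the $D_5$-action, which is true but worth a sentence. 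Both arguments use the $2$-dimensionality of the space in an essential way, so neither generalizes more readily than the other.
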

\begin{proof}
First assume that $x$ does not belong to the axis of reflection $\gamma_1$. Then $\sigma_1(x) \neq x$ and the geodesic through these two points is sent to itself in an orientation-reversing manner by $\sigma_1$. Conversely, if $\eta$ is a geodesic containing $x$ and $\sigma_1(\eta) = \eta$, then $\eta$ contains $\sigma_1(x)$, which proves uniqueness.

Now suppose that $x \in \gamma_1$. Consider a non-zero tangent vector $v$ to $\gamma_1$ at $x$. The space of quadratic differentials $q$ on $x$ which pair trivially with $v$ is $1$-dimensional. Let $q\neq 0$ be such a quadratic differential. Since $\sigma_1$ fixes $v$ and preserves the pairing between tangent and cotangent vectors, it sends $q$ to a quadratic differential of the same norm which pairs trivially with $v$ yet is different from $q$, i.e., to $-q$. Thus $\sigma_1$ sends the geodesic cotangent to $q$ to the geodesic cotangent to $-q$, that is, to itself in an orientation-reversing manner. 

Conversely, let $\eta$ be a geodesic through $x$ which is symmetric about $\gamma_1$ and let $q$ be its unit cotangent vector at $x$. Then  $\sigma_1$ sends $q$ to $-q$ while it fixes $v$. Since $\sigma_1$ is an isometry, it preserves the pairing between tangent and cotangent vectors, so that
$$
\langle v , q\rangle = \langle v , -q\rangle \quad \Rightarrow \quad  \langle v , q\rangle = 0.
$$
As we observed before, the orthogonal complement $v^\perp$ is $1$-dimensional, which means that $q$ is determined up to a scalar and that $\eta$ is unique.
\end{proof}

Actually, the geodesics symmetric about $\gamma_1$ can be described explicitly. For any $a>0$, consider the $L$-shape $\Phi_a$ with vertices at $0$, $(1+a)$, $(1+a)+i$, $1+i$, $1+(1+a)i$ and $(1+a)i$ where all vertices except $1+i$ are marked and the first marked point is the origin (see Figure \ref{fig:symL}). Let $R$ be the reflection about the line $y=x$. Observe that $R(\Phi_a) = \Phi_a$ and that $R$ acts as the permutation $\sigma_1 = (25)(34)$ on the marked points. Thus $\Phi_a$ represents a point on $\gamma_1$. More generally, for any $t \in \RR$ we have
$$
R \left( \begin{pmatrix} e^t & 0 \\ 0 & e^{-t} \end{pmatrix} \cdot \Phi_a \right) = \begin{pmatrix} e^{-t} & 0 \\ 0 & e^{t} \end{pmatrix} \cdot \Phi_a
$$ 
meaning that Teichm\"uller flow followed by reflection is the same as negative Teichm\"uller flow. In particular, the Teichm\"uller geodesic $\eta_a = \{ \g_t \Phi_a \mid t \in \RR \}$ cotangent to $\Phi_a$ is sent to itself in an orien\-ta\-tion-reversing manner by $\sigma_1$. 

\begin{figure}[htp] 
\includegraphics[scale=.8]{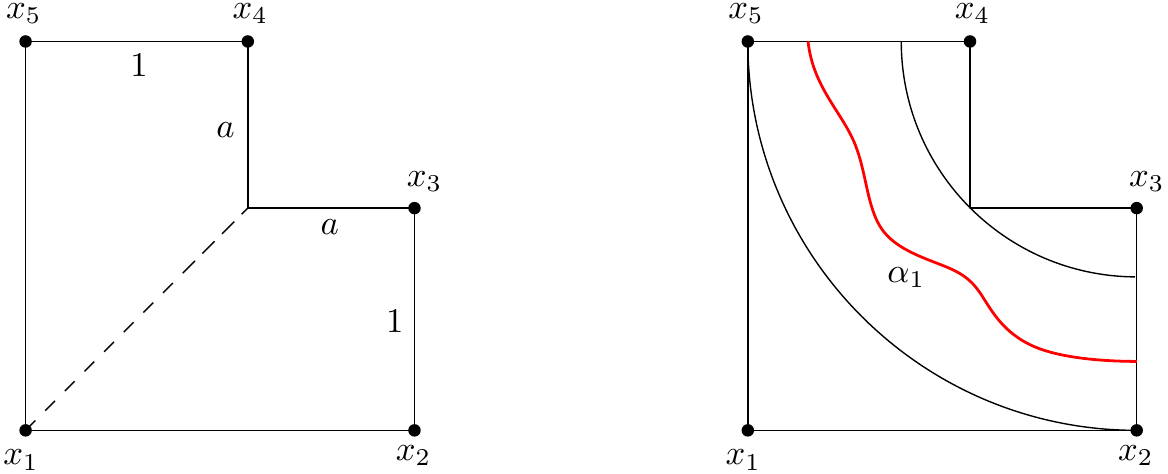}
\caption{The symmetric $L$-shape $\Phi_a$ and an embedded circular rectangle homotopic to $\alpha_1$.} \label{fig:symL}
\end{figure}

\begin{remark}
The geodesic $\eta_{1/4}$ was used in \cite{FBRafi} to prove the exis\-ten\-ce of a non-convex ball in $\teich(\pentagon)$. The proof presented there implies that some ball $B$ centered on $\gamma_1$ is such that a segment of $\eta_{1/4}$ symmetric about $\gamma_1$ has its endpoints in $B$ but its midpoint $\Phi_{1/4}$ outside $B$. However, the ball $B$ could have very large radius a priori. In the course of this project, we found numerical evidence suggesting that there is a non-convex ball of radius less than $1$. 
\end{remark}

We now show that every geodesic symmetric about $\gamma_1$ is of this form.

\begin{prop} \label{prop:explicit}
Any geodesic symmetric about $\gamma_1$ is equal to $\eta_a$ for a unique $a>0$.
\end{prop}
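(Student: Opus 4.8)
The plan is to show that $a\mapsto\Phi_a$ is a continuous bijection of $(0,\infty)$ onto $\gamma_1$, and then to read off the statement from Lemma~\ref{lem:perp}. First note that every geodesic $\eta$ symmetric about $\gamma_1$ meets $\gamma_1$ in exactly one point: since $\sigma_1$ is an isometry reversing $\eta\cong\RR$, it has a single fixed point on $\eta$, and that point lies in $\fix(\sigma_1)=\gamma_1$; a second point of intersection would force $\eta=\gamma_1$ by uniqueness of geodesics, which is absurd because $\sigma_1$ fixes $\gamma_1$ pointwise. Applied to $\eta_a$ — through which $\Phi_a=\g_0\Phi_a$ passes, and which also lies on $\gamma_1$ — this gives $\eta_a\cap\gamma_1=\{\Phi_a\}$. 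Granting that $a\mapsto\Phi_a$ is a bijection onto $\gamma_1$, an arbitrary symmetric geodesic $\eta$ meets $\gamma_1$ at $\Phi_a$ for a unique $a>0$, and then $\eta=\eta_a$ since both are symmetric geodesics through $\Phi_a$, proving the proposition.

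Continuity of $a\mapsto\Phi_a$ is clear from the explicit $L$-shapes. For injectivity, I would examine the quadratic differential $dz^2$ on $\Phi_a$: its horizontal leaves sweep the horizontal arm as a band of $\alpha_4$-arcs of transverse measure $1$ and the remaining (upper) part of the vertical arm as a band of $\alpha_2$-arcs of transverse measure $a$, so the horizontal foliation of the geodesic $\eta_a$ is $\alpha_4+a\,\alpha_2$, and (applying $\sigma_1$, which swaps the two foliations and sends $\alpha_4,\alpha_2$ to $\alpha_3,\alpha_5$) its vertical foliation is $\alpha_3+a\,\alpha_5$. The projective classes $\overline{\alpha_4+a\,\alpha_2}$ and $\overline{\alpha_3+a\,\alpha_5}$ are the two endpoints of $\eta_a$ in $\pmf(\pentagon)$; they lie on two edges of the pentagon $\pmf(\pentagon)$ that share no vertex, and each varies injectively over the interior of its edge as $a$ ranges over $(0,\infty)$. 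Hence $a\mapsto\eta_a$ is injective, and therefore so is $a\mapsto\Phi_a$ (equal values $\Phi_a=\Phi_b$ would give $\eta_a=\eta_b$, the symmetric geodesic through that common point).

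It remains to prove surjectivity. Write $\Phi_a=\gamma_1(\tau(a))$ using the arclength parametrization of $\gamma_1$ based at $\gamma_1(0)=X_0$, so $d(\Phi_a,X_0)=|\tau(a)|$. As a continuous injection into $\gamma_1\cong\RR$, the function $\tau$ is strictly monotone, so it is enough to show $d(\Phi_a,X_0)\to\infty$ as $a\to\infty$ and as $a\to0^+$, which then forces $\tau((0,\infty))=\RR$. As $a\to\infty$: the Euclidean rectangle $(0,1)\times(1,1+a)$ lying inside $\Phi_a$ is a conformally embedded rectangle homotopic to $\alpha_2$ of extremal length $1/a$, so $\el(\alpha_2,\Phi_a)\le 1/a$, and Kerckhoff's formula yields $d(\Phi_a,X_0)\ge\frac{1}{2}\log\frac{\el(\alpha_2,X_0)}{\el(\alpha_2,\Phi_a)}\ge\frac{1}{2}\log\!\big(a\,\el(\alpha_2,X_0)\big)\to\infty$. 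As $a\to0^+$: the marked points $x_3$ and $x_4$ of $\Phi_a$ both tend to the inner corner $1+i$ (they are joined by a path of flat length $2a$ through the zero), so the essential arc $\alpha_1$ separating $\{x_3,x_4\}$ from the other three marked points is carried by an embedded rectangle near $1+i$ — the circular rectangle of Figure~\ref{fig:symL}, situated in the $3\pi/2$-wedge around the zero, whose radius shrinks with $a$ — of extremal length tending to $0$; thus $\el(\alpha_1,\Phi_a)\to0$ and again $d(\Phi_a,X_0)\ge\frac{1}{2}\log\frac{\el(\alpha_1,X_0)}{\el(\alpha_1,\Phi_a)}\to\infty$.

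The hardest step is this last degeneration as $a\to0^+$: one has to exhibit the embedded rectangle homotopic to $\alpha_1$ near $1+i$ and bound its extremal length above by a quantity going to $0$, checking carefully its homotopy class and embeddedness inside the shrinking $3\pi/2$-wedge. The injectivity argument and the estimate as $a\to\infty$ are comparatively routine.
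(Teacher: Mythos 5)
Your proof is correct, and it follows the same skeleton as the paper's: reduce via Lemma~\ref{lem:perp} to showing that $a\mapsto\Phi_a$ is a continuous bijection of $(0,\infty)$ onto $\gamma_1$, then establish injectivity and surjectivity by extremal-length estimates on the explicit $L$-shapes. Both halves, however, are executed by genuinely different means. For injectivity the paper uses the nesting $\Phi_a\subset\Phi_b$ for $a<b$: extending the extremal metric for $\alpha_1$ by zero shows that $\el(\alpha_1,\Phi_a)<\el(\alpha_1,\Phi_b)$ strictly, and since $\gamma_1(t)\mapsto\el(\alpha_1,\gamma_1(t))=c_0e^{-2t}$ is a bijection this pins down the point of $\gamma_1$. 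You instead read off $a$ from the horizontal and vertical foliations $\alpha_4+a\alpha_2$ and $\alpha_3+a\alpha_5$ of the cotangent differential (consistent with the caption of Figure~\ref{fig:perp}); this is clean, needs only the standard fact that a geodesic through a point determines its unit cotangent vector up to sign, and does not really require passing to endpoints in $\pmf(\pentagon)$ --- comparing the foliations at the common point already suffices. For surjectivity the paper shows directly that $\el(\alpha_1,\Phi_a)$ sweeps out all of $(0,\infty)$, whereas you show $d(\Phi_a,X_0)\to\infty$ at both ends and use monotonicity of the position along $\gamma_1\cong\RR$; your bound $\el(\alpha_2,\Phi_a)\le 1/a$ for the $a\to\infty$ end is a nice shortcut, and the Kerckhoff lower bounds are applied correctly. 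The one detail to tighten is the annulus for $a\to0^{+}$: the embedded circular rectangle of Figure~\ref{fig:symL} is a genuine quarter-annulus centered at the corner $(1+a)+(1+a)i$ of the notch, with inner radius $a\sqrt{2}$, outer radius $1+a$, and straight sides lying on the sides $[x_2,x_3]$ and $[x_4,x_5]$ of $\Phi_a$ --- it is not centered at the zero $1+i$. An annulus centered at the zero has angular extent varying with the radius and radial sides that land in the interior of $\Phi_a$, so as described it is not a properly embedded quadrilateral; either use the paper's center or cut down to a fixed sector whose straight sides reach $\partial\Phi_a$. With that adjustment the degeneration $\el(\alpha_1,\Phi_a)\to0$ goes through exactly as you intend, and the argument is complete.
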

\begin{proof}
We already observed that $\eta_a$ is symmetric about $\gamma_1$ for any $a>0$. If $\tau$ is a geodesic symmetric about $\gamma_1$, then it intersects $\gamma_1$ at some point $x$. By uniqueness of the symmetric geodesic through $x$, it suffices to prove that $x \in \eta_a$ for a unique $a>0$. In other words, we have to show that the map $a \mapsto \Phi_a$ from $(0,\infty)$ to $\gamma_1$ is a bijection. 

 Observe that $\gamma_1(t)$ can be represented by a rectangle of length $ e^t / \sqrt{c_0}$ and height $\sqrt{c_0} e^{-t}$ with vertex $x_1$ in the middle of the left side, where $c_0 = \el(\alpha_1,\gamma_1(0))$. Indeed, this describes a Teichm\"uller geodesic fixed pointwise by $\sigma_1$. In particular, the map $\gamma_1(t) \mapsto \el(\alpha_1,\gamma_1(t)) = c_0 e^{-2t}$ is a bijection from $\gamma_1$ to $(0,\infty)$. Thus in order to prove the above statement, it suffices to show that the map
$$ 
a \mapsto \el(\alpha_1,\Phi_a)
$$
is a bijection of $(0,\infty)$ onto itself.

If $0<a<b$, then $\Phi_a \subset \Phi_b$. Let $q$ be the quadratic differential on $\Phi_a$ realizing the extremal length of $\alpha_1$ and let $\rho = \sqrt{|q|}$ be the corresponding conformal metric. We extend $\rho$ to a conformal metric $\wtilde{\rho}$ on $\Phi_b$ by setting it to be $0$ on $\Phi_b \setminus \Phi_a$. Every arc homotopic to $\alpha_1$ on $\Phi_b$ contains a subarc homotopic to $\alpha_1$ on $\Phi_a$ so that 
$$
\ell_{\wtilde \rho}\,[\alpha_1] = \ell_{\rho}\,[\alpha_1].$$ 
Clearly, $\wtilde{\rho}$ is not the extremal metric on $\Phi_b$ hence
$$
\el(\alpha_1, \Phi_b) > \frac{(\ell_{\wtilde{\rho}}\,[\alpha_1])^2}{\area(\wtilde{\rho})}= \frac{(\ell_{\rho}\,[\alpha_1])^2}{\area(\rho)} = \el(\alpha_1,\Phi_a).
$$
This shows that extremal length is monotone in $a$. 

It remains to prove surjectivity. For $0<a<\frac{1}{\sqrt{2}-1}$, the $L$-shape $\Phi_a$ contains a quarter of an annulus centered at $(1+a)+(1+a)i$ with inner radius $a\sqrt{2}$ and outer radius $(1+a)$ (see Figure \ref{fig:symL}). The extremal length around this circular strip is equal to 
$$
\frac{\pi /2}{\log(1+a) - \log(a \sqrt{2})}
$$ 
which is an upper bound for $\el(\alpha_1, \Phi_a)$. This implies that $\el(\alpha_1, \Phi_a) \to 0$ as $a \to 0$. On the other hand, the Euclidean metric $\rho$ on $\Phi_a$ gives the lower bound
$$
\el(\alpha_1, \Phi_a)  \geq \frac{(2a)^2}{1 + 2a}
$$
which tends to infinity with $a$. By continuity, every positive value is attained.
\end{proof}

\begin{figure}[htp] 
\includegraphics[scale=.9]{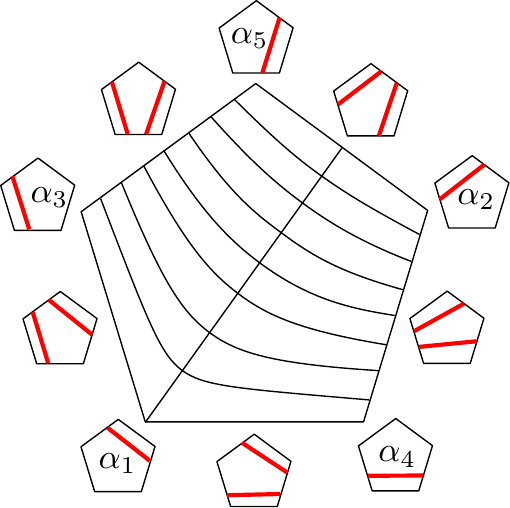}
\caption{$\teich(\pentagon)$ is foliated by geodesics $\eta_a$ symmetric about $\gamma_1$. The projective horizontal and vertical foliations for $\eta_a$ are $\overline{\alpha_4 + a \alpha_2}$ and $\overline{\alpha_3+a \alpha_5}$ respectively.} \label{fig:perp}
\end{figure}

Let $U_a$ be the closed half-plane bounded by $\eta_a$ which points towards $\gamma_1^+$. By Lemma \ref{lem:perp} and Proposition \ref{prop:explicit}, these half-planes exhaust $\teich(\pentagon)$ as $a \nearrow \infty$. Similarly, the sets
$$
Q_a = \bigcap_{j=1}^5 \sigma_j(U_a)
$$ 
exhaust $\teich(\pentagon)$ as $a \nearrow \infty$. This almost implies what we want. The issue here is that a priori $Q_a$ could be non-compact for large $a$, as would happen in the hyperbolic plane for example.  What we need to show is that each side of $Q_a$ intersects its neighbors and hence that $Q_a$ is equal to $P_t$ for some $t>0$, provided that $a$ is large enough so that $Q_a$ is not empty. Figure \ref{fig:perp} suggests the proof: the projective classes of the horizontal and vertical foliations for $\eta_a$ are linked with those of $\sigma_5(\eta_a)$ in $\pmf(\pentagon)$, forcing $\eta_a$ and $\sigma_5(\eta_a)$ to intersect. 

In order to make that argument rigorous, one needs to put a topology on $$\teich(\pentagon)\cup \pmf(\pentagon)$$ in which the closure of $\eta_a$ disconnects the endpoints of $\sigma_5(\eta_a)$. Thurston's compac\-ti\-fication \cite[p.118]{FLP} ---which is homeomorphic to a closed disc--- does the job. By Lemma \ref{lem:multiarc} every geodesic ray in $\teich(\pentagon)$ is Jenkins--Strebel, hence converges in Thurston's boundary to the vertex corresponding to its vertical foliation or to the center of the open edge containing its vertical foliation \cite{MasurTwoBoundaries}. In particular, the geodesics $\eta_a$ all converge to $\overline{\alpha_4 + \alpha_2}$ in the backward direction and to $\overline{\alpha_3 + \alpha_5}$ in the forward direction, while $\sigma_5(\eta_a)$ converges to $\overline{\alpha_1+\alpha_3}$ and $\overline{\alpha_2 + \alpha_5}$.

We will give another proof that $\eta_a$ intersects $\sigma_5(\eta_a)$ which yields more information such as estimates on the lengths of the sides of $Q_a$. Observe that $\eta_a$ intersects $\sigma_5(\eta_a)$ if and only if $\eta_a$ intersects $\gamma_5$, and this is what we will show. To do this, we will characterize $\gamma_5$ as the set of solutions to an equation involving extremal length and then use the intermediate value theorem.

\subsection{Equal extremal lengths implies symmetry}

Recall that $\alpha_5$ is the arc in $\pentagon$ which separates the vertices $4$, $5$, $1$ from $2$ and $3$. By conformal invariance of extremal length, if $X \in \gamma_5$ then
$$
\el(\alpha_1,X) = \el(\alpha_4,X)
$$
as $\sigma_5$ permutes the arcs $\alpha_1$ and $\alpha_4$. The converse is also true.

\begin{lem} \label{lem:equalissym}
Let $X \in \teich(\pentagon)$. Suppose that $\el(\alpha_1,X) = \el(\alpha_4,X)$. Then $X \in \gamma_5$, i.e., $X$ admits an anti-conformal involution fixing the vertex $x_5$.
\end{lem}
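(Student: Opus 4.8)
The plan is to reduce the statement to the monotonicity of $\el(\alpha_1,\cdot)$ along a Teichm\"uller geodesic. By the analogues for $\gamma_5$ of Lemma~\ref{lem:perp} and Proposition~\ref{prop:explicit} (conjugate those statements by the rotation in $D_5$ carrying $\gamma_1$ to $\gamma_5$), the space $\teich(\pentagon)$ is foliated by the geodesics symmetric about $\gamma_5$, each meeting $\gamma_5$ in exactly one point. Let $\delta$ be the one through $X$, parametrized so that $\delta(0)=\delta\cap\gamma_5$ and $\sigma_5(\delta(t))=\delta(-t)$, and write $X=\delta(t_0)$. Because $\sigma_5$ acts as an isometry carrying $\alpha_1$ to $\alpha_4$, conformal invariance of extremal length gives $\el(\alpha_4,\delta(t_0))=\el(\sigma_5\alpha_4,\sigma_5\delta(t_0))=\el(\alpha_1,\delta(-t_0))$, so the hypothesis $\el(\alpha_1,X)=\el(\alpha_4,X)$ says exactly that $\phi(t):=\el(\alpha_1,\delta(t))$ satisfies $\phi(t_0)=\phi(-t_0)$. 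If $\phi$ is strictly increasing this forces $t_0=0$, i.e. $X=\delta(0)\in\gamma_5$, as desired.

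Orient $\delta$ so that its horizontal and vertical foliations are $\overline{\alpha_3+a\alpha_1}$ and $\overline{\alpha_2+a\alpha_4}$ respectively (the $\gamma_5$-versions of the foliations for $\eta_a$ depicted in Figure~\ref{fig:perp}). By Lemma~\ref{lem:lshape} the differential defining $\delta$ is an $L$-shape, one of whose two horizontal bands is homotopic to $\alpha_1$, so $\delta(t)$ is this $L$-shape with its horizontal direction dilated by $e^t$ and its vertical direction by $e^{-t}$. Since $\alpha_1$ and $\alpha_3$ are disjoint while $\alpha_1$ and $\alpha_2$ cross, $i(\alpha_1,\overline{\alpha_3+a\alpha_1})=0$ and $i(\alpha_1,\overline{\alpha_2+a\alpha_4})>0$. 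This yields two elementary bounds on $\phi$: representing $\alpha_1$ by a horizontal leaf of the $L$-shape, whose $\delta(t)$-flat length is $e^t$ times its $\delta(0)$-flat length, gives $\phi(t)\le C e^{2t}$ (so $\phi(t)\to 0$ as $t\to-\infty$, the flat area being constant); and since any representative of $\alpha_1$ has horizontal variation at least $i(\alpha_1,\overline{\alpha_2+a\alpha_4})$ in the flat coordinates, its $\delta(t)$-flat length is at least a constant times $e^{t}$, whence $\phi(t)\ge c e^{2t}$ (so $\phi(t)\to\infty$ as $t\to+\infty$).

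The crux is the convexity of $t\mapsto\el(\alpha_1,\delta(t))^{1/2}=\sqrt{\phi(t)}$. I would quote this as a known convexity property of extremal length along Teichm\"uller geodesics; alternatively, for the pentagon it can be checked directly, since by Lemma~\ref{lem:lshape} (applied to the differential whose horizontal foliation is the single arc $\alpha_1$) the function $\phi(t)$ is the area of an explicit rectangle with $x_1$ in the middle of one side, which can be written down from the $L$-shape of Proposition~\ref{prop:explicit}. Granting convexity, the endpoint behavior from the previous paragraph finishes the argument: a convex function $f:\RR\to(0,\infty)$ with $\lim_{t\to-\infty}f(t)=0$ is strictly increasing, for otherwise it would attain an interior minimum and convexity would force $\lim_{t\to-\infty}f$ to be at least that positive value, a contradiction; hence $\sqrt{\phi}$, and therefore $\phi$, is strictly increasing. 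The main obstacle is thus precisely this convexity statement; everything else is bookkeeping with the reflections $\sigma_j$ and with the $L$-shape pictures already in play.
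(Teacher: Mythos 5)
Your reduction is sound as far as it goes: writing $X=\delta(t_0)$ on the unique geodesic symmetric about $\gamma_5$ (the conjugate of Lemma~\ref{lem:perp}), and using $\sigma_5(\alpha_4)=\alpha_1$ together with $\sigma_5(\delta(t))=\delta(-t)$ to turn the hypothesis into $\phi(t_0)=\phi(-t_0)$ for $\phi(t)=\el(\alpha_1,\delta(t))$, is legitimate, and your two-sided bounds $c\,e^{2t}\le\phi(t)\le C e^{2t}$ are correct. But the step you yourself identify as the crux --- convexity of $t\mapsto\sqrt{\phi(t)}$ along a Teichm\"uller geodesic --- is a genuine gap, not a citation. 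What is actually known in this direction is only \emph{quasi}-convexity of extremal length along Teichm\"uller geodesics (Lenzhen--Rafi), i.e.\ an inequality with a multiplicative constant $C>1$, which is useless here: it does not exclude $\phi(t_0)=\phi(-t_0)$ with $t_0\neq 0$. Worse, honest convexity statements of this type are known to fail in this very space: by Kerckhoff's formula, if $\log\el(F,\cdot)$ were convex along every Teichm\"uller geodesic then all balls in $\teich(\pentagon)$ would be convex, and \cite{FBRafi} (quoted in this paper) shows they are not. Convexity of $\sqrt{\el}$ is formally weaker than log-convexity, but it is not a theorem you can quote, and it is exactly where the content of the lemma lives. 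Your fallback --- ``write down $\phi$ explicitly from the $L$-shape'' --- is not carried out and is not bookkeeping: $\el(\alpha_1,\g_t\Phi_a)$ is the modulus of an $L$-shaped quadrilateral, given by a Schwarz--Christoffel map and elliptic integrals, and checking convexity of that expression is a substantial computation. (A small additional slip: the Jenkins--Strebel rectangle for $\alpha_1$ has its fifth marked point at the \emph{midpoint} of a side only for points of $\gamma_1$, not in general.)

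The paper's proof is entirely different and elementary, and you should compare it with your plan. One maps $X$ conformally to a Euclidean rectangle with $x_1,x_2,x_3,x_4$ at the corners and $x_5$ on the side $[x_4,x_1]$. If $x_5$ is not the midpoint of that side, say $[x_4,x_5]$ is shorter than $[x_5,x_1]$, then reflecting in the perpendicular bisector of $[x_4,x_1]$ conformally embeds the quadrilateral realizing $\el(\alpha_1,X)$ properly into the one realizing $\el(\alpha_4,X)$, and monotonicity of extremal length gives the strict inequality $\el(\alpha_1,X)>\el(\alpha_4,X)$, contradicting the hypothesis; the resulting reflection is then the desired anti-conformal involution fixing $x_5$. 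If you want to rescue your approach, aim to prove strict monotonicity of $\phi$ directly rather than routing through convexity; but the symmetrization argument is both shorter and self-contained.
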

\begin{proof}
Map $X$ conformally onto a rectangle in such a way that the vertex $x_5$ is on a side and the other vertices are at the corners of the rectangle. Suppose that the segment $[x_4, x_5]$ is strictly shorter than $[x_5,x_1]$. Then the topological quadri\-la\-teral joining $[x_4, x_5]$ to $[x_2,x_3]$ embeds conformally in (and is different from) the quadrilateral joining $[x_5, x_1]$ to $[x_2,x_3]$. To see this, simply reflect about the perpendicular bisector of $[x_4,x_1]$. By monotonicity of extremal length, this implies that $\el(\alpha_1,X) > \el(\alpha_4,X)$ which is a contradiction. As the argument is symme\-tric in $x_1$ and $x_4$, the vertex $x_5$ must lie in the middle of its side. The reflection of the rectangle about the perpendicular bisector of $[x_4,x_1]$ is an anti-conformal involution of $X$ fixing $x_5$. 
\end{proof}

\subsection{Extremal length estimates}

By the previous subsection, $\gamma_5$ is the locus of points $X$ in $\teich(\pentagon)$ such that $\el(\alpha_1,X) = \el(\alpha_4,X)$. Recall also that $$\eta_a= \{ \g_t \Phi_a \mid t \in \RR \}$$ where $\g_t$ is the diagonal matrix $\left(\begin{smallmatrix} e^t & 0 \\ 0 & e^{-t} \end{smallmatrix}\right)$
and $\Phi_a$ is the symmetric $L$-shape with legs of length $a$. Note that $\g_t  \Phi_a$ is conformally equivalent to $\h_t  \Phi_a$ where $\h_t =\left(\begin{smallmatrix} e^{2t} & 0 \\ 0 & 1 \end{smallmatrix}\right).$ We will use this rescaling when convenient for calculations.

\begin{prop} \label{prop:intersection}
If $a \geq 2$, then $\eta_a$ intersects $\gamma_5$. More precisely, $\g_t\Phi_a$ belongs to $\gamma_5$ for some $t \in [0, \log(1+a)]$.
\end{prop}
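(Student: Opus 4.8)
\section*{Proof proposal for Proposition~\ref{prop:intersection}}

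The plan is to detect $\gamma_5$ along $\eta_a$ by the intermediate value theorem. By Lemma~\ref{lem:equalissym}, for every $t$ the point $\g_t\Phi_a$ lies on $\gamma_5$ exactly when $\el(\alpha_1,\g_t\Phi_a)=\el(\alpha_4,\g_t\Phi_a)$. So I would study the function
$$
\Delta(t)\;=\;\el(\alpha_1,\g_t\Phi_a)-\el(\alpha_4,\g_t\Phi_a),
$$
which is continuous in $t$ because $\el(\beta,\cdot)=\area(q_\beta)$ depends continuously on the Riemann surface (Hubbard--Masur) and $t\mapsto\g_t\Phi_a$ is a geodesic. It suffices to show $\Delta(0)>0$ and $\Delta(\log(1+a))<0$ when $a\ge 2$; the zero of $\Delta$ produced in between gives the desired $t_0\in(0,\log(1+a))$, and then $\g_{t_0}\Phi_a$ lies on both $\eta_a$ and $\gamma_5$.

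For the endpoint $t=0$ I would recycle the two estimates already in the proof of Proposition~\ref{prop:explicit}: the Euclidean metric on $\Phi_a$ gives $\el(\alpha_1,\Phi_a)\ge (2a)^2/(1+2a)$, while the rectangle $[0,1+a]\times[0,1]$ embeds conformally in $\Phi_a$ with core arc homotopic to $\alpha_4$, so the second description of extremal length gives $\el(\alpha_4,\Phi_a)\le 1+a$. Hence
$$
\Delta(0)\;\ge\;\frac{4a^2}{1+2a}-(1+a)\;=\;\frac{2a^2-3a-1}{1+2a}\;>\;0\qquad(a\ge 2),
$$
so this endpoint is easy.

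The endpoint $t=\log(1+a)$ is the heart of the matter. Using $\g_t\Phi_a\cong\h_t\Phi_a$ with $\h_t=\bigl(\begin{smallmatrix}e^{2t}&0\\0&1\end{smallmatrix}\bigr)$, this point is the (very elongated) $L$-shape $\h_{\log(1+a)}\Phi_a=H\cup V$ with horizontal leg $H=[0,(1+a)^3]\times[0,1]$, vertical leg $V=[0,(1+a)^2]\times[0,1+a]$, and reflex corner at $((1+a)^2,1)$; I must show $\el(\alpha_1)<\el(\alpha_4)$ here. For the lower bound on $\el(\alpha_4)$ I would partition the surface into the thin strip $\Omega_1=[(1+a)^2,(1+a)^3]\times[0,1]$ and $\Omega_2=V$: every arc homotopic to $\alpha_4$ crosses $\Omega_1$ connecting its two vertical sides and crosses $\Omega_2$ from the shared side to the left edge $[x_5,x_1]$, so the serial (superadditivity) law for extremal length together with the Euclidean metric on each piece yields
$$
\el(\alpha_4,\h_{\log(1+a)}\Phi_a)\;\ge\;a(1+a)^2+(1+a).
$$
For the upper bound on $\el(\alpha_1)$ I would exhibit an explicit conformally embedded ``fat elbow'' quadrilateral homotopic to $\alpha_1$ — the thin strip above, a circular‑annular sector handling the reflex corner (as in the surjectivity step of Proposition~\ref{prop:explicit}, after composing with $z\mapsto\log z$), and a wide piece inside $V$ reaching $[x_4,x_5]$ — and estimate its modulus; the reservoir provided by $V$ keeps $\el(\alpha_1)$ below $a(1+a)^2+(1+a)$ once $a$ is large enough. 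The main obstacle is precisely this sharpness: both extremal lengths have the same leading term $a(1+a)^2$, and the gap between them is only of order $1+a$, so crude one‑metric estimates (which are off by a bounded factor) do not suffice — one has to build the comparison quadrilateral for $\alpha_1$ carefully enough that its modulus bound meets the serial lower bound for $\el(\alpha_4)$, and it is exactly at this step that the hypothesis $a\ge 2$ is consumed.

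Granting the two endpoint inequalities, $\Delta$ changes sign on $[0,\log(1+a)]$, the intermediate value theorem gives $t_0\in(0,\log(1+a))$ with $\el(\alpha_1,\g_{t_0}\Phi_a)=\el(\alpha_4,\g_{t_0}\Phi_a)$, and Lemma~\ref{lem:equalissym} places $\g_{t_0}\Phi_a$ on $\gamma_5$. In particular $\eta_a\cap\gamma_5\neq\varnothing$, which is the assertion of the proposition.
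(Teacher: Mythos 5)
Your overall strategy is exactly the paper's: apply Lemma \ref{lem:equalissym} to characterize $\gamma_5$ as the locus $\el(\alpha_1,\cdot)=\el(\alpha_4,\cdot)$, and run the intermediate value theorem on $\Delta(t)=\el(\alpha_1,\g_t\Phi_a)-\el(\alpha_4,\g_t\Phi_a)$ over $[0,\log(1+a)]$. Your $t=0$ endpoint matches the paper's (same two estimates, and your computation $\frac{4a^2}{1+2a}\geq 1+a$ for $a\geq 2$ is correct), and your serial-law lower bound $\el(\alpha_4,\h_{\log(1+a)}\Phi_a)\geq a(1+a)^2+(1+a)$ is essentially the paper's Lemma on $\el(\alpha_4,\g_t\Phi_a)\geq e^{2t}\left(\frac{1}{1+a}+a\right)$, just written with $K=(1+a)^2$ substituted.

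However, there is a genuine gap at precisely the step you flag as ``the heart of the matter'': the upper bound on $\el(\alpha_1,\g_t\Phi_a)$ at $t=\log(1+a)$ is never actually established. Your plan --- build a ``fat elbow'' quadrilateral homotopic to $\alpha_1$ and bound its modulus --- faces the difficulty that serial decomposition of a quadrilateral only bounds its extremal length from \emph{below}, so estimating the elbow's modulus from above would itself require either an explicit conformal map or a duality argument; you have not carried this out, and you correctly observe that the required bound is sharp (the target $a(1+a)^2+(1+a)$ coincides exactly with your lower bound for $\el(\alpha_4)$, so there is no slack to absorb a multiplicative error). The paper closes this gap with a short duality argument (its Lemma \ref{lem:upperalpha1}): letting $\Gamma$ be the family of all essential arcs crossing every representative of $\alpha_1$ (as a set, $\Gamma=\alpha_2\cup\alpha_5$), one has $\el(\alpha_1,\h_t\Phi_a)=1/\el(\Gamma,\h_t\Phi_a)$, and the conformal metric equal to $|dz|$ on $\{\re z>K-1\}\cap\h_t\Phi_a$ and $0$ elsewhere gives every arc of $\Gamma$ length at least $1$ while having area $1+a+Ka$; hence $\el(\Gamma)\geq(1+a+Ka)^{-1}$ and $\el(\alpha_1,\g_t\Phi_a)\leq 1+a+e^{2t}a$, which at $t=\log(1+a)$ is exactly $(1+a)+a(1+a)^2\leq\el(\alpha_4,\g_t\Phi_a)$. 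If you replace your unproved elbow estimate with this duality computation, your argument becomes a complete proof identical in substance to the paper's.
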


We break down the proof into several lemmata. The main idea is that at $t=0$ we have $\el(\alpha_1, \g_t\Phi_a) \geq \el(\alpha_4, \g_t\Phi_a)$ while the inequality is reversed at $t=\log(1+a)$. By the intermediate value theorem, equality occurs somewhere in between.

\begin{lem}
For every $a>0$, we have 
$$
\el(\alpha_1, \Phi_a)  \geq \frac{4a^2}{1 + 2a}.
$$
\end{lem}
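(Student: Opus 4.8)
The plan is to use the first (metric) definition of extremal length with the single test metric $\rho = |dz|$, the Euclidean metric that $\Phi_a$ inherits as a subset of $\CC$. Since $\Phi_a$ is the square $[0,1+a]^2$ with the corner square $[1,1+a]^2$ deleted, we have $\area(\rho) = \area(\Phi_a) = (1+a)^2 - a^2 = 1+2a$, which is exactly the denominator in the statement. So everything reduces to showing that $\ell_\rho[\alpha_1] \geq 2a$, i.e.\ that every rectifiable arc on $\Phi_a$ in the isotopy class of $\alpha_1$ has Euclidean length at least $2a$; then $\el(\alpha_1, \Phi_a) \geq (\ell_\rho[\alpha_1])^2 / \area(\rho) \geq 4a^2/(1+2a)$.

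To bound the length of such an arc $\gamma$, I would first locate its endpoints. By definition $\alpha_1$ separates $x_3$ and $x_4$ from $x_1, x_2, x_5$ along the boundary, and $x_3, x_4$ are consecutive marked points of $\Phi_a$; hence one endpoint of $\gamma$ lies on the open right edge $\{1+a\}\times(0,1)$ (the side between $x_2$ and $x_3$) and the other on the open top edge $(0,1)\times\{1+a\}$ (the side between $x_4$ and $x_5$). Parametrize $\gamma$ by $[0,1]$ with $\gamma(t) = (x(t), y(t))$, $x(0) = 1+a$, $y(1) = 1+a$, and $\gamma([0,1]) \subseteq \Phi_a$.

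The key point is that the deleted corner square forces the horizontal descent of $x$ and the vertical ascent of $y$ to happen over disjoint parameter intervals. Let $t_1 = \inf\{t : x(t) = 1\}$, which exists because $x(1) < 1$. On $[0, t_1)$ we have $x(t) > 1$, and any point of $\Phi_a$ with first coordinate $> 1$ lies in the horizontal leg $(1, 1+a]\times[0,1]$, so $y(t) \leq 1$ there, and hence $y(t_1) \leq 1$ by continuity. On $[0, t_1]$ the first coordinate drops from $1+a$ to $1$, so $\gamma$ restricted to $[0,t_1]$ has length at least $a$; on $[t_1, 1]$ the second coordinate rises from $y(t_1)\leq 1$ to $1+a$, so $\gamma$ restricted to $[t_1,1]$ also has length at least $a$. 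Summing, $\gamma$ has length at least $2a$, as needed. The only step requiring care is this separation of the two displacements: one must invoke the geometry of the notch (that first coordinate $>1$ forces second coordinate $\leq 1$), since a naive estimate allowing $x$ and $y$ to change simultaneously along a diagonal would only yield $a\sqrt{2}$.
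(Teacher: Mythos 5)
Your proof is correct and follows the same route as the paper: the paper's proof is exactly ``use the first definition of extremal length with the Euclidean metric on $\Phi_a$,'' yielding $\el(\alpha_1,\Phi_a)\geq (2a)^2/(1+2a)$. You have simply supplied the (omitted but necessary) justification that every representative of $\alpha_1$ has Euclidean length at least $2a$, and your argument for that step --- separating the horizontal descent past the notch from the vertical ascent --- is valid.
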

\begin{proof}
Use the first definition of extremal length with the Euclidean metric on $\Phi_a$ (see the proof of Proposition \ref{prop:explicit}).
\end{proof}

\begin{lem}
For every $a>0$, we have 
$$
\el(\alpha_4, \Phi_a)  \leq 1+a.
$$
\end{lem}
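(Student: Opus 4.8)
The plan is to use the second definition of extremal length, which gives $\el(\alpha_4,\Phi_a)\le \el(R)$ for \emph{any} single rectangle $R$ embedded conformally in $\Phi_a$ whose core curves lie in the homotopy class $\alpha_4$. So it suffices to produce one such rectangle and read off its modulus.

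First I would pin down where the class $\alpha_4$ sits on the $L$-shape. Label the marked points of $\Phi_a$ as $x_1=0$, $x_2=1+a$, $x_3=(1+a)+i$, $x_4=1+(1+a)i$, $x_5=(1+a)i$, the reflex corner $1+i$ being unmarked. Then $\alpha_4$ is the arc separating $\{x_3,x_4,x_5\}$ from $\{x_1,x_2\}$, so its endpoints lie on the edge $\{1+a\}\times[0,1]$ (between $x_2$ and $x_3$) and on the edge $\{0\}\times[0,1+a]$ (between $x_5$ and $x_1$). Now take $R$ to be the horizontal leg of the $L$, i.e.\ the Euclidean rectangle $[0,1+a]\times[0,1]$; it is a sub-rectangle of $\Phi_a$ on which the flat structure of $\Phi_a$ restricts to the Euclidean one, so the inclusion $R\hookrightarrow\Phi_a$ is a conformal embedding. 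A horizontal crossing segment $[0,1+a]\times\{y\}$ with $0<y<1$ splits $\Phi_a$ into a lower piece containing $x_1,x_2$ and an upper piece containing $x_3,x_4,x_5$; hence these segments, which are exactly the core curves of $R$ joining its two sides of length $1$, represent $\alpha_4$.

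Finally, the extremal length across $R$ in the direction of those curves is its length divided by its height, namely $(1+a)/1=1+a$, so the second definition of extremal length gives $\el(\alpha_4,\Phi_a)\le\el(R)=1+a$, as claimed. The only step that calls for a moment's care is the combinatorial one: checking that it is the horizontal leg of the $L$ — rather than the vertical leg or the quarter-annulus used in Proposition \ref{prop:explicit} — whose crossing curves carry $\alpha_4$, i.e.\ that a horizontal crossing curve separates the five marked points in the prescribed way. Once that identification is made, the estimate is immediate.
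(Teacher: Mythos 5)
Your proof is correct and takes essentially the same approach as the paper: exhibit one conformally embedded rectangle whose crossing curves represent $\alpha_4$ and invoke the second definition of extremal length. Your choice of rectangle, the full horizontal leg $[0,1+a]\times[0,1]$ with modulus $(1+a)/1$, is in fact the right one --- the paper's one-line proof cites a rectangle of ``length $1+a$ and height $a$'', which would only give the bound $(1+a)/a$ --- and your careful check that the horizontal crossing curves separate $\{x_3,x_4,x_5\}$ from $\{x_1,x_2\}$ is exactly the point that needs verifying.
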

\begin{proof}
There is a horizontal rectangle of length $1+a$ and height $a$ embedded in the homotopy class of $\alpha_4$.
\end{proof}

\begin{cor} \label{cor:ineq}
If $a\geq \frac{3+\sqrt{17}}{4}$, then $\el(\alpha_1, \Phi_a) \geq \el(\alpha_4, \Phi_a)$.
\end{cor}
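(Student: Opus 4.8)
The plan is to simply chain the two preceding lemmas and reduce to an elementary polynomial inequality. Combining $\el(\alpha_1,\Phi_a) \geq \frac{4a^2}{1+2a}$ with $\el(\alpha_4,\Phi_a) \leq 1+a$, it suffices to establish that
$$
\frac{4a^2}{1+2a} \geq 1+a
$$
for all $a \geq \frac{3+\sqrt{17}}{4}$, since then $\el(\alpha_1,\Phi_a) \geq \frac{4a^2}{1+2a} \geq 1+a \geq \el(\alpha_4,\Phi_a)$.

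Since $a > 0$ we have $1+2a > 0$, so the displayed inequality is equivalent (after clearing the denominator) to $4a^2 \geq (1+a)(1+2a) = 2a^2 + 3a + 1$, i.e. to $2a^2 - 3a - 1 \geq 0$. First I would locate the roots of $2a^2 - 3a - 1$ via the quadratic formula: they are $a = \frac{3 \pm \sqrt{9+8}}{4} = \frac{3 \pm \sqrt{17}}{4}$. Since the leading coefficient is positive, the quadratic is nonnegative exactly for $a \leq \frac{3-\sqrt{17}}{4}$ or $a \geq \frac{3+\sqrt{17}}{4}$; the hypothesis places $a$ in the second range, so $2a^2 - 3a - 1 \geq 0$ and we are done.

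There is essentially no obstacle here: the content is entirely in the two lemmas already proved, and this corollary is just bookkeeping to extract the threshold value $\frac{3+\sqrt{17}}{4}$ of $a$ at which the Euclidean-metric lower bound for $\el(\alpha_1,\Phi_a)$ overtakes the embedded-rectangle upper bound for $\el(\alpha_4,\Phi_a)$. The only mild care needed is to note that $\frac{3+\sqrt{17}}{4} > 0$, so that the sign manipulation clearing $1+2a$ is legitimate, and to record that this constant is slightly less than $2$, which is why the cleaner hypothesis $a \geq 2$ can be used in Proposition \ref{prop:intersection}.
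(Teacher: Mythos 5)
Your proof is correct and is exactly the paper's argument: combine the lower bound $\el(\alpha_1,\Phi_a)\geq 4a^2/(1+2a)$ and the upper bound $\el(\alpha_4,\Phi_a)\leq 1+a$ from the two preceding lemmas, then verify the resulting polynomial inequality $2a^2-3a-1\geq 0$ at the stated threshold. The paper merely leaves the quadratic-formula computation implicit; you have spelled it out.
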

\begin{proof}
The condition implies  $$\frac{4a^2}{1 + 2a} \geq 1+a.$$ The conclusion follows from the previous lemmata. 
\end{proof}

\begin{lem} \label{lem:upperalpha1}
For every $a>0$ and $t>0$, we have 
$$
\el(\alpha_1, \g_t \Phi_a)  \leq 1+a+e^{2t}a.
$$
\end{lem}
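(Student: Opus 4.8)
The plan is to place inside $\g_t\Phi_a$ an explicit planar region of Euclidean area exactly $1+a+e^{2t}a$ carrying an arc in the class $\alpha_1$, and to extract the bound from the Euclidean metric on that region via the conjugate arc family.

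First I would pass to the conformally equivalent rectangular structure $\h_t\Phi_a$. Writing $b=e^{2t}>1$, this is the $L$-shape $L$ with horizontal leg $[0,b(1+a)]\times[0,1]$, vertical leg $[0,b]\times[0,1+a]$, reflex corner at $(b,1)$, and marked points $x_1=0$, $x_2=b(1+a)$, $x_3=b(1+a)+i$, $x_4=b+(1+a)i$, $x_5=(1+a)i$. In this model $\alpha_1$ is the class of simple arcs from $[x_2,x_3]=\{b(1+a)\}\times[0,1]$ to $[x_4,x_5]=[0,b]\times\{1+a\}$, and its conjugate arc $\beta$ joins the concave boundary $[x_3,x_4]=\big([b,b(1+a)]\times\{1\}\big)\cup\big(\{b\}\times[1,1+a]\big)$ to the far boundary $[x_5x_1x_2]=\big(\{0\}\times[0,1+a]\big)\cup\big([0,b(1+a)]\times\{0\}\big)$. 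Since $\alpha_1$ and $\beta$ are the two families of crossing arcs of the quadrilateral $(L;x_2,x_3,x_4,x_5)$, one has $\el(\alpha_1,L)=\el(\beta,L)^{-1}$, so it suffices to prove $\el(\beta,L)\ge(1+a+ba)^{-1}$.

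The region I would use is the sub-$L$-shape
\[
W=\big([b-1,\,b(1+a)]\times[0,1]\big)\;\cup\;\big([b-1,\,b]\times[0,\,1+a]\big)\subseteq L,
\]
whose horizontal leg has height $1$ and whose vertical leg has width $1$, so that $\area(W)=(ba+1)+a=1+a+ba$. Taking $\rho$ to be the Euclidean length element on $W$ and $0$ on $L\setminus W$, the first definition of extremal length gives $\el(\beta,L)\ge \ell_\rho(\beta)^2/\area(W)$, and the proof reduces to showing $\ell_\rho(\beta)\ge1$, i.e. that every simple arc $\gamma$ in $L$ from $[x_3,x_4]$ to $[x_5x_1x_2]$ meets $W$ in a set of Euclidean length at least $1$. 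For this one notes that $[x_3,x_4]\subseteq\partial W$ borders only the interior of $W$ on the $L$-side, so $\gamma$ enters $W$; that the only part of $\partial W$ lying in the interior of $L$ is the segment $\{b-1\}\times[0,1+a]$; and that $W\subseteq\{x\ge b-1\}\cap\{0\le y\le1+a\}$. Hence $\gamma$ either stays in $W$ until it reaches $[x_5x_1x_2]$, in which case it drops from $\{y\ge1\}$ to $\{y=0\}$ within $W$ and so has length $\ge1$, or it first crosses $\{b-1\}\times[0,1+a]$, in which case its initial subarc inside $W$ runs from $\{x\ge b\}$ to $\{x=b-1\}$ and so has length $\ge1$. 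This gives $\el(\alpha_1,\g_t\Phi_a)\le1+a+ba=1+a+e^{2t}a$.

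I expect the last planar verification to require the most care: both legs of $W$ must have width exactly $1$ (this is what pins down the constants, forces $b\ge1$, and uses $t>0$), since otherwise the conjugate arcs could have Euclidean length less than $1$, and one must rule out an arc of class $\beta$ slipping out of $W$ across a cheap part of its boundary. An essentially equivalent but perhaps cleaner variant avoids reasoning about arcs in all of $L$: the inclusion $W\hookrightarrow L$ carries the crossing family of the $L$-shape $W$ (joining $\{b(1+a)\}\times[0,1]$ to $[b-1,b]\times\{1+a\}$) into the homotopy class $\alpha_1$, so by monotonicity of extremal length $\el(\alpha_1,L)$ is at most the extremal length of that family in $W$, and the same Euclidean/conjugate computation carried out inside $W$ bounds it by $\area(W)=1+a+e^{2t}a$; there the only thing to check is that the distance in $W$ from its concave boundary to its far boundary equals $1$.
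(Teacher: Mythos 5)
Your proof is correct and is essentially the paper's own argument: the paper likewise passes to $\h_t\Phi_a$, invokes duality with the conjugate family (which it describes as $\Gamma=\alpha_2\cup\alpha_5$, the same arcs as your $\beta$), and uses the Euclidean metric restricted to exactly your region $W=\h_t\Phi_a\cap\{x\ge e^{2t}-1\}$ of area $1+a+e^{2t}a$, across which every conjugate arc has length at least $1$. Your write-up only adds a more detailed verification of that last distance estimate.
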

\begin{proof}
Let $K = e^{2t}$. Let $\Gamma$ be the family of all essential arcs in $\pentagon$ which intersect every representative of $\alpha_1$. As a set we have $\Gamma = \alpha_2 \cup \alpha_5$. This should not be confused with $\alpha_2 + \alpha_5$: each element of $\Gamma$ is a single arc, not a multiarc. By duality of extremal length for rectangles, 
$$\el(\alpha_1, \g_t \Phi_a) = \frac{1}{ \el(\Gamma, \g_t \Phi_a)} = \frac{1}{ \el(\Gamma, \h_t \Phi_a)}. $$

Consider the metric $\rho$ which is defined to be $|dz|$ at points in $\h_t \Phi_a$ with real part bigger than $(K-1)$ and $0$ elsewhere. In other words, $\rho$ is the Euclidean metric on $\h_t \Phi_a$ but with a $(K-1)\times (1+a)$ rectangle cut off on the left. The distance across the leftover region (from the two upper-right sides to the two lower-left sides) is at least $1$, while its area is equal to $1+a+Ka$. This shows that

$$
\el(\Gamma, \h_t \Phi_a) \geq \frac{1}{1+a+Ka}
$$
from which the conclusion follows.
\end{proof}

\begin{figure}[htp] 
\includegraphics[scale=.8]{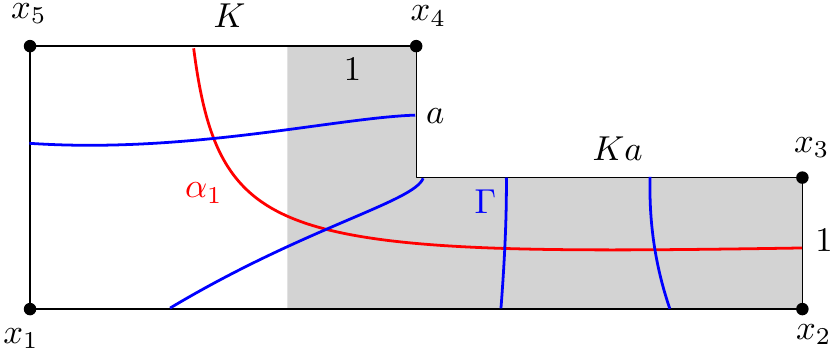}
\caption{The conformal metric $\rho$ in the proof of Lemma \ref{lem:upperalpha1} is equal to the Euclidean metric on the shaded region and zero elsewhere. Every arc in the family $\Gamma$ has length at least $1$ with respect to $\rho$. The extremal length of $\alpha_1$ is the reciprocal of the extremal length of $\Gamma$.}
\end{figure}

\begin{lem}
For every $a>0$ and $t>0$, we have 
$$
\el(\alpha_4, \g_t \Phi_a)  \geq e^{2t}\left(\frac{1}{1+a}+ a\right).
$$
\end{lem}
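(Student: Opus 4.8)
The plan is to realize $\g_t\Phi_a$ as an explicit $L$-shaped polygon and to bound $\el(\alpha_4,\cdot)$ from below by exhibiting two rectangles with disjoint interiors that every representative of $\alpha_4$ is forced to cross one after the other. Applying $\g_t$ to the $L$-shape $\Phi_a$ produces the polygon consisting of a horizontal rectangle $[0,e^t(1+a)]\times[0,e^{-t}]$ glued along the square $[0,e^t]\times[0,e^{-t}]$ to a vertical rectangle $[0,e^t]\times[0,e^{-t}(1+a)]$. In the labelling inherited from $\Phi_a$, the boundary arc $[x_2,x_3]$ is the right edge $\{e^t(1+a)\}\times[0,e^{-t}]$ of the horizontal bar, while the boundary arc $[x_5,x_1]$ is the left edge $\{0\}\times[0,e^{-t}(1+a)]$ of the vertical bar. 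Since $\alpha_4$ is the essential arc separating the vertices $x_3,x_4,x_5$ from $x_1,x_2$, it is represented by arcs joining $[x_2,x_3]$ to $[x_5,x_1]$. I would then set $R_1=[e^t,e^t(1+a)]\times[0,e^{-t}]$ (the part of the horizontal bar to the right of the overlap square) and $R_2=[0,e^t]\times[0,e^{-t}(1+a)]$ (the whole vertical bar); these have disjoint interiors, and the extremal lengths of their horizontal crossing families are $\frac{e^t a}{e^{-t}}=e^{2t}a$ and $\frac{e^t}{e^{-t}(1+a)}=\frac{e^{2t}}{1+a}$ respectively, since the extremal length across a Euclidean rectangle is its length divided by its height.

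The crux is to check that every arc $\gamma$ representing $\alpha_4$ contains a subarc crossing $R_1$ from $\{e^t(1+a)\}\times[0,e^{-t}]$ to $\{e^t\}\times[0,e^{-t}]$ together with a disjoint subarc crossing $R_2$ from $\{e^t\}\times[0,e^{-t}(1+a)]$ to $\{0\}\times[0,e^{-t}(1+a)]$. As $\gamma$ runs from $\{\re\gamma=e^t(1+a)\}$ to $\{\re\gamma=0\}$, I would take for the first subarc the portion of $\gamma$ up to the first time $\re\gamma$ reaches $e^t$: it stays in $\{\re\gamma\ge e^t\}$, which inside the $L$-shape lies in $\overline{R_1}$, and it joins the two vertical sides of $R_1$. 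For the second subarc I would take the portion of $\gamma$ after the last time $\re\gamma$ equals $e^t$: it stays in $\{\re\gamma\le e^t\}$, whose cross-section inside the $L$-shape is exactly the segment of height $e^{-t}(1+a)$, so it lies in $\overline{R_2}$ and joins the two vertical sides of $R_2$. Granting this, the serial rule (composition law) for extremal length gives
$$
\el(\alpha_4,\g_t\Phi_a)\ \ge\ e^{2t}a+\frac{e^{2t}}{1+a}\ =\ e^{2t}\!\left(\frac{1}{1+a}+a\right),
$$
which is exactly the claim. Alternatively, one can avoid invoking the composition law and simply feed into the first definition of extremal length the conformal metric $\rho$ equal to $(1+a)\,|dz|$ on $R_1$, to $|dz|$ on $R_2$, and to $0$ elsewhere: the mandatory crossings give $\ell_\rho[\alpha_4]\ge e^t(1+a+a^2)$, while $\area(\rho)=(1+a)(1+a+a^2)$, and the ratio is again $e^{2t}(1+a+a^2)/(1+a)=e^{2t}(\frac{1}{1+a}+a)$.

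The only point requiring real care is the verification in the middle step: a representative of $\alpha_4$ can oscillate across the seam $\{e^t\}\times[0,e^{-t}]$ where $R_1$ and $R_2$ meet, so one genuinely needs the first and last passage times of $\re\gamma$ at the level $e^t$ — together with a check that the resulting subarcs stay inside $\overline{R_1}$ and $\overline{R_2}$ and connect the appropriate sides — before the serial rule applies. The remaining ingredients, namely the conformal description of $\g_t\Phi_a$ as an $L$-shape and the extremal length of a rectangle's crossing family, are routine.
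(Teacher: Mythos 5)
Your proposal is correct and takes essentially the same route as the paper: the paper's proof also splits the $L$-shape into the vertical bar and the remaining part of the horizontal bar, notes that every representative of $\alpha_4$ must cross both, and applies the series law via an explicit conformal metric that is a different constant on each piece (the paper works with the conformally equivalent rescaling $\h_t\Phi_a$ and the metric $|dz|/(1+a)$ on one piece and $|dz|$ on the other, which is proportional to your alternative metric). Your extra care about oscillation across the seam, via first and last passage times of $\re\gamma$ at the level $e^t$, is a correct justification of the crossing claim that the paper leaves implicit.
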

\begin{proof}
Let $K = e^{2t}$. Consider the metric $\rho$ on  $\h_t \Phi_a$ which is equal to $|dz|/(1+a)$ on $[0,K]\times [0,1+a]$ and $|dz|$ on $(K,K(1+a)] \times [0,1]$. This choice comes from the series law for extremal length: $\alpha_4$ crosses the previous two rectangles, hence its extremal length is at least the sum of theirs. Indeed, $\rho$ has area $K\left(\frac{1}{1+a}+ a\right) $ and the $\rho$-length of any arc $\gamma$ homotopic to $\alpha_4$ is at least 
$K\left(\frac{1}{1+a}+ a\right) $. Thus
\begin{equation*}
\el(\alpha_4, \g_t \Phi_a) = \el(\alpha_4, \h_t \Phi_a) \geq \frac{K^2\left( \frac{1}{1+a} + a \right)^2}{K\left(\frac{1}{1+a}+ a\right)} = K\left(\frac{1}{1+a}+ a\right). \qedhere
\end{equation*}
\end{proof}

\begin{cor} \label{cor:reverse}
If $a>0$ and $t \geq \log(1+a)$, then $\el(\alpha_1, \g_t \Phi_a) \leq \el(\alpha_4, \g_t \Phi_a)$.
\end{cor}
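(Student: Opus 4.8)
The plan is to combine the two lemmata immediately preceding the corollary, which sandwich the two extremal lengths in question. From Lemma~\ref{lem:upperalpha1} we have the upper bound $\el(\alpha_1, \g_t \Phi_a) \leq 1+a+e^{2t}a$, and from the lemma just above we have the lower bound $\el(\alpha_4, \g_t \Phi_a) \geq e^{2t}\bigl(\tfrac{1}{1+a}+a\bigr)$. Both are valid for all $t>0$, and since $a>0$ forces $\log(1+a)>0$, the hypothesis $t \geq \log(1+a)$ keeps us in that range. It therefore suffices to check the purely arithmetic inequality
\[
1+a+e^{2t}a \;\leq\; e^{2t}\left(\frac{1}{1+a}+a\right)
\]
whenever $t \geq \log(1+a)$.

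To do this, set $K = e^{2t}$, so that the hypothesis becomes $K \geq (1+a)^2$. Cancelling the common term $Ka$ from both sides, the inequality above reduces to $1+a \leq \tfrac{K}{1+a}$, i.e.\ to $(1+a)^2 \leq K$, which is precisely the hypothesis. Hence the chain
\[
\el(\alpha_1, \g_t \Phi_a) \;\leq\; 1+a+Ka \;\leq\; \frac{K}{1+a}+Ka \;=\; e^{2t}\left(\frac{1}{1+a}+a\right) \;\leq\; \el(\alpha_4, \g_t \Phi_a)
\]
gives the claim.

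I do not expect any real obstacle: the corollary is an immediate arithmetic consequence of the two bounds, and the threshold $t = \log(1+a)$ was evidently chosen so that exactly this cancellation works. The only points worth a second look are that the two lemmata genuinely apply at the endpoint (they are stated for $t>0$, and $t = \log(1+a) > 0$), and that the limiting case $(1+a)^2 = e^{2t}$ still yields the asserted non-strict inequality — which it does, since every step in the chain is non-strict.
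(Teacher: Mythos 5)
Your proof is correct and follows exactly the paper's argument: the paper likewise deduces the corollary from the two preceding lemmata by verifying that $t \geq \log(1+a)$ gives $1+a+e^{2t}a \leq e^{2t}\bigl(\tfrac{1}{1+a}+a\bigr)$. Your explicit arithmetic (cancelling $Ka$ and reducing to $(1+a)^2 \leq K$) just fills in the computation the paper leaves to the reader.
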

\begin{proof}
The condition on $t$ implies that $$1+a+e^{2t}a \leq e^{2t}\left(\frac{1}{1+a}+ a\right)$$ which gives the desired result in view of the preceding lemmata. 
\end{proof}

As indicated earlier, Proposition \ref{prop:intersection} follows from Lemma \ref{lem:equalissym}, Corollary \ref{cor:ineq}, Corollary \ref{cor:reverse} and the intermediate value theorem. By symmetry, $\eta_a$ also intersects $\gamma_2=\sigma_1(\gamma_5)$ provided that $a\geq 2$. Therefore the convex set $Q_a$ coincides with $P_t$ for some $t>0$, and this concludes the proof of Theorem \ref{thm:exhaustionpent}.

\subsection{Inner and outer radii}

It follows from Proposition \ref{prop:intersection} that the penta\-gon $Q_a$ has pe\-ri\-me\-ter at most $10 \log(1+a)$. We also want to estimate the inner and outer radii of $Q_a$ with respect to the origin.

\begin{lem} \label{lem:inner}
There exists a constant $C_1>0$ such that for every $a>0$, the pentagon $Q_a$ contains a ball of radius  $\frac{1}{2}\log a - C_1$ around the origin.
\end{lem}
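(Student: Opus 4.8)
The plan is to show that the distance from the origin to each of the five sides $\sigma_j(\eta_a)$ of $Q_a$ is at least $\frac{1}{2}\log a - C_1$. By the $D_5$-symmetry of $Q_a$ it suffices to bound the distance from the origin to $\eta_a$ itself. I would use Kerckhoff's formula: for any $X \in \eta_a$,
$$
d(\origin, X) \geq \frac{1}{2} \log \frac{\el(\alpha_1, X)}{\el(\alpha_1, \origin)}
= \frac{1}{2}\log \el(\alpha_1,X) - \frac{1}{2}\log c_0,
$$
where $c_0 = \el(\alpha_1,\origin)$ is a fixed constant. So the whole problem reduces to showing $\el(\alpha_1, X) \gtrsim a$ for every $X$ on the geodesic $\eta_a = \{\g_t \Phi_a : t \in \RR\}$, uniformly in $t$.

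First I would reduce to $t \geq 0$: by the symmetry $R(\g_t\Phi_a) = \g_{-t}\Phi_a$ established before Proposition \ref{prop:explicit}, and since $R$ acts as $\sigma_1$ which fixes $\alpha_1$, we have $\el(\alpha_1, \g_t\Phi_a) = \el(\alpha_1, \g_{-t}\Phi_a)$, so it is enough to treat $t \geq 0$. For $t = 0$ we already have $\el(\alpha_1,\Phi_a) \geq \frac{4a^2}{1+2a} \geq a$ from the Euclidean-metric lower bound computed just before Corollary \ref{cor:ineq}. For general $t \geq 0$ I would write $\g_t\Phi_a$ conformally as $\h_t\Phi_a$ (the rectangle $[0,e^{2t}(1+a)]\times[0,1+a]$ with an $[0,e^{2t}]\times[0,1]$ notch in the lower-left corner, scaling up the long leg) and produce a lower bound for $\el(\alpha_1,\cdot)$ by exhibiting a metric: take $\rho = |dz|$ on the sub-rectangle of $\h_t\Phi_a$ lying over horizontal coordinates in $[e^{2t}-1, e^{2t}+\text{something}]$ — i.e.\ a vertical strip of width comparable to the notch — and $0$ elsewhere. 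Every arc homotopic to $\alpha_1$ must cross this strip vertically near the notch, traveling a vertical distance at least $a$ (it has to get from below the notch to above it, a height of roughly $a$ in these coordinates), so $\ell_\rho[\alpha_1] \gtrsim a$, while $\area(\rho) \lesssim a$ (width $O(1)$ times height $O(a)$), giving $\el(\alpha_1,\h_t\Phi_a) \gtrsim a$. The absolute constant absorbed here, together with $\frac{1}{2}\log c_0$, becomes $C_1$.

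The main obstacle is choosing the test metric so that the lower bound is \emph{uniform in $t$}: as $t \to \infty$ the $L$-shape $\h_t\Phi_a$ gets very long horizontally, and one must confine $\rho$ to a bounded-width window around the inner corner of the notch so that the area does not blow up, while still guaranteeing that the window genuinely obstructs every representative of $\alpha_1$ (the arc $\alpha_1$ is the one "pinched" along $\gamma_1$, separating the origin-corner from the opposite corner, so it must pass through the concave region of the $L$). Pinning down exactly which vertical strip works for all $t \geq 0$ simultaneously — essentially, a fixed neighborhood of the reflex vertex in natural coordinates — is the crux; once the strip is chosen the length and area estimates are immediate, and the constant $C_1$ can be written down explicitly if desired.
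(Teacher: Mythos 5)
Your high-level plan is exactly the paper's: reduce by $D_5$-symmetry to bounding $d(\origin,\eta_a)$ from below, apply Kerckhoff's formula with the arc $\alpha_1$, and boil everything down to the single estimate $\el(\alpha_1,\g_t\Phi_a)\gtrsim a$ uniformly in $t$ (the reduction to $t\geq 0$ via the reflection $R$ is also fine, and the $t=0$ case is correct). The gap is in the step you yourself flag as the crux: the test metric you propose does not give the length bound you claim. First, a slip that feeds the confusion: $\h_t\Phi_a$ is $[0,K(1+a)]\times[0,1]\cup[0,K]\times[0,1+a]$ with $K=e^{2t}$, so the missing rectangle is $[K,K(1+a)]\times[1,1+a]$ in the \emph{upper right}, not a notch at the lower-left corner. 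More importantly, for $\rho=|dz|$ on a vertical strip $\{K-1\leq x\leq K+O(1)\}$ the claim $\ell_\rho[\alpha_1]\gtrsim a$ is false. A representative of $\alpha_1$ runs from the far-right edge $\{K(1+a)\}\times[0,1]$ to the top edge $[0,K]\times\{1+a\}$; it can cross your strip essentially horizontally at height $y\approx 1/2$, picking up $\rho$-length $O(1)$, and then do all of its vertical climbing to $y=1+a$ in the region $x<K-1$, where $\rho=0$, because the top edge extends all the way to $x=0$. No bounded-width vertical strip around the reflex corner forces vertical travel of order $a$ inside it, so your estimate degenerates to $\el(\alpha_1,\h_t\Phi_a)\gtrsim 1/a$, which is useless.

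The fix is to not localize at all, which is what the paper does: take $\rho$ to be the full Euclidean metric on $\h_t\Phi_a$. Every representative of $\alpha_1$ has horizontal displacement at least $Ka$ and vertical displacement at least $a$, so $\ell_\rho[\alpha_1]\geq (K+1)a\geq 2\sqrt{K}\,a$, while $\area(\rho)=K(1+2a)$; the factors of $K$ cancel and one gets $\el(\alpha_1,\g_t\Phi_a)\geq 4a^2/(1+2a)\geq \tfrac{4}{3}a$ for $a\geq 1$. Your fear that the area blows up as $t\to\infty$ is unfounded precisely because the required length grows at the same rate. (If you do want a localized metric, the correct choice is the Euclidean metric on the far leg $[K,K(1+a)]\times[0,1]$ alone: every representative must cross it horizontally, giving length at least $Ka$ against area $Ka$, hence $\el\geq Ka\geq a$. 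A vertical strip near the reflex corner is the one localization that cannot work.) Note also that the paper only needs the bound for $t\in[0,\log(1+a)]$, the range supplied by Proposition \ref{prop:intersection}, although the estimate does hold for all $t\geq 0$.
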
 

\begin{proof}
Denote the origin by $\origin$. By taking $C_1$ larger than $\frac{1}{2}\log 2$, we may assume that $a\geq 2$. In view of Proposition \ref{prop:intersection}, it suffices to show that $d(\origin, \g_t \Phi_a) \geq \frac{1}{2}\log a - C_1$ for every $t \in [0,\log(1+a)]$. By Kerckhoff's formula (\ref{eqn:kerck}) we have
$$
d(\origin, \g_t \Phi_a) \geq \frac{1}{2}\log \frac{\el(\alpha_1,\g_t \Phi_a)}{\el(\alpha_1,\origin)}.
$$ 
Let $K=e^2t$. Using the Euclidean metric on $\h_t \Phi_a$, we estimate
$$
\el(\alpha_1,\g_t \Phi_a) = \el(\alpha_1,\h_t \Phi_a) \geq \frac{(a+Ka)^2}{K(1+2a)} = \frac{(K+1)^2 a^2}{K(1+2a)} \geq \frac{4a^2}{1+2a} \geq \frac{4}{3}a
$$
where we used the inequalities $(K+1)^2 \geq 4K$ and $3a \geq 1+2a$. The result follows by taking 
\begin{equation*}C_1 \geq  \frac{1}{2}\log \frac{3 \el(\alpha_1,\origin)}{4}. \qedhere\end{equation*}

\end{proof}

\begin{lem} \label{lem:outer}
There exists a constant $C_2>0$ such that for every $a>0$, the pentagon $Q_a$ is contained in a ball of radius $\log a + C_2$ around the origin.
\end{lem}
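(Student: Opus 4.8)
The plan is to bound $d(\origin, X)$ from above for every $X \in Q_a$ using Kerckhoff's formula~(\ref{eqn:kerck}), by showing that for \emph{every} projective measured foliation $\overline{F}$, the ratio $\el(F, X)/\el(F, \origin)$ is controlled by $a^2$ up to a multiplicative constant. Since $\pmf(\pentagon)$ is a pentagon (a compact space) and every $F$ is a weighted multiarc, it suffices to check the five vertices $\overline{\alpha_j}$ and interpolate; more precisely, extremal length of a weighted multiarc $s\alpha_j + t\alpha_k$ behaves controllably in terms of $\el(\alpha_j, X)$ and $\el(\alpha_k, X)$ (for instance by the definition via embedded rectangles and the series/parallel laws, one gets two-sided bounds that reduce the supremum over $\pmf$ to a comparison of the five numbers $\el(\alpha_j, X)$). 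So the crux is: \emph{for every $X \in Q_a$ and every $j$, one has $\el(\alpha_j, X) \leq c\, a^2 \cdot \el(\alpha_j, \origin)$ for some universal constant $c$}, and symmetrically $\el(\alpha_j, X) \geq c^{-1} a^{-2}\,\el(\alpha_j,\origin)$; taking $\frac12\log$ then yields $d(\origin, X) \leq \log a + C_2$.

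To get the upper bound on $\el(\alpha_j, X)$ for $X \in Q_a$, I would use the definition $Q_a = \bigcap_j \sigma_j(U_a)$ together with the geometry of the half-planes. The key observation is that $X \in Q_a$ lies in $\sigma_k(U_a)$ for each $k$, and $\sigma_k(U_a)$ is the half-plane bounded by $\sigma_k(\eta_a)$ on the side of $\sigma_k(\gamma_1^+) = \gamma_k^{\pm}$; being on the ``inside'' of the symmetric geodesic $\sigma_k(\eta_a)$ should force $\el(\alpha_j, X)$ not to be too large, because crossing $\sigma_k(\eta_a)$ toward its far end is precisely where the relevant arc gets pinched. Concretely, along $\eta_a$ the extremal length $\el(\alpha_1, \g_t\Phi_a)$ is bounded above by $1 + a + e^{2t}a$ (Lemma~\ref{lem:upperalpha1}) for $t \geq 0$ and stays comparably bounded for $t \leq 0$ by the mirror estimate; at the ``turning point'' $t$ where $\g_t\Phi_a \in \gamma_k$ (which happens for $t \in [0, \log(1+a)]$ by Proposition~\ref{prop:intersection}), the extremal length is $O(a^2)$. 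Since $Q_a$ is the convex region cut out on the origin side of these geodesics and extremal length of a fixed multiarc is a (log-)convex function along Teichm\"uller geodesics, its maximum over $Q_a$ is attained on the boundary, i.e.\ on one of the arcs $\sigma_k(\eta_a)$ restricted to the segment between consecutive vertices of $Q_a$ --- exactly the portion with $t$ in a bounded interval comparable to $[0,\log(1+a)]$, where the estimate $\el(\alpha_j, \cdot) = O(a^2)$ holds. The lower bound $\el(\alpha_j, X) \geq c^{-1}a^{-2}\el(\alpha_j,\origin)$ follows from the upper bound applied to the ``dual'' arc via the duality $\el(\alpha_j, X)\cdot \el(\Gamma_j, X) = 1$ for the crossing family $\Gamma_j$, or simply by symmetry of $Q_a$ under $D_5$.

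The main obstacle, I expect, is making rigorous the reduction ``$\sup$ over $\pmf(\pentagon)$ of the extremal length ratio is controlled by the five vertex values,'' and the claim that the maximum of $\el(\alpha_j,\cdot)$ over $Q_a$ is attained where one already has good bounds. For the first point, one needs a clean inequality of the form $\el(s\alpha_j + t\alpha_k, X) \leq 2\big(s^2\el(\alpha_j,X) + t^2\el(\alpha_k,X)\big)$ (parallel law, since disjoint arcs can be fattened to disjoint rectangles) together with the reverse inequality $\el(s\alpha_j+t\alpha_k,X) \geq \max\{s^2\el(\alpha_j,X), t^2\el(\alpha_k,X)\}$ (monotonicity under the inclusion of one rectangle), so that the ratio $\el(F,X)/\el(F,\origin)$ is squeezed between a constant times $\min_j$ and a constant times $\max_j$ of $\el(\alpha_j,X)/\el(\alpha_j,\origin)$. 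For the second point, rather than invoking convexity of $\el$ along geodesics (which would require that $Q_a$'s boundary geodesics are the relevant ones), it is cleaner to bound $\el(\alpha_j, X)$ directly for $X \in \sigma_k(U_a)$ by exhibiting, for each $X$ in that half-plane, an embedded rectangle homotopic to $\alpha_j$ of bounded modulus --- the rectangle being transported from the explicit $L$-shape picture of $\sigma_k(\eta_a)$ via the quasiconformal map of dilatation $e^{2t}$ with $t = O(\log a)$ that takes $X$ onto $\sigma_k(\eta_a)$. Carrying out this transport carefully, and checking it works uniformly in $j$ and $k$, is where the real work lies; the constant $C_2$ that emerges will be explicit but its precise value is immaterial.
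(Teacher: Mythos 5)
Your overall strategy is genuinely different from the paper's, and unfortunately it leaves the central claim unproven. The paper's proof is much more economical: by symmetry and convexity it suffices to bound the distance from the origin to the boundary side $\{\g_t\Phi_a : |t| \le \log(1+a)\}$ (the range of $t$ coming from Proposition \ref{prop:intersection}); then one writes $d(\origin,\g_t\Phi_a) \le d(\origin,\Phi_a) + |t|$ and observes that $d(\origin,\Phi_a)$ is computed \emph{exactly} by Kerckhoff's formula (\ref{eqn:kerck}) with the single foliation $\alpha_1$, because $\Phi_a$ lies on the axis $\gamma_1$ through the origin; the only extremal length estimate needed is $\el(\alpha_1,\Phi_a)\le 1+2a$ from Lemma \ref{lem:upperalpha1}. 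No control of $\el(F,X)$ for general $F$ or for general $X\in Q_a$ is required. By contrast, your plan hinges on the assertion that $\el(\alpha_j,X)\le c\,a^2\,\el(\alpha_j,\origin)$ for every $X\in Q_a$ and every $j$, and this is exactly the step you do not establish. The estimate you cite, Lemma \ref{lem:upperalpha1}, gives $\el(\alpha_1,\g_t\Phi_a)\le 1+a+e^{2t}a$, which at the far end $t=\log(1+a)$ of the admissible interval is of order $a^3$, not $a^2$; nothing in the paper (or in your argument) locates the actual vertex of $Q_a$ closer to $t=0$ than that. Moreover, your reduction to the boundary invokes convexity of extremal length along Teichm\"uller geodesics, which is not an available elementary fact (only quasi-convexity, due to Lenzhen and Rafi, or log-plurisubharmonicity, neither of which you set up), and even granting it you would still have to carry out the boundary estimates for all five arcs $\alpha_j$ along all five sides, which you explicitly defer. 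The one piece of your plan that does work cleanly is the reduction of the supremum in Kerckhoff's formula to the five vertex arcs: Cauchy--Schwarz in the first definition of extremal length gives $\el(s\alpha_j+t\alpha_k,X)\le 2\bigl(s^2\el(\alpha_j,X)+t^2\el(\alpha_k,X)\bigr)$ and monotonicity gives the reverse bound up to a factor, so the ratio in (\ref{eqn:kerck}) is controlled by the five vertex ratios up to an additive constant in the distance. But for this lemma that reduction is unnecessary, since only a lower bound on the supremum (hence a single well-chosen $F$) is ever used.

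One further caution about the target exponent: the bound $O(a^2)$ you are aiming for (so as to land on $\log a + C_2$ after taking $\tfrac12\log$) is also not what the paper's own computation delivers. As written, the paper's proof yields $d(\origin,\g_t\Phi_a)\le \tfrac12\log(1+2a)+\log(1+a)+O(1)$, which is $\tfrac32\log a + O(1)$, not $\log a + O(1)$; the statement of the lemma and the constants propagated into Corollary \ref{cor:inout} and the divergence bound appear to need adjustment accordingly (the qualitative conclusion, linear divergence, is unaffected). So the quantity you would have to prove to match the stated lemma is not supported by the estimates available in the paper either, and your route would at best recover the same $\tfrac32\log a$ bound at considerably greater expense.
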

\begin{proof}
Since $Q_a \subset Q_b$ if $a\leq b$, we may assume that $a\geq 2$. Once again, it suffices to bound $d(\origin, \g_t \Phi_a)$ from above for $t \in [0,\log(1+a)]$.  By the triangle inequality,
$$
d(\origin, \g_t \Phi_a) \leq d(\origin, \Phi_a) + d(\Phi_a, \g_t \Phi_a) \leq d(\origin, \Phi_a) + t \leq d(\origin, \Phi_a) + \log(1+a).
$$

Since $\Phi_a$ is on the ray $\gamma_1^-$, we have the equality
$$
d(\origin, \Phi_a) = \frac{1}{2}\log \frac{\el(\alpha_1, \Phi_a)}{\el(\alpha_1, \origin)}
$$
in Kerckhoff's formula. According to Lemma \ref{lem:upperalpha1}, $\el(\alpha_1, \Phi_a)  \leq 1+2a$. The result follows by combining the above inequalities with $(1+a) \leq 2a$ and $(1+2a) \leq 3a$ (recall that $a\geq 2$).
\end{proof}

\begin{cor} \label{cor:inout}
There exits a constant $C_3>0$ such that for every $t > C_3$, the pentagon $Q_a$ with $a = e^{8t/3}$ contains the ball of radius $t$ around the origin and is contained in the ball of radius $3t$ around the origin. 
\end{cor}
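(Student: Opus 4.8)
The plan is to obtain Corollary \ref{cor:inout} by a direct substitution of $a = e^{8t/3}$ into Lemma \ref{lem:inner} and Lemma \ref{lem:outer}, followed by choosing $C_3$ large enough that the two resulting linear-in-$t$ bounds acquire the correct signs. The role of the exponent $8/3$ is that it lies strictly between $2$ and $3$. Indeed, Lemma \ref{lem:inner} produces a ball of radius $\tfrac12 \log a - C_1$ inside $Q_a$, which exceeds $t$ exactly when $\log a$ beats $2t$ by enough to absorb $2C_1$; and Lemma \ref{lem:outer} confines $Q_a$ to a ball of radius $\log a + C_2$, which is dominated by $3t$ exactly when $\log a$ falls short of $3t$ by enough to absorb $C_2$. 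Taking $\log a = \tfrac{8}{3}t$ leaves a linear margin of $\tfrac{t}{3}$ on each side, which is what makes the argument go through for all sufficiently large $t$.

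Concretely, I would first apply Lemma \ref{lem:inner} with $a = e^{8t/3}$, so that the inner radius becomes $\tfrac12 \log a - C_1 = \tfrac{4}{3}t - C_1$; this is at least $t$ as soon as $t \geq 3C_1$. Then I would apply Lemma \ref{lem:outer} with the same value of $a$, so that the outer radius becomes $\log a + C_2 = \tfrac{8}{3}t + C_2$; this is at most $3t$ as soon as $t \geq 3C_2$. Setting $C_3 = 3\max\{C_1, C_2\}$ then delivers both conclusions simultaneously for every $t > C_3$.

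I do not expect any genuine obstacle: the corollary is purely a bookkeeping consequence of the two preceding lemmata. The only point requiring a moment's care is to note that Lemma \ref{lem:inner} and Lemma \ref{lem:outer} are valid for all $a > 0$ — the small-$a$ regime having already been folded into the constants $C_1$ and $C_2$ in their proofs — so that no auxiliary lower bound on $a$, equivalently on $t$, is needed beyond $t > C_3$.
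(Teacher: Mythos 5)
Your proof is correct and is essentially identical to the paper's, which likewise substitutes $a = e^{8t/3}$ into Lemma \ref{lem:inner} and Lemma \ref{lem:outer} and takes $t$ large enough that $t \leq \frac{4t}{3} - C_1$ and $\frac{8t}{3} + C_2 \leq 3t$. Your explicit threshold $C_3 = 3\max\{C_1, C_2\}$ just makes precise the paper's ``if $t$ is large enough.''
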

\begin{proof}
If $t$ is large enough then 
$$
t \leq \frac{4t}{3} - C_1 = \frac{1}{2}\log a - C_1  \quad  \text{and} \quad  \log a + C_2 = \frac{8t}{3}  + C_2 \leq 3t
$$
where $C_1$ and $C_2$ are the constants from Lemma \ref{lem:inner} and Lemma \ref{lem:outer}. The result follows from these.
\end{proof}

\subsection{Linear divergence}

Given two geodesic rays $\eta$ and $\nu$ starting from the same point $p$ in $\teich(\pentagon)$, the divergence $\divr(\eta,\nu,t)$ is defined as the distance between $\eta(t)$ and $\nu(t)$ as measured along paths disjoint from the open ball of radius $t$ centered at $p$. We can now prove that rays from the origin diverge at most linearly.

\begin{prop} \label{prop:divfromorigin}
There exists a constant $C>0$ such that for any two geodesic rays $\eta$ and $\nu$ starting from the origin in $\teich(\pentagon)$ and any $t>0$ we have
$$
\divr(\eta,\nu,t) \leq 18 t + C.
$$
\end{prop}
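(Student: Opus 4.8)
The idea is to route a competing path along the boundary of one of the exhausting pentagons, exploiting the two-sided radius estimate of Corollary~\ref{cor:inout}. We may assume $\eta\neq\nu$, and (absorbing the rest into the additive constant) that $t$ exceeds the threshold $C_3$ of that corollary, enlarged if necessary so that $a=e^{8t/3}\ge 2$: indeed for $0<t\le C_3$ one connects $\eta(t)$ to $\nu(t)$ by running out along $\eta$ to its exit point from the \emph{fixed} pentagon $Q_{a_0}$ with $a_0=e^{8(C_3+1)/3}$, then along $\partial Q_{a_0}$, then back in along $\nu$; this path has length bounded by a constant $C_0$ depending only on $C_3$, and stays out of $B(\origin,t)\subset B(\origin,C_3+1)\subset\mathrm{int}(Q_{a_0})$. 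So fix $t>C_3$, set $a=e^{8t/3}$, and invoke Corollary~\ref{cor:inout} to get
\[
B(\origin,t)\subset Q_a\subset B(\origin,3t),
\]
with $Q_a$ a convex, regular, geodesic pentagon of perimeter at most $10\log(1+a)\le 10\log 2+\tfrac{80}{3}t$.

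Each of $\eta,\nu$ is a geodesic ray issuing from $\origin$, which lies in the interior of the compact convex set $Q_a$. By convexity the set of $s$ with $\eta(s)\in Q_a$ is an interval $[0,s_\eta]$, and similarly $[0,s_\nu]$ for $\nu$. Since $B(\origin,t)$ is open and contained in $Q_a$, it lies in $\mathrm{int}(Q_a)$, so $\partial Q_a$ is disjoint from $B(\origin,t)$ and hence $s_\eta,s_\nu\ge t$; since $Q_a\subset B(\origin,3t)$ we also have $s_\eta,s_\nu\le 3t$. In particular $\eta(s_\eta),\nu(s_\nu)\in\partial Q_a$, while the sub-rays $\eta|_{[t,s_\eta]}$ and $\nu|_{[t,s_\nu]}$ consist of points at distance $\ge t$ from $\origin$.

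Now build the path from $\eta(t)$ to $\nu(t)$ in three legs: follow $\eta$ forward from $\eta(t)$ to $\eta(s_\eta)$; travel along $\partial Q_a$ from $\eta(s_\eta)$ to $\nu(s_\nu)$, choosing the shorter of the two boundary subarcs; follow $\nu$ backward from $\nu(s_\nu)$ to $\nu(t)$. Every point of this path is at distance $\ge t$ from $\origin$ — the two radial legs by the parametrization, the middle leg because it lies on $\partial Q_a$. The radial legs have lengths $s_\eta-t\le 2t$ and $s_\nu-t\le 2t$, and the middle leg has length at most half the perimeter of $Q_a$, i.e. $5\log 2+\tfrac{40}{3}t$. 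Hence
\[
\divr(\eta,\nu,t)\le 2t+2t+5\log 2+\tfrac{40}{3}t=\tfrac{52}{3}t+5\log 2<18t+5\log 2,
\]
and taking $C=\max\{5\log 2,\,C_0\}$ finishes the proof.

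\textbf{Main obstacle.} There is no real obstacle: the substantive ingredient, Corollary~\ref{cor:inout} (itself built on Proposition~\ref{prop:intersection} and the perimeter bound $10\log(1+a)$), is already in hand, and what remains is the elementary fact that a geodesic ray from an interior point meets the boundary of a compact convex set in a single parameter, plus bookkeeping of constants. The only point requiring a little care is the coefficient: the perimeter estimate contributes a factor $\tfrac{40}{3}$ on the middle leg and the radius estimate a factor $4$ on the radial legs, so $18$ is chosen merely to dominate $\tfrac{52}{3}$.
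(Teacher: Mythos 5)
Your proposal is correct and follows essentially the same route as the paper: run out along $\eta$ to $\partial Q_a$ with $a=e^{8t/3}$, traverse the shorter boundary arc (at most half the perimeter $10\log(1+a)$), and return along $\nu$, using Corollary~\ref{cor:inout} to bound the radial legs by $2t$ each. Your treatment of the small-$t$ case and of why the rays exit $\partial Q_a$ exactly once is more explicit than the paper's one-line reduction, but the argument and the resulting bound $\tfrac{52}{3}t+O(1)$ are the same.
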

\begin{proof}
By adjusting the constant $C$ if necessary, it is enough to prove the inequality for $t$ large. Assume that $t>C_3$, the constant given in Corollary \ref{cor:inout}. Then the pentagon $Q_a$ with $a = e^{8t/3}$ contains the ball of radius $t$ around the origin, and is contained in the ball of radius $3t$.

We construct a path from $\eta(t)$ to $\nu(t)$ as follows. From $\eta(t)$ we continue along the same ray to reach $Q_a$ then go around $\partial Q_a$ to the intersection $x$ between $\nu$ and $\partial Q_a$ on the shortest of the two sides, then back to $\nu(t)$ along $\nu$. The constructed path has length at most twice the difference between the outer and inner radius of $Q_a$ plus half the perimeter of $Q_a$. This gives an upper bound of 
\begin{equation*}4t + 5 \log(1+e^{8t/3}) \leq  4t + \frac{40 t}{3} + \log 2 = \frac{52 t}{3} + \log 2 \leq 18t + \log 2. \qedhere\end{equation*}
\end{proof}

Using the triangle inequality, it is not hard to deduce that a similar estimate holds for rays starting from any point, which is the content of Theorem \ref{thm:divergence} for $\teich(\pentagon)$.

\begin{cor} \label{cor:triangineq}
For any $p \in \teich(\pentagon)$, there exists a constant $D>0$ such that for any geodesic rays $\eta$ and $\nu$ from $p$ and any $t>0$ we have
$$
\divr(\eta,\nu,t) \leq 18 t + D.
$$
\end{cor}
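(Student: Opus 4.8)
The plan is to reduce the statement to Proposition~\ref{prop:divfromorigin} via the triangle inequality. Write $\origin$ for the origin of $\teich(\pentagon)$ and set $R = d(p,\origin)$; we may assume $R>0$, the case $R=0$ being Proposition~\ref{prop:divfromorigin} itself. Fix geodesic rays $\eta,\nu$ from $p$ and a parameter $t>0$. The key point is that a path witnessing $\divr(\eta,\nu,t)$ must avoid the open ball $B(p,t)$, whereas Proposition~\ref{prop:divfromorigin} only produces paths avoiding balls centred at $\origin$; since every point of $B(p,t)$ lies within distance $t+R$ of $\origin$, it suffices to build a path from $\eta(t)$ to $\nu(t)$ disjoint from $B(\origin,t+R)$.

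First I would flow outward along the two given rays. Since $\eta$ is a geodesic ray based at $p$ we have $d(p,\eta(s))=s$, so the subray $\eta|_{[t,\infty)}$ is disjoint from $B(p,t)$, and likewise for $\nu$. Set $t'=t+2R$; then $\eta(t')$ and $\nu(t')$ lie at distance $t'$ from $p$, hence at distance at least $t'-R=t+R$ from $\origin$. Next I would project to the origin: as $\teich(\pentagon)$ is complete and uniquely geodesic with a geodesic ray in every direction, let $\bar\eta$ be the arclength-parametrized geodesic ray from $\origin$ through $\eta(t')$, say $\bar\eta(s')=\eta(t')$ with $s'=d(\origin,\eta(t'))\in[t+R,\,t+3R]$, and define $\bar\nu$ and $\tilde s'\in[t+R,\,t+3R]$ analogously. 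Put $P=t+3R$, so that $P\ge\max(s',\tilde s')$. The segments of $\bar\eta$ and $\bar\nu$ running from parameter $s'$ (resp.\ $\tilde s'$) to parameter $P$ each have length at most $2R$ and consist of points at distance at least $t+R$ from $\origin$, hence at distance at least $t$ from $p$; in particular they avoid $B(p,t)$. Finally, apply Proposition~\ref{prop:divfromorigin} to the rays $\bar\eta,\bar\nu$ at parameter $P$ to obtain a path from $\bar\eta(P)$ to $\bar\nu(P)$ of length at most $18P+C$ that is disjoint from $B(\origin,P)$, which contains $B(\origin,t+R)\supseteq B(p,t)$.

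Concatenating the five pieces
$$
\eta(t)\ \longrightarrow\ \eta(t')\ \longrightarrow\ \bar\eta(P)\ \longrightarrow\ \bar\nu(P)\ \longrightarrow\ \nu(t')\ \longrightarrow\ \nu(t)
$$
--- two geodesic segments of length $2R$ along $\eta$ and $\nu$, two projection segments of length at most $2R$, and the middle path of length at most $18P+C=18t+54R+C$ --- yields a path from $\eta(t)$ to $\nu(t)$ disjoint from $B(p,t)$ of total length at most $18t+(62R+C)$. Hence $\divr(\eta,\nu,t)\le 18t+D$ with $D:=62R+C$ works for every $t>0$. The only genuine obstacle is the bookkeeping that keeps every segment outside $B(p,t)$: flowing the distance $2R$ along $\eta$ and $\nu$ before projecting towards $\origin$ is precisely what guarantees that the radial projection segments start far enough from $p$ to stay out of the forbidden ball. (Alternatively, one could bypass Proposition~\ref{prop:divfromorigin} and rerun its proof verbatim, choosing the exhausting pentagons $Q_a$ so large that $Q_a\supseteq B(\origin,t+R)$; this even gives the sharper constant $12t$, but is no shorter.)
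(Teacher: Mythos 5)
Your proposal is correct and follows essentially the same route as the paper: reduce to Proposition~\ref{prop:divfromorigin} by flowing outward along $\eta$ and $\nu$ past the ball $B(p,t)$, joining the resulting points by a path avoiding a ball centred at the origin that contains $B(p,t)$, and controlling the lengths of the connecting segments by the triangle inequality. Your explicit radial padding to a common parameter $P$ replaces the paper's intermediate value theorem step and yields a larger constant ($D = 62R + C$ versus $22b + C$), but this is immaterial for the statement.
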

\begin{proof}
Let $\origin$ be the origin of $\teich(\pentagon)$ and let $b = d(\origin, p)$. We will show that the result holds with $D=22b + C$ where $C$ is the constant from Proposition \ref{prop:divfromorigin}. By the triangle inequality we have $$t-b \leq d(\origin , \eta(t)) \leq t + b$$  and similarly for $\nu(t)$. It follows from the intermediate value theorem that there exists some $s \in [t,t+2b]$ such that $d(\origin , \eta(s))=t+b$ and some $s' \in [t,t+2b]$ such that $d(\origin , \nu(s'))=t+b$.

We can now construct an efficient path between $\eta(t)$ and $\nu(t)$. From $\eta(t)$, we follow $\eta$ to $\eta(s)$. By Proposition \ref{prop:divfromorigin}, there is a path of length at most $18(t+b) + C$ between $\eta(s)$ and $\nu(s')$ which is disjoint from the ball $B(\origin, t+b)$, hence disjoint from $B(p,t)$. We complete the path by following $\nu$ from $\nu(s')$ to $\nu(t)$. The total length is at most \begin{equation*}2b + (18(t+b) + C) + 2b = 18t + D. \qedhere\end{equation*}
\end{proof}

Presumably, the dependence of the constant $D$ on the point $p$ can be removed (cf. \cite{DuchinRafi}), but this does not seem to follow from our methods.

Since every geodesic ray in $\teich(\pentagon)$ is Jenkins-Strebel, a result of Masur \cite{MasurThesis} implies that two geodesic rays in $\teich(\pentagon)$ stay a bounded distance apart if and only if their vertical foliations are topologically equivalent (see also \cite{Amano}). This con\-di\-tion means that if we forget the weights, then the underlying multiarc is the same. Said differently, two rays in $\teich(\pentagon)$ stay a bounded distance apart if and only if their projective vertical foliations either correspond to the same vertex or lie in the same open edge of $\pmf(\pentagon)$. Thus the divergence is often sublinear.

\section{Punctured triangles}

We prove similar results for the Teichm\"uller space $\teich(\triangletimes)$ of punctured triangles.

\subsection{Representation}

An element of $\teich(\triangletimes)$ is (an equivalence class of) a bordered Riemann surface $X$ homeo\-morphic to the closed disk together with a $4$-tuple $(x_0,x_1,x_2,x_3)$ where $x_0 \in X^\circ$ and $x_1$, $x_2$ and $x_3$ are distinct and appear in counter-clockwise order along $\partial X$. Two pairs $(X,\vec{x})$ and $(Y,\vec{y})$ are equivalent if there is a conformal diffeomorphism $h:X \to Y$ such that $h(x_j)=y_j$ for every $j\in \{0,1,2,3\}$. Again, the labelling of distinguished points plays the same role as a marking $\triangletimes \to X$.

By the Riemann Mapping Theorem, every element of $\teich(\triangletimes)$ can be represented uniquely as the closed unit disk $\overline \DD$ with $x_0 \in \DD$, $x_1=1$, $x_2=e^{2\pi/3}$ and $x_3 = e^{4\pi /3}$. With this normalization, $x_0 \in \DD$ is the only parameter. Hence $\teich(\triangletimes)$ is homeomorphic to $\DD$ or $\RR^2$.

\subsection{The three axes of symmetry}

The dihedral group $D_3$ acts on $\teich(\triangletimes)$ by permuting the labels of the boundary marked points and reversing orientation when the permutation does so. This action is isometric with respect to the Teichm\"uller metric. Let $\sigma_1=(23)$, $\sigma_2=(13)$ and $\sigma_3=(12)$. The locus $\gamma_j$ of fixed points of $\sigma_j$ is a geodesic since the quotient of $\triangletimes$ by $\sigma_j$ is a quadrilateral. If $(X,\vec{x})$ is realized as the closed unit disk with marked points $(x_0,x_1,x_2,x_3) = (x_0, 1, e^{2\pi i / 3}, e^{4\pi i / 3})$, then $\gamma_j$ is the intersection of the straight line through $0$ and $x_j$ with $\DD$. The most symmetric configuration is when $x_0=0$; we call this point the \emph{origin} of $\teich(\triangletimes)$.

\subsection{Measured foliations}

All measured foliations on the punctured triangle are tame, just like on the pentagon.

\begin{lem} \label{lem:multiarctri}
Every measured foliation on $\triangletimes$ is a weighted multiarc.
\end{lem}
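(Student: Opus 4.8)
The plan is to follow the proof of Lemma~\ref{lem:multiarc} almost verbatim, keeping track of the one feature that is new to the punctured triangle: the interior marked point $x_0$. As there, it suffices to show that no leaf of $F$ is recurrent (we may assume $F\neq 0$; closed leaves are a minor separate point addressed at the end). So suppose some leaf $\lambda$ returns to a short transverse arc $\alpha$, and follow $\lambda$ from a point $p_0\in\alpha$ until its first return to $\alpha$ at a point $p_1$. The leaf segment from $p_0$ to $p_1$ together with the sub-arc of $\alpha$ between $p_0$ and $p_1$ is an embedded closed curve $c$ in $\triangletimes$.

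If $c$ is null-homotopic in $\triangletimes$, then it bounds a disk $D$, and since any null-homotopic simple closed curve in the once-punctured disk $\triangletimes$ bounds an honest disk on the side away from $x_0$, we have $x_0\notin D$. Doubling $D$ across its boundary then produces a measured foliation on the sphere with at most two $1$-prong singularities (at the two points where $\alpha$ meets $\lambda$), contradicting the Euler--Poincar\'e inequality exactly as in Lemma~\ref{lem:multiarc}. The genuinely new case is when $c$ is essential, i.e. winds once around $x_0$; then $c$ cuts off a once-punctured disk, and doubling that across $c$ only yields a foliation on a twice-punctured sphere with four $1$-prong singularities, which is consistent with Euler--Poincar\'e and so gives no contradiction on its own.

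To rule out the essential case the idea is to arrange that it does not occur, using that a leaf of a measured foliation cannot spiral: $\lambda$ cannot wind around $x_0$ infinitely often, since a leaf winding arbitrarily many times around $x_0$ while confined to $\triangletimes$ would have to accumulate onto a closed leaf encircling $x_0$, forcing a spiral --- equivalently, it would force a double pole of the associated quadratic differential at $x_0$, whereas only simple poles are permitted at marked points. Since $\lambda$ meets $\alpha$ infinitely often and each first-return loop changes the winding number around $x_0$ by $0$ or $\pm 1$, either some first-return loop is null-homotopic (and we are back in the previous case) or two consecutive first-return loops with opposite winding can be spliced into a null-homotopic loop. The main obstacle, and the step requiring real care, is to guarantee that the null-homotopic loop produced this way is actually \emph{embedded}: the cyclic order of the return points along $\alpha$ can spoil this, and one may have to pass to a longer stretch of $\lambda$, argue directly with the first-return interval-exchange map on $\alpha$, or choose $\alpha$ inside (and crossing only one sector of) a standard neighbourhood of the prong of $F$ at $x_0$. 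Once every leaf of $F$ is shown to be proper it is a proper arc, and any closed leaves must encircle $x_0$, hence have inessential core and contribute nothing to intersection numbers, so $F$ is a weighted multiarc.
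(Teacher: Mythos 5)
You have correctly isolated the new difficulty in passing from the pentagon to the punctured triangle: when the first-return disk contains $x_0$, its double is a sphere carrying up to four $1$-prong singularities, so the crude Euler--Poincar\'e count no longer yields a contradiction. However, your proposed resolution does not close the case. The splicing of consecutive first-return loops is left incomplete at precisely the step you flag (embeddedness of the spliced loop), and it has a second problem you do not mention: even when the spliced loop is embedded and bounds a disk missing $x_0$, the two leaf segments meet $\alpha$ at the intermediate return point, so the resulting disk need not have the two-corner leaf/transversal boundary structure that makes the doubling argument bite, and a disk with four convex corners doubles to a sphere with four $1$-prongs, which is again Euler--Poincar\'e--consistent. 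Finally, the claim that unbounded winding around $x_0$ would ``force a double pole of the associated quadratic differential'' is not an argument available here: the lemma is a statement about measured foliations themselves, and importing the Hubbard--Masur correspondence at this point is both out of order logically and does not by itself bound the winding of a single leaf. So there is a genuine gap.

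The paper closes the essential case not by changing the loop but by squeezing more out of Euler--Poincar\'e on the same doubled sphere. Since the index sum must equal $2$ and the only candidate $1$-prongs are the two corners together with $x_0$ and its mirror image, the doubled foliation must have \emph{exactly} four $1$-prongs and \emph{no other singularities}; in particular neither corner can be a $3$-prong. A corner doubles to a $1$-prong only when $\lambda$ meets $\alpha$ there from the appropriate side, so this forces $\lambda$ to intersect $\alpha$ from the same side at the two return points. Applying this to every consecutive pair of returns, and observing that after each transverse crossing the leaf continues on the opposite side of $\alpha$ (so a third intersection would have to occur from the other side), one concludes that $\lambda$ meets $\alpha$ at most twice, contradicting recurrence. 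This is the one idea your write-up is missing; with it, no discussion of winding numbers, splicing, or poles of quadratic differentials is needed.
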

\begin{proof} Let $F$ be a measured foliation on $\triangletimes$. It suffices to prove that every leaf of $F$ is a proper arc. Suppose not and let $\lambda$ be a leaf of $F$ which is recurrent to some part of $\triangletimes$. Let $\alpha$ be a short arc transverse to $F$ to which $\lambda$ returns. Starting from $\alpha$, follow $\lambda$ until it first returns to $\alpha$. The region enclosed by these arcs is a disk that possibly includes the interior marked point of $\triangletimes$. By doubling this disk across the boundary, we get a measured foliation $G$ on the sphere with at most four 1-prong singularities: at the two intersection points of $\alpha$ and $\lambda$ as well as at the interior marked point and its mirror image in the double. By the Euler--Poincar\'e formula, $G$ has exactly four $1$-prong singularities and no other singularities. This implies that $\lambda$ intersect $\alpha$ from the same side at the two intersection points, for otherwise one of these intersection points would form a $3$-prong singularity in the double. But this argument applies to all intersection points between $\lambda$ and $\alpha$, which means that they intersect only twice. Indeed, the next intersection would have to be from the other side of $\alpha$. This contradicts the hypothesis that $\lambda$ is recurrent.  
\end{proof}

\begin{figure}[htp] 
\includegraphics[scale=.8]{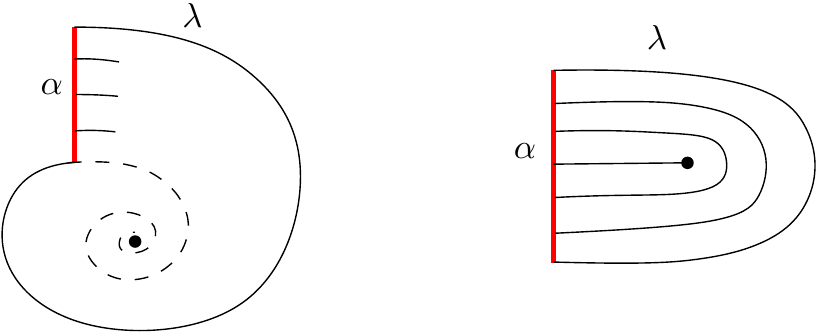}
\caption{The disk bounded by $\alpha$ and $\lambda$ in the proof of Lemma \ref{lem:multiarctri}. The situation on the left is forbidden by the Euler--Poincar\'e formula; it would force a singularity of index $-2$ at the interior marked point.} \label{fig:disk}
\end{figure}

There are two types of essential arcs in $\triangletimes$. There are those which separate two boundary marked points from the other two marked points, and those which separate the interior marked point from the $3$ boundary ones. We label the former ones by $\alpha_j$ and the latter ones by $\beta_j$ in such a way that each of $\alpha_j$ and $\beta_j$ is preserved by the reflection $\sigma_j$ (see Figure \ref{fig:pmftri}). Thought of as the arc graph, $\pmf(\triangletimes)$ is an hexagon with a bicoloring of its vertices. Indeed, the vertices $\alpha_j$ and the vertices $\beta_j$ form disjoint orbits under the action of the extended mapping class group $D_3$. 

\begin{figure}[htp] 
\includegraphics[scale=.8]{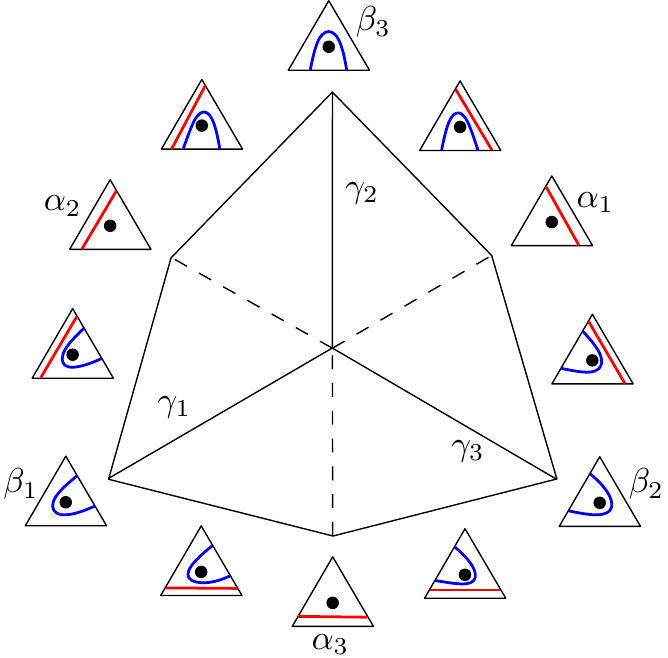}
\caption{$\pmf(\triangletimes)$ is an hexagon with $D_3$ symmetry and two types of vertices. In each small triangle, the bottom left vertex is labelled $1$ and the other vertices are labelled in counterclockwise order.} \label{fig:pmftri}
\end{figure}

\subsection{Quadratic differentials}

\begin{lem}
Every rectangular structure on $\triangletimes$ is either a rectangle or an $L$-shape with one of its horizontal segments folded in two.
\end{lem}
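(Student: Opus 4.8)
The plan is to follow the proof of Lemma~\ref{lem:lshape} almost verbatim, the one new feature being the interior marked point. Represent the given point of $\teich(\triangletimes)$ as $\overline{\DD}$ with marked points $x_0\in\DD$ and $x_1,x_2,x_3\in\partial\DD$, and let $q$ be a quadratic differential on it. Since $q$ is real along $\partial\DD$, Schwarz reflection through $z\mapsto 1/\bar z$ extends it to a quadratic differential $\hat q$ on $\CHAT$, symmetric about the unit circle, with at most simple poles at the five points $x_0$, $1/\bar x_0$, $x_1$, $x_2$, $x_3$. By the Euler--Poincar\'e formula the divisor of $\hat q$ has degree $-4$.

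First I would extract the possible divisor types. Put $m_0=\operatorname{ord}_{x_0}\hat q=\operatorname{ord}_{1/\bar x_0}\hat q\ge -1$ and $m_j=\operatorname{ord}_{x_j}\hat q\ge -1$ for $j=1,2,3$; the remaining zeros of $\hat q$ have positive order and occur in symmetric pairs, except for simple zeros lying on the unit circle, which may occur singly. Their total contribution $S$ to the divisor is therefore a non-negative integer, and $S=1$ can only come from a single simple zero on $\partial\DD$ (a symmetric pair contributes at least $2$). Since $2m_0+m_1+m_2+m_3 = -4-S$ and the left-hand side is at least $-5$, we must have $S\in\{0,1\}$, and in either case $m_0=-1$ --- \emph{$q$ always has a simple pole at the interior marked point} --- while exactly one of the following occurs: (A) $S=1$, so $q$ has a simple pole at each of $x_1,x_2,x_3$ and a single simple zero on $\partial\DD$; or (B) $S=0$, so $q$ has a simple pole at exactly two of the $x_j$ and an ordinary point at the third, with no other zeros.

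Finally I would read off the rectangular structure. A simple pole has cone angle $\pi$ and carries a single horizontal prong, so there is a unique horizontal singular leaf $c$ issuing from $x_0$ (ending on $\partial\DD$, or, in case (A), possibly at the boundary zero). Cutting $\overline{\DD}$ along $c$ produces a simply connected region on which the natural coordinate $z\mapsto\int\sqrt q$ is single-valued: the only interior odd-order point was $x_0$ and it now lies on the cut, while odd-order points on $\partial\DD$ produce no monodromy. The image is an immersed polygon with sides parallel to the axes; since the cone angle at $x_0$ is $\pi$, the two sides of $c$ are collinear, so $c$ appears as a horizontal edge folded in two with crease at $x_0$. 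Accounting for the other corners via the Euler--Poincar\'e formula: in case (B) the only corners are the two $\pi/2$ corners at the simple poles among $x_1,x_2,x_3$ together with the two $\pi/2$ corners at the endpoints of the fold, so the polygon is a rectangle (with $x_0$ at the crease and the remaining marked point on a side); in case (A) there is in addition one reflex corner of angle $3\pi/2$ at the boundary zero, so the polygon is an $L$-shape. As at the end of the proof of Lemma~\ref{lem:lshape}, one checks that the immersed polygon is in fact embedded.

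The step I expect to be the main obstacle is this last one: justifying that the cut may be taken along a single singular leaf --- in particular, disposing of the degenerate configurations in which that leaf terminates at some $x_j$ or at the boundary zero rather than transversally on $\partial\DD$ --- and then verifying embeddedness and the precise placement of the marked points. Lemma~\ref{lem:multiarctri}, which forces the horizontal foliation of $q$ to be a weighted multiarc and hence $q$ to be Jenkins--Strebel, keeps the critical graph finite with complement one or two rectangles, which is what makes this analysis tractable.
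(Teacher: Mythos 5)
Your proof follows the same strategy as the paper's: force a simple pole at $x_0$ by doubling and the Euler--Poincar\'e formula, cut along the unique horizontal separatrix emanating from $x_0$ (using Lemma \ref{lem:multiarctri} to know it is a proper arc), and identify the cut-open surface as a rectangle or an $L$-shape --- the paper does this last step by observing the cut surface is a quadrilateral or pentagon and citing Lemma \ref{lem:lshape}, whereas you redo the development directly, supplemented by an explicit divisor count that the paper omits. The degenerate configurations and the embeddedness check that you flag as the main obstacle are treated no more carefully in the paper's own proof, so your argument is at the same level of rigor and reaches the same conclusion (with your case (B), a rectangle carrying the extra regular marked point on a side, matching the paper's ``degenerate $L$-shape'' convention).
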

\begin{proof}
Let $q$ be a quadratic differential on $X \in \teich(\triangletimes)$. It is easy to see that $q$ must have a simple pole at the interior marked point $x_0$. Indeed, $q$ extends by symmetry to the double $\widetilde{X}$ of $X$, which is a sphere with 5 points marked. If $q$ did not have a pole at $x_0$, its extension $\widetilde{q}$ would have at most $3$ simple poles. The latter is forbidden by the Euler--Poincar\'e formula. Cut $X$ along the horizontal trajectory $\lambda$ from $x_0$ and call the resulting surface $Y$. Note that $x_0$ does not need to be marked in $Y$, as it unfolds to a regular boundary point (the total angle around it is $\pi$). However, the other endpoint of $\lambda$ on $\partial X$ corresponds to 2 points in $\partial Y$ which we both mark.  Thus $Y$ is a disk with $4$ or $5$ boundary marked points (depending on whether $\lambda$ ends at a marked point of $X$ or not) equipped with a rectangular structure.  The only rectangular structures on quadrilaterals are rectangles, while rectangular structures on pentagons are $L$-shapes by Lemma \ref{lem:lshape}. Since two of the marked points of $Y$ must match after folding a horizontal side, one of them must be folded exactly in two. In the case of a non-degenerate $L$-shape, the folded side must be the top or bottom one, as the inward corner is not marked. 
\end{proof}

\begin{figure}[htp] 
\includegraphics[scale=.5]{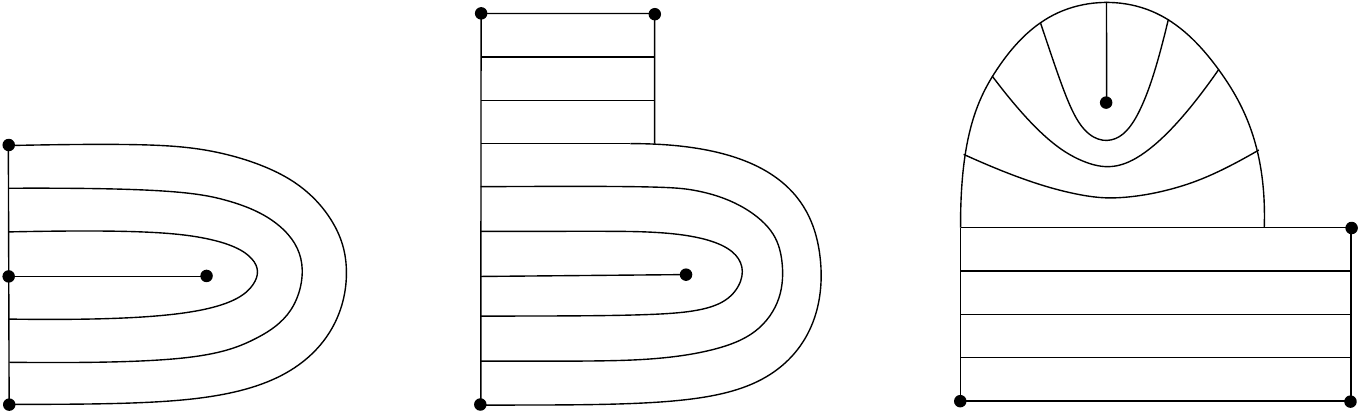}
\caption{Some examples of rectangular structures on the punctured triangle.}
\end{figure}

\subsection{Symmetric geodesics} \label{subsec:trisym}

The exact same argument as in Lemma \ref{lem:perp} applies to the current situation: $\teich(\triangletimes)$ is foliated by geodesics symmetric about $\gamma_1$. Moreover, the symmetric geodesics can be described explicitly.

Given $a \in (0,1)$, let $\Phi_a$ be the convex hull of the points $0$, $1$, $1+ia$, $a+i$ and $i$ in $\CC$ with the side $[1+ia,a+i]$ glued to itself via the central symmetry at its midpoint. The resulting object is a quadratic differential on a punctured triangle $X_a \in \teich(\triangletimes)$ with marked points $x_0 = \frac{1}{2}(1+a)(1+i)$, $x_1 = 0$, $x_2=1$ and $x_3=i$. A simple cut-and-paste procedure transforms $\Phi_a$ into an $L$-shape with a horizontal side folded in two (see Figure \ref{fig:surgery}).  The advantage of the above representation is that it is symmetric with respect to the reflection $R$ in the line $y=x$, which realizes the permutation $\sigma_1$ on the marked points. This implies that $X_a \in \gamma_1$ and that the geodesic $\eta_a = \{ \g_t \Phi_a \mid t \in \RR \}$ is symmetric about $\gamma_1$. Observe that the horizontal and vertical foliations of $\Phi_a$ are equal to $a \alpha_3 + \frac{1-a}{2} \beta_2$ and $a \alpha_2 + \frac{1-a}{2} \beta_3$ respectively.

\begin{figure}[htp] 
\includegraphics[scale=.8]{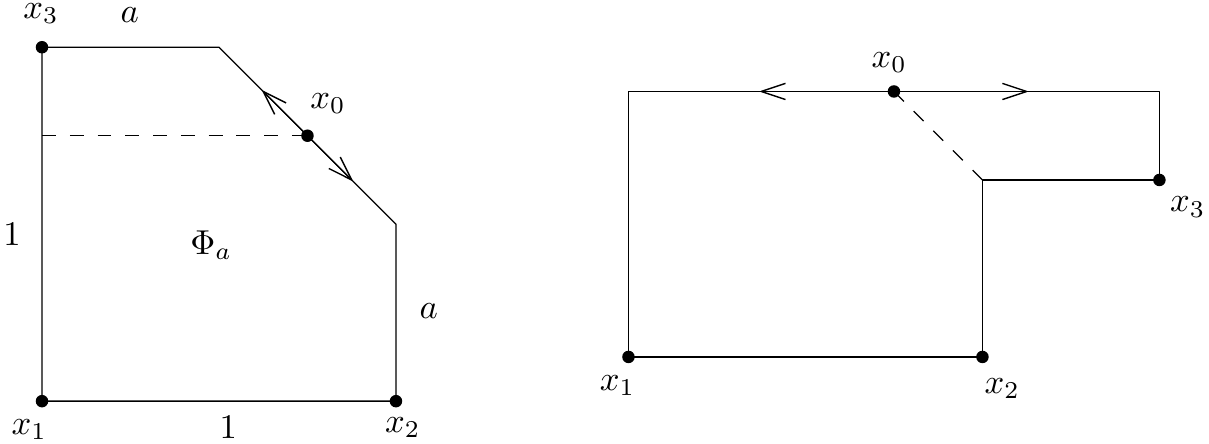}
\caption{A surgery which turns $\Phi_a$ into an $L$-shape with a horizontal side folded in two.} \label{fig:surgery}
\end{figure}

\begin{prop} \label{prop:explicit2}
Any geodesic symmetric about $\gamma_1$ in $\teich(\triangletimes)$ is equal to $\eta_a$ for a unique $a>0$.
\end{prop}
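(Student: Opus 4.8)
The plan is to follow the proof of Proposition~\ref{prop:explicit} closely. As noted at the start of Subsection~\ref{subsec:trisym}, the argument for Lemma~\ref{lem:perp} applies verbatim: through every point of $\teich(\triangletimes)$ there passes a unique geodesic symmetric about $\gamma_1$, and since $\sigma_1$ fixes $\gamma_1$ pointwise while restricting to any other symmetric geodesic as an orientation-reversing isometry of $\RR$, that geodesic meets $\gamma_1$ in exactly one point. Since each $\eta_a$ with $a\in(0,1)$ is symmetric about $\gamma_1$, it is enough to show that $a\mapsto X_a:=\eta_a\cap\gamma_1$, which is represented by $\Phi_a$, is a bijection from $(0,1)$ onto $\gamma_1$.

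Injectivity comes almost for free. Along $\eta_a=\{\g_t\Phi_a\mid t\in\RR\}$ the projective class of the horizontal foliation is constant, equal to $\overline{a\alpha_3+\tfrac{1-a}{2}\beta_2}$; since the coefficient ratio $\tfrac{2a}{1-a}$ is strictly monotone, these classes are pairwise distinct, hence the geodesics $\eta_a$ are pairwise distinct, and therefore so are the points $X_a$ (a symmetric geodesic is determined by any one of its points). As $a\mapsto X_a$ is clearly continuous, its image is an open subinterval of $\gamma_1\cong\RR$, and the problem reduces to showing this subinterval is all of $\gamma_1$.

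To measure that, parametrize $\gamma_1$ by extremal length. The quadratic differential defining $\gamma_1$ has $\sigma_1$-invariant horizontal and vertical foliations; since $\sigma_1$ fixes exactly the two antipodal vertices $\overline{\alpha_1}$ and $\overline{\beta_1}$ of the hexagon $\pmf(\triangletimes)$, those two foliations are $\alpha_1$ and $\beta_1$ in some order. Representing $\gamma_1(t)$ by the corresponding $\sigma_1$-symmetric rectangular structure therefore gives $\el(\alpha_1,\gamma_1(t))=c_0e^{\pm2t}$ for some $c_0>0$, so $X\mapsto\el(\alpha_1,X)$ is a homeomorphism from $\gamma_1$ onto $(0,\infty)$. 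It thus suffices to check that $\el(\alpha_1,X_a)$ comes arbitrarily close to $0$ and grows arbitrarily large. For $a\to0$, the glued side $[1+ia,a+i]$ collapses onto the hypotenuse of $\mathrm{conv}(0,1,i)$ and the boundary marked points $x_2,x_3$ collide, so $\alpha_1$ is pinched; an embedded rectangle homotopic to $\alpha_1$ inside $\Phi_a$ of modulus $O(a)$ --- or a circular strip homotopic to $\alpha_1$ of modulus tending to infinity, as in Proposition~\ref{prop:explicit} --- gives $\el(\alpha_1,X_a)\to0$. For $a\to1$, the interior marked point $x_0=\tfrac12(1+a)(1+i)$ tends to the non-marked boundary point $1+i$ of the limiting unit square, so an arc encircling $x_0$, namely $\beta_1$, is pinched: $\el(\beta_1,X_a)\to0$. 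Since $\overline{\alpha_1}$ and $\overline{\beta_1}$ are non-adjacent vertices of $\pmf(\triangletimes)$ we have $i(\alpha_1,\beta_1)\geq1$, hence $\el(\alpha_1,X_a)\geq i(\alpha_1,\beta_1)^2/\el(\beta_1,X_a)\to\infty$, completing the reduction.

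The part I expect to demand the most work is producing the embedded rectangles and circular strips that witness these two pinches: this means pinning down precisely how $\alpha_1$ and $\beta_1$ sit inside $\Phi_a$ near the collapsing features --- the shrinking self-glued segment near $1+i$, respectively the hypotenuse --- and verifying that the estimates are compatible with the central-symmetry gluing. Representing $\Phi_a$, via the cut-and-paste surgery of Figure~\ref{fig:surgery}, as an $L$-shape with one horizontal side folded in two should make the relevant regions visible, exactly as the genuine $L$-shapes did for the pentagon; that compatibility check is the only genuinely new ingredient.
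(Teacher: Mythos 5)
Your argument is correct and follows the same overall strategy as the paper's proof of Proposition \ref{prop:explicit2}: reduce to showing that $a \mapsto \Phi_a$ is a bijection of $(0,1)$ onto $\gamma_1$, use $X \mapsto \el(\alpha_1,X)$ as a global coordinate on $\gamma_1$ (the paper exhibits the $\sigma_1$-symmetric folded rectangle explicitly rather than arguing via the fixed points of $\sigma_1$ on $\pmf(\triangletimes)$, but both give $\el(\alpha_1,\gamma_1(t))=c_0e^{\pm 2t}$), and control the two limits $a\to 0$ and $a\to 1$ by annulus estimates. Two local steps differ. For injectivity, the paper constructs for $a<b$ a conformal embedding $\Phi_b \hookrightarrow \Phi_a$ (a homothety of ratio $\frac{1+a}{1+b}$ preserving the homotopy class of $\alpha_1$), yielding strict monotonicity of $a\mapsto \el(\alpha_1,\Phi_a)$; your route via the pairwise distinct projective horizontal foliations $\overline{a\alpha_3+\tfrac{1-a}{2}\beta_2}$ combined with uniqueness of the symmetric geodesic through a point is also valid and slightly softer. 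For the $a\to 1$ end, the paper bounds $\el(\alpha_1,\Phi_a)$ from below by noting that every representative of $\alpha_1$ crosses the quarter annulus $A_a=\{1-a<|z-(1+i)|<1\}\cap\Phi_a$ twice; since the core curves of $A_a$ are homotopic to $\beta_1$, this is exactly your Minsky-inequality argument in disguise, and $A_a$ is the witness you were looking for that $\beta_1$ is pinched. The witness for $a\to 0$ is likewise the paper's quarter annulus $C_a=B_a\cup R(B_a)$ whose core curves are homotopic to $\alpha_1$, so the step you flag as remaining work is genuinely there but routine. One slip to fix: an embedded rectangle homotopic to $\alpha_1$ ``of modulus $O(a)$'' would give an upper bound on $\el(\alpha_1,\Phi_a)$ of order $1/a$, not $o(1)$; you want a rectangle or strip of \emph{extremal length} $O(a)$, equivalently modulus tending to infinity, which is what your circular-strip alternative (and the paper's $C_a$) correctly provides.
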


\begin{proof}
Any geodesic symmetric about $\gamma_1$ intersects $\gamma_1$ at some point $x$. Moreover, there is a unique geodesic symmetric about $\gamma_1$ through $x$. Thus we have to show that one of the geodesics $\eta_a$ passes through $x$. In other words, we have to show that the map $a \to \Phi_a$ is a bijection from $(0,1)$ to $\gamma_1$. 

Any point on $\gamma_1$ can be represented as a rectangle of unit area with vertical sides $[x_1,x_1']$ and $[x_2, x_3]$, with $x_0$ the midpoint of $[x_1,x_1']$ and that side folded in two. This rectangular structure is the Jenkins--Strebel differential for $\alpha_1$ at the corresponding point. In particular, the map $$\gamma_1(t) \mapsto \el(\alpha_1, \gamma_1(t)) = c_0 e^{- 2t}$$ is a bijection. Therefore, it suffices to prove that the map $a \mapsto \el(\alpha_1,\Phi_a)$ is a bijection.

If $a<b$, then there is a conformal embedding $\Phi_b \hookrightarrow \Phi_a$ obtained by applying a homothety of factor 
$
\frac{1+a}{1+b}
$
centered at $0$. This conformal embedding sends $x_0^b$ to $x_0^a$ and maps the sides $[x_1^b, x_2^b]$ and $[x_1^b, x_3^b]$ into the corresponding sides of $\Phi_a$. Thus every arc homotopic to $\alpha_1$ in $\Phi_b$ maps to an arc homotopic to $\alpha_1$ in $\Phi_a$. By monotonicity of extremal length under conformal embeddings, we have $\el(\alpha_1, \Phi_a) < \el(\alpha_1, \Phi_b)$ so that the above map is injective. It remains to prove surjectivity.

\begin{figure}[htp] 
\includegraphics[scale=.8]{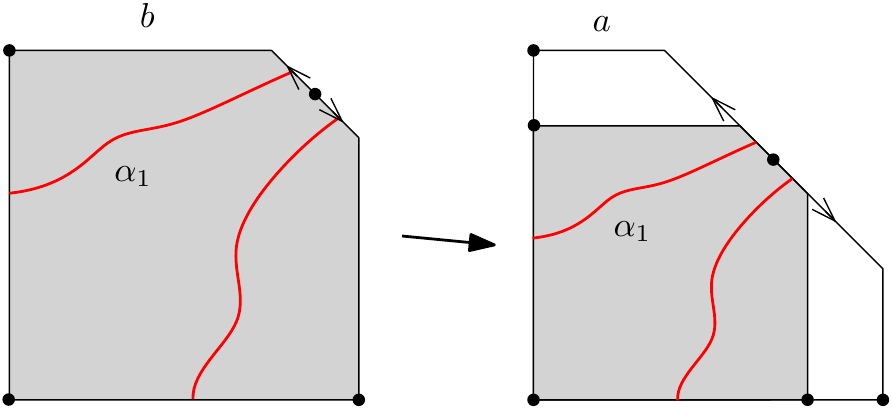}
\caption{If $a<b$, there is a conformal embedding $\Phi_b \to \Phi_a$ which sends all arcs homotopic to $\alpha_1$ on $\Phi_b$ to arcs homotopic to $\alpha_1$ on $\Phi_a$.} \label{fig:embedding}
\end{figure}

Given $a \in (0,1)$, consider the quarter annulus 
$$A_a = \left\{\, z \in \CC \ :\ 1-a <|z-(1+i)|<1 \,\right\} \cap \Phi_a.$$
Every arc homotopic to $\alpha_1$ in $\Phi_a$ has to cross $A_a$ twice (see Figure \ref{fig:annuli}). Thus 
$$
\el(\alpha_1, \Phi_a) > 2^2 \el(\text{across }A_a) = \frac{4 \log(1/(1-a))}{\pi /2}
$$
tends to $+\infty$ as $a \to 1$.

Next consider 
$$
B_a = \left\{\, z \in \CC \ :\ a \sqrt{2} <|z-(1+a)|< a \sqrt{2} + \frac{(1-a)}{\sqrt{2}} \,\right\} \cap \Phi_a
$$
and its mirror image $R(B_a)$ about the diagonal $y=x$ (see Figure \ref{fig:annuli}). These two annuli sectors glue together to form a quarter annulus $C_a = B_a \cup R(B_a)$ in $\Phi_a$. Every concentric circular arc in $C_a$ is homotopic to $\alpha_1$ so that
$$
\el(\alpha_1, \Phi_a) < \el(\text{around }C_a) = \frac{\pi/2}{ \log\left( 1+ \frac{1-a}{2a} \right) }
$$
tends to $0$ as $a \to 0$. By continuity, $\el(\alpha_1, \Phi_a)$ achieves every positive value.
\end{proof}

\begin{figure}[htp] 
\includegraphics[scale=.8]{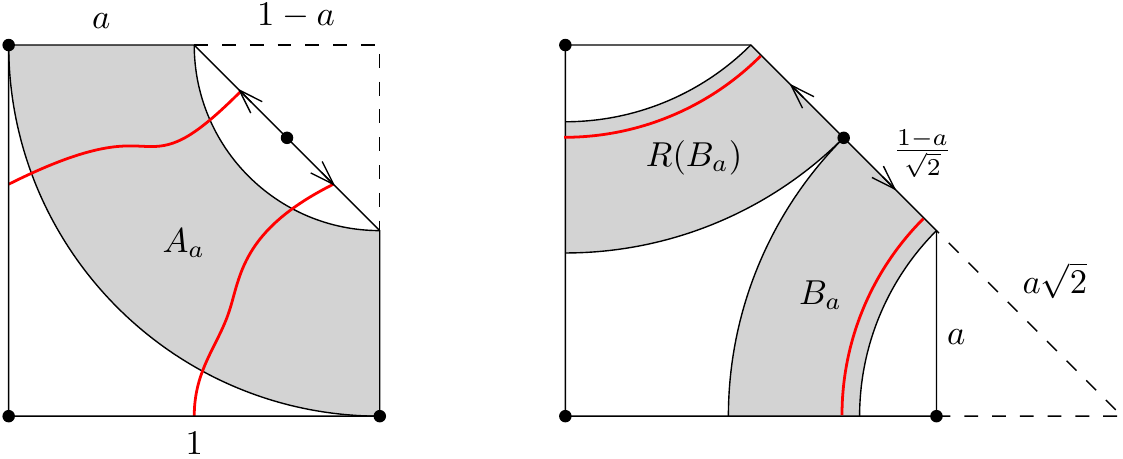}
\caption{The sectors of annuli used to bound $\el(\alpha_1, \Phi_a)$.} \label{fig:annuli}
\end{figure}

Let $U_a$ be the closed half-plane bounded by $\eta_a$ which contains the origin and let
$$
Q_a = \bigcap_{j=1}^3 \sigma_j(U_a).
$$ 
It follows from the proof of Proposition $\ref{prop:explicit2}$ that $U_a \supset U_b$ and hence $Q_a \supset Q_b$ if $0<a<b$, provided that $b$ is small enough (when $b$ passes the value $a_0$ for which $\Phi_{a_0}$ coincides with the origin, the orientation of the half-plane $U_b$ changes). Moreover,  $$\teich(\triangletimes) = \bigcup_{a \in (0,1)} Q_a$$
since the geodesics $\eta_a$ foliate the space. By construction, $Q_a$ is convex and has $D_3$ symmetry. It remains to prove that $Q_a$ is compact, i.e., that $\eta_a$ intersects $\gamma_3$.

\subsection{Equal extremal lengths implies symmetry}

We characterize the geodesic $\gamma_3$ in terms of equality of extremal lengths.

\begin{lem} \label{lem:equalissym2}
Let $X \in \teich(\triangletimes)$. The following are equivalent:
\begin{itemize}
\item $X$ belongs to $\gamma_3$;
\item $\el(\alpha_1,X) = \el(\alpha_2,X)$;
\item $\el(\beta_1,X) = \el(\beta_2,X)$.
\end{itemize}
\end{lem}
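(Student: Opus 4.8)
The plan is to prove the three equivalences by showing that each of the two extremal-length conditions is equivalent to $X$ admitting an anti-conformal involution fixing the boundary marked point $x_3$, which is precisely the condition $X \in \gamma_3$. The forward implications are easy by conformal invariance: if $X \in \gamma_3$, then the reflection $\sigma_3$ swaps $x_1$ with $x_2$, hence swaps the arcs $\alpha_1 \leftrightarrow \alpha_2$ and $\beta_1 \leftrightarrow \beta_2$, so both equalities of extremal lengths hold. The substance is in the two converse directions.

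For the implication ``$\el(\alpha_1,X)=\el(\alpha_2,X) \Rightarrow X\in\gamma_3$'', I would mimic the proof of Lemma \ref{lem:equalissym}. Cut $X$ along the horizontal trajectory from the interior marked point $x_0$ (as in the quadratic-differential discussion above) to unfold it into a pentagon-shaped rectangular piece, or more directly: use the fact that the Jenkins--Strebel differential for $\alpha_3$ represents $X$ as a rectangle with the side containing $x_0$ folded in two, so that $x_1$ and $x_2$ sit at two corners and $x_3$ sits at the midpoint of the opposite side (this uses the structure theorem for rectangular structures on $\triangletimes$). Then, exactly as in Lemma \ref{lem:equalissym}, if the sub-segment $[x_2,x_3]$ were strictly shorter than $[x_3,x_1]$, reflecting across the perpendicular bisector of $[x_1,x_2]$ would embed the quadrilateral carrying $\alpha_1$ conformally and properly into the one carrying $\alpha_2$, forcing a strict inequality of extremal lengths by monotonicity, a contradiction; symmetry gives the reverse, so $x_3$ must be the midpoint and the reflection is the desired anti-conformal involution.

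For the implication ``$\el(\beta_1,X)=\el(\beta_2,X) \Rightarrow X\in\gamma_3$'', I would instead use a rectangular structure adapted to the $\beta$-arcs: the Jenkins--Strebel differential for the multiarc whose support is $\beta_1 \cup \beta_2$ (or, more cleanly, work with a folded $L$-shape and track how $\beta_1$ and $\beta_2$ cross its constituent rectangles). The point is to set up the same ``shorter side embeds into longer side'' comparison: realize $X$ so that a symmetry exchanging $\beta_1$ and $\beta_2$ would be forced by the equality of extremal lengths, again via monotonicity of extremal length under conformal embedding. Alternatively, and perhaps more efficiently, one can reduce this case to the previous one by noting that an anti-conformal involution of $X$ fixing $x_3$ is the \emph{only} way to realize $\sigma_3$; so it suffices to show the second and third conditions each individually force the existence of such an involution, and one may hope that a single rectangular-structure picture (the folded $L$-shape) displays both the $\alpha$- and $\beta$-comparisons simultaneously.

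The main obstacle I anticipate is the third condition. For the $\alpha$-arcs the reduction to a genuine Euclidean rectangle (via the folded-rectangle Jenkins--Strebel structure) makes the reflection argument essentially identical to Lemma \ref{lem:equalissym}. But $\beta_1$ and $\beta_2$ separate the interior puncture from the boundary, so the relevant extremal domains are not simply sub-rectangles of a rectangle; one must choose the right quadratic differential (likely the folded $L$-shape of Figure \ref{fig:surgery}, or the Strebel differential for $\beta_1 + \beta_2$) and carefully identify which conformal sub-pieces are being compared, making sure the embedding produced by reflecting across the appropriate bisector is proper and respects homotopy classes. Getting that geometric picture exactly right — and confirming that equality in the monotonicity estimate forces the reflection symmetry rather than merely some weaker coincidence — is where the real care is needed; once the correct picture is in hand, the contradiction argument is routine.
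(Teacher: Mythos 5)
Your overall strategy --- equality of extremal lengths plus a strict monotonicity comparison under a reflection forces the symmetry --- is the right one, and your forward direction is exactly the paper's. But there is a genuine gap in both converse directions, and it is the same gap: you never produce a representation of $X$ in which ``reflect so as to swap $x_1$ and $x_2$'' is an anti-conformal self-map of the marked surface away from $x_3$. In Lemma \ref{lem:equalissym} this came for free because four boundary points pin down a rectangle and the fifth point is the only floating datum. Here there are only three boundary points, so the rectangle you propose (the Jenkins--Strebel structure for $\alpha_3$) is pinned down only by also using the fold at $x_0$, and the position of that fold along the far side is a second unknown modulus alongside the position of $x_3$. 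Reflecting across the perpendicular bisector of $[x_1,x_2]$ therefore does not carry $X$ to anything comparable with $X$: it displaces the fold (hence $x_0$) as well as $x_3$, so the ``properly embedded quadrilateral'' comparison has nothing to bite on; and your parenthetical claim that $x_3$ sits at the midpoint of the opposite side is precisely the conclusion $X\in\gamma_3$, not a feature of the Jenkins--Strebel structure of a general $X$. The same unresolved issue is why your $\beta$ case stalls at ``one may hope.''

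The missing idea is a different, simpler canonical representation: map $X$ conformally onto the closed unit disk with $x_0=0$. The perpendicular bisector $L$ of the chord $[x_1,x_2]$ then passes through the center, so the reflection $R_L$ in $L$ is an anti-conformal automorphism of the marked disk which fixes $x_0$ and swaps $x_1$ with $x_2$; the only datum it can fail to preserve is $x_3$, and $x_3\in L$ is exactly the condition $X\in\gamma_3$. If $x_3\notin L$, say with $x_3$ closer to $x_1$, then $R_L$ carries the extremal rectangle for $\alpha_1$ to a rectangle of the same modulus homotopic to $\alpha_2$ which is visibly non-extremal (its boundary side is a proper subarc of the circular arc from $x_2$ to $x_3$), giving the strict inequality $\el(\alpha_1,X)>\el(\alpha_2,X)$; the identical one-line argument applied to the extremal rectangle for $\beta_2$ handles the $\beta$ case with no additional work. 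Once $R_L$ is available your monotonicity argument goes through essentially verbatim for both pairs of arcs, so I would replace the rectangle picture with this normalization rather than try to repair it.
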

\begin{proof}
Suppose that $X \in \gamma_3$. Then there is an anti-conformal involution of $X$ realizing the permutation $\sigma_3=(12)$ on the marked points. Since $\sigma_3(\alpha_1) = \alpha_2$, $\sigma_3(\beta_1) = \beta_2$  and extremal length is invariant under anti-conformal diffeomorphisms, we have $\el(\alpha_1,X) = \el(\alpha_2,X)$ and $\el(\beta_1,X) = \el(\beta_2,X)$.

Next, we show that if $X$ is not on $\gamma_3$, then the extremal lengths of $\alpha_1$ and $\alpha_2$ are different, and similarly for $\beta_1$ and $\beta_2$. To see this, map $X$ conformally onto the unit disk is such a way that $x_0 = 0$. Let $L$ be the perpendicular bisector of the chord $[x_1,x_2]$ and let $R_L$ be the reflection in that line. Since $X \notin \gamma_3$, the point $x_3$ does not lie on $L$. Suppose that $x_3$ is closer to $x_1$ than $x_2$. Then the embedded rectangle $U$ of smallest extremal length homotopic to $\alpha_1$ maps under $R_L$ to a rectangle of the same extremal length homotopic to $\alpha_2$. Moreover, $R_L(U)$ is not extremal for $\alpha_2$ because its side contained in the circular arc from $x_2$ to $x_3$ is properly contained in that arc. Thus 
$$
\el(\alpha_1,X) = \el(U) = \el(R_L(U)) > \el(\alpha_2,X).
$$
Similarly, the embedded rectangle $V$ of smallest extremal length homotopic to $\beta_2$ maps under $R_L$ to a rectangle homotopic to $\beta_1$ which is not extremal, so that
$$
\el(\beta_2,X) = \el(V) = \el(R_L(V)) > \el(\beta_1,X).
$$
If $x_2$ is closer to $x_3$ instead, the inequalities are reversed. 
\end{proof}

Of course, the statement still holds if the indices $1$, $2$ and $3$ are permuted arbitrarily.

\subsection{Extremal length estimates}

We are ready to prove that the geodesics $\eta_a$ and $\gamma_3$ intersect if $a$ is small enough.

\begin{prop} \label{prop:intersection2}
If $a \in \left(0, \frac{1}{2e^{\pi/2} - 1}\right)$, then $\eta_a$ intersects $\gamma_3$. More precisely, $\g_t\Phi_a$ belongs to $\gamma_3$ for some $t \in \left[0,\frac12\log \frac{1}{a}\right]$.
\end{prop}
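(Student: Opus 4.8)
The plan is to follow the proof of Proposition~\ref{prop:intersection} almost verbatim. By Lemma~\ref{lem:equalissym2}, a point $X\in\teich(\triangletimes)$ lies on $\gamma_3$ if and only if $\el(\alpha_1,X)=\el(\alpha_2,X)$, so it is enough to produce a zero of the continuous function
$$
f(t)=\el(\alpha_1,\g_t\Phi_a)-\el(\alpha_2,\g_t\Phi_a)
$$
on the interval $[0,\tfrac12\log\tfrac1a]$. I will show $f(0)\le 0$ and $f(\tfrac12\log\tfrac1a)\ge 0$ and then invoke the intermediate value theorem; the point so produced lies on $\eta_a$ by construction and has the claimed parameter.

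For $f(0)\le 0$, I estimate $\el(\alpha_1,\Phi_a)$ and $\el(\alpha_2,\Phi_a)$ separately. The quarter annulus $C_a$ exhibited in the proof of Proposition~\ref{prop:explicit2} already gives $\el(\alpha_1,\Phi_a)<\frac{\pi/2}{\log((1+a)/(2a))}$, and the hypothesis $a<\frac{1}{2e^{\pi/2}-1}$ is exactly the condition $\frac{1+a}{2a}>e^{\pi/2}$, that is, the condition that this upper bound is $<1$. For the opposite inequality I will show $\el(\alpha_2,\Phi_a)\ge 1$ by the first definition of extremal length, using the Euclidean metric on the polygon $\Phi_a$: since $\alpha_2$ must separate the interior marked point $x_0$ from the two vertices $x_1,x_3$ on the left side, every representative runs essentially the full height of $\Phi_a$ and hence has Euclidean length at least $1$, while $\area(\Phi_a)=1-\tfrac12(1-a)^2\le 1$. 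Combining, $\el(\alpha_1,\Phi_a)<1\le\el(\alpha_2,\Phi_a)$, so $f(0)<0$.

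For $f(\tfrac12\log\tfrac1a)\ge 0$ I pass to the conformally equivalent stretched polygon $\h_t\Phi_a$ with $e^{2t}=1/a$ and argue symmetrically. On one hand, the unit square $[0,1]\times[0,1]$ embeds in $\h_t\Phi_a$ in the homotopy class of $\alpha_2$, giving $\el(\alpha_2,\h_t\Phi_a)\le 1$. On the other hand, after this horizontal stretch the point $x_0$ has been carried far from the corner at $x_1$, which forces every representative of $\alpha_1$ to be long; combining a suitable such length estimate (obtained from an embedded rectangle together with the series law for extremal length, exactly as in Lemma~\ref{lem:upperalpha1} and the lemmata surrounding Proposition~\ref{prop:intersection}) with $\area(\h_t\Phi_a)=\frac{1+2a-a^2}{2a}$ yields $\el(\alpha_1,\h_t\Phi_a)\ge 1$, hence $f(\tfrac12\log\tfrac1a)\ge 0$. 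Continuity of $t\mapsto\el(\alpha_i,\g_t\Phi_a)$ and the intermediate value theorem then finish the proof. The main obstacle is the careful book-keeping of the (folded) geometry of $\Phi_a$ and $\h_t\Phi_a$: because one side of $\Phi_a$ is glued to itself at $x_0$, one must check which homotopy classes the candidate squares, rectangles, and annuli actually represent, and the lower bounds $\el(\alpha_2,\Phi_a)\ge 1$ and $\el(\alpha_1,\h_t\Phi_a)\ge 1$ require ruling out that a representative can be shortened below the naive bound by exploiting the fold. Once the right domains are identified, the estimates themselves are elementary, just as in the pentagon case.
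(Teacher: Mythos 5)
Your strategy coincides with the paper's: characterize $\gamma_3$ by the equation $\el(\alpha_1,\cdot)=\el(\alpha_2,\cdot)$ via Lemma \ref{lem:equalissym2}, show the difference of extremal lengths changes sign between $t=0$ and $t=\tfrac12\log\tfrac1a$, and invoke the intermediate value theorem. Your treatment of $t=0$ is sound. The upper bound on $\el(\alpha_1,\Phi_a)$ is the paper's, and the hypothesis $a<\tfrac{1}{2e^{\pi/2}-1}$ is indeed equivalent to $\tfrac{1+a}{2a}>e^{\pi/2}$. Your lower bound $\el(\alpha_2,\Phi_a)\ge 1$ differs from the paper's (which gets $\el(\alpha_2,\Phi_a)\ge\tfrac{2}{\pi}\log\tfrac{1+a}{2a}$ by noting that every representative of $\alpha_2$ crosses the quarter annulus $C_a$, an instance of Minsky's inequality), but it is correct and suffices: since the horizontal foliation of $\Phi_a$ is $a\alpha_3+\tfrac{1-a}{2}\beta_2$ and $i(\alpha_2,\alpha_3)=1$, $i(\alpha_2,\beta_2)=2$, every representative of $\alpha_2$ has vertical variation, hence Euclidean length, at least $a+2\cdot\tfrac{1-a}{2}=1$, while $\area(\Phi_a)=1-\tfrac12(1-a)^2\le 1$.

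The gap is in your second estimate, $\el(\alpha_1,\h_t\Phi_a)\ge 1$ at $K:=e^{2t}=1/a$. Combining a length lower bound with the \emph{full} area $\area(\h_t\Phi_a)=\tfrac{1+2a-a^2}{2a}$ cannot succeed, for exactly the reason you flag at the end: the fold produces short representatives of $\alpha_1$. Concretely, run from the left edge at height close to $1$ horizontally to a point of the diagonal side just past the corner $(1,1)$ of $\h_t\Phi_a$, pass through the central-symmetry identification (emerging near the diagonal's other endpoint $(1/a,a)$), and drop to the bottom edge just left of $x_2$; one checks that this arc separates $\{x_0,x_1\}$ from $\{x_2,x_3\}$, so the minimal Euclidean length $L$ of representatives of $\alpha_1$ is about $1+a$, and $L^2/\area(\h_t\Phi_a)=O(a)$, which is far below $1$. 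So the first definition with the full Euclidean metric fails by an unbounded factor, and no series-law refinement that keeps the whole area in the denominator can rescue it. The correct step --- the paper's Lemma \ref{lem:loweralpha1} --- is to use the metric supported on the square $[0,Ka]\times[0,1]=[0,1]\times[0,1]$ (the cylinder swept by the $\alpha_2$-leaves of the vertical foliation) and observe that every representative of $\alpha_1$ crosses this square from its left to its right side while staying inside it: it starts on the left edge of the surface, must meet the vertical leaf $\{Ka\}\times[0,1]$ because $i(\alpha_1,\alpha_2)=1$, and cannot exit through the top or bottom sides of the square since those lie on the boundary of the surface. This yields $\el(\alpha_1,\h_t\Phi_a)\ge Ka=1$ with no area bookkeeping. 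With that replacement your argument closes and agrees with the paper's.
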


There are four inequalities to prove.

\begin{lem}
For every $a\in (0,1)$, we have 
$$
\el(\alpha_1, \Phi_a)  \leq \frac{\pi/2}{\log\left( 1+ \frac{1-a}{2a} \right)}.
$$
\end{lem}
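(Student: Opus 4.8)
The cleanest route is to invoke the quarter-annulus already used in the proof of Proposition~\ref{prop:explicit2}. Recall from there the region $C_a = B_a \cup R(B_a) \subset \Phi_a$, where $B_a$ is the intersection of $\Phi_a$ with the round annular sector centered at $1+a$ with radii $a\sqrt 2$ and $a\sqrt2 + \tfrac{1-a}{\sqrt2} = \tfrac{1+a}{\sqrt 2}$, and $R$ is the reflection in the diagonal $y=x$. The plan is: (i) check that $C_a$ is a quarter of a round annulus conformally embedded in $\Phi_a$; (ii) check that its concentric circular arcs are all homotopic to $\alpha_1$ rel the marked points, so that $C_a$ is an embedded cylinder in the homotopy class of $\alpha_1$; (iii) compute the extremal length around $C_a$; (iv) conclude via the second (cylinder) definition of extremal length.

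For (i), the key observation is that $B_a$ and $R(B_a)$ each carry half of the folded side $[1+ia,a+i]$ of $\Phi_a$ on their boundary, namely $[1+ia,m]$ and $[m,a+i]$, where $m=\tfrac12(1+a)(1+i)=x_0$ is its midpoint, and that these two boundary arcs are glued to each other in $\Phi_a$ precisely by the central symmetry at $m$. That central symmetry agrees with $R$ along this segment, and $R$ carries the center $1+a$ of $B_a$ to the center $(1+a)i$ of $R(B_a)$ radius-preservingly, so the gluing matches the radial structures of the two sectors; hence $B_a\cup R(B_a)$, with the conformal structure induced from $\Phi_a$, is a single round annular sector of opening angle $\pi/2$ between radii $a\sqrt2$ and $\tfrac{1+a}{\sqrt2}$, conformally embedded in $\Phi_a$ (see Figure~\ref{fig:annuli}). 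One also verifies, for every $a\in(0,1)$, that this sector genuinely lies inside $\Phi_a$ — a routine check in coordinates, as its extreme points are $1+ia$, $a+i$, $m$, and finitely many points interior to the sides $[0,1]$ and $[0,i]$.

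For (ii)--(iv): a concentric circular arc of $C_a$ runs the full $\pi/2$ of the sector, starting on $[0,1]$, curving past the corner $1+ia\sim a+i$, and ending on $[0,i]$, so it cuts off a disk containing $x_2$ and $x_3$ and no other marked point; it is therefore an essential arc homotopic to $\alpha_1$. The extremal length around a round annular sector of opening angle $\theta$ and radii ratio $r$ equals $\theta/\log r$: passing to the coordinate $\log(z-\text{center})$ turns the sector into a Euclidean rectangle of width $\theta$ and height $\log r$ in which the circular arcs become the loops going around, and one applies the formula for the extremal length around a Euclidean cylinder \cite[p.10]{AhlforsLectures}. Here $\theta=\pi/2$ and $r = \tfrac{(1+a)/\sqrt2}{a\sqrt2} = 1+\tfrac{1-a}{2a}$. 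Applying the second definition of extremal length to the single cylinder $C_a$ with weight $1$ gives $\el(\alpha_1,\Phi_a)\le \el(C_a) = \tfrac{\pi/2}{\log(1+\frac{1-a}{2a})}$, as claimed.

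The only genuinely geometric point — and the step I would be most careful to write out — is (i), the verification that $B_a$ and $R(B_a)$ patch together into an embedded round quarter-annulus across the folded side of $\Phi_a$; everything else is the homotopy classification of the arcs and the standard logarithmic-coordinate computation of extremal length. In fact this inequality (with strict inequality, which is stronger) was already obtained inside the proof of Proposition~\ref{prop:explicit2}, so the lemma may alternatively be proved simply by citing that computation.
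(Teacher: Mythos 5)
Your proof is correct and is essentially the paper's: the paper's own proof of this lemma simply says ``See the proof of Proposition \ref{prop:explicit2}'', where exactly this quarter-annulus $C_a = B_a \cup R(B_a)$ is constructed, its concentric arcs are identified as representatives of $\alpha_1$, and the bound $\el(\alpha_1,\Phi_a) < \frac{\pi/2}{\log\left(1+\frac{1-a}{2a}\right)}$ is derived. You have merely written out in more detail the gluing verification that the paper leaves implicit, and your closing remark that one could just cite that computation is precisely what the paper does.
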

\begin{proof}
See the proof of Proposition \ref{prop:explicit2}.
\end{proof}

\begin{lem}
For every $a\in (0,1)$, we have 
$$
\el(\alpha_2, \Phi_a)  \geq \frac{2}{\pi} \log\left( 1+ \frac{1-a}{2a} \right).
$$
\end{lem}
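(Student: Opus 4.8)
The plan is to mirror the argument for $\el(\alpha_1,\Phi_a)$ in Proposition~\ref{prop:explicit2}, using the same quarter annulus $C_a\subset\Phi_a$ but exploiting its \emph{radial} family of arcs rather than its family of concentric circular arcs. Recall from that proof that $C_a=B_a\cup R(B_a)$ is a circular sector of angular width $\pi/2$ and radii $a\sqrt2$ and $a\sqrt2+\frac{1-a}{\sqrt2}=\frac{1+a}{\sqrt2}$, that the gluing along the folded side of $\Phi_a$ matches up the radial coordinate so that $C_a$ is genuinely isometric to the flat sector $\{\,a\sqrt2<|w|<\frac{1+a}{\sqrt2},\ 0<\arg w<\pi/2\,\}$, and that its concentric circular arcs are all homotopic to $\alpha_1$. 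I would equip $\Phi_a$ with the Borel conformal metric $\rho$ equal to $|dw|/|w|$ on $C_a$ (in the sector coordinate $w$) and $0$ elsewhere. In this metric $C_a$ becomes a Euclidean rectangle of dimensions $\log\!\big(1+\tfrac{1-a}{2a}\big)\times\tfrac\pi2$, so $\area(\rho)=\tfrac\pi2\log\!\big(1+\tfrac{1-a}{2a}\big)$, and every path in $C_a$ joining the inner boundary circle $|w|=a\sqrt2$ to the outer boundary circle $|w|=\frac{1+a}{\sqrt2}$ has $\rho$-length at least $\log\!\big(1+\tfrac{1-a}{2a}\big)$.

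The key step is then to show that every arc homotopic to $\alpha_2$ on $\Phi_a$ contains a subarc lying in $C_a$ and joining the inner to the outer boundary circle. Granting this, $\ell_\rho\,[\alpha_2]\ge\log\!\big(1+\tfrac{1-a}{2a}\big)$, and the first definition of extremal length gives
\[
\el(\alpha_2,\Phi_a)\ \ge\ \frac{\big(\ell_\rho\,[\alpha_2]\big)^2}{\area(\rho)}\ \ge\ \frac{\big(\log(1+\tfrac{1-a}{2a})\big)^2}{\tfrac\pi2\log(1+\tfrac{1-a}{2a})}\ =\ \frac2\pi\log\!\Big(1+\frac{1-a}{2a}\Big),
\]
as desired. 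To establish the key step, observe that $C_a$ separates $\Phi_a$ into the region $D_{\mathrm{in}}$ bounded by the inner circle and the region $D_{\mathrm{out}}$ bounded by the outer circle; inspecting Figure~\ref{fig:annuli} (consistently with the circular arcs of $C_a$ being homotopic to $\alpha_1$) one sees $x_2,x_3\in D_{\mathrm{in}}$ and $x_0,x_1\in D_{\mathrm{out}}$, so $C_a$ separates $\{x_2,x_3\}$ from $\{x_0,x_1\}$. Since $\alpha_2$ instead separates $\{x_1,x_3\}$ from $\{x_0,x_2\}$, no representative of $\alpha_2$ can lie in $D_{\mathrm{in}}$ or $D_{\mathrm{out}}$ (arcs there are trivial or homotopic to $\alpha_1$), and an elementary surface-topology argument — using that an essential arc of $C_a$ running between its two radial edges would be homotopic to $\alpha_1$ — rules out all crossings of $C_a$ except the radial ones. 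Finally, the reflection $R$ in the line $y=x$ fixes $\Phi_a$ and realizes $\sigma_1=(23)$, which swaps $\alpha_2$ and $\alpha_3$, so the same inequality holds with $\alpha_3$ in place of $\alpha_2$.

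The only genuinely delicate point is the topological claim that no representative of $\alpha_2$ can be homotoped so as to avoid crossing $C_a$ the long way; making this rigorous requires keeping careful track of the positions of $x_0,x_1,x_2,x_3$ relative to $C_a$ and of the identification along the folded side of $\Phi_a$. Everything else — the compatibility of the fold with the radial coordinate and the computation of the modulus of a circular sector — is already contained in the proof of Proposition~\ref{prop:explicit2}, and the extremal-length estimate itself is a one-line application of the first definition.
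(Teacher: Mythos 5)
Your proof is correct and takes essentially the same route as the paper: both bound $\el(\alpha_2,\Phi_a)$ from below by the extremal length \emph{across} the quarter annulus $C_a$, namely $\frac{2}{\pi}\log\bigl(1+\frac{1-a}{2a}\bigr)$, your metric $|dw|/|w|$ being exactly the extremal metric for that crossing family. The one point you flag as delicate --- that every representative of $\alpha_2$ must contain a subarc of $C_a$ joining the inner circle to the outer circle --- is handled in the paper in one line, with no separation argument: since $i(\alpha_1,\alpha_2)>0$, every representative of $\alpha_2$ meets every representative of $\alpha_1$, and every concentric circular arc of $C_a$ is such a representative. (Equivalently, as the paper remarks, the bound is an instance of Minsky's inequality $\el(F,X)\,\el(G,X)\ge i(F,G)^2$, combined with the upper bound on $\el(\alpha_1,\Phi_a)$ from the preceding lemma.)
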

\begin{proof}
Every representative of $\alpha_2$ intersects every representative of $\alpha_1$ at least once. Thus every representative of $\alpha_2$ has to cross the quarter annulus $C_a$ defined in the proof of Proposition \ref{prop:explicit2}. Hence
$$
\el(\alpha_2, \Phi_a) \geq \el(\text{across }C_a) = \frac{\log\left( 1+ \frac{1-a}{2a} \right)}{ \pi /2}.
$$
This is an instance of the inequality $$\el(F, X)\el(G, X) \geq i(F,G)^2$$  due to Minsky \cite{MinskyIneq}.
\end{proof}

The next corollary follows immediately.

\begin{cor}
If $a \in \left(0, \frac{1}{2e^{\pi/2} - 1}\right)$, then $\el(\alpha_1, \Phi_a) \leq \el(\alpha_2, \Phi_a)$.
\end{cor}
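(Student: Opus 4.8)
The plan is to derive this as an immediate consequence of the two preceding lemmas, the only work being an elementary manipulation of the inequality defining the interval of admissible $a$. The two lemmas give a single quantity controlling everything: set $L = L(a) = \log\!\left(1 + \frac{1-a}{2a}\right)$, so that the first lemma reads $\el(\alpha_1, \Phi_a) \leq \frac{\pi/2}{L}$ and the second reads $\el(\alpha_2, \Phi_a) \geq \frac{2}{\pi}\,L$. Thus it suffices to show $\frac{\pi/2}{L} \leq \frac{2}{\pi}\,L$, i.e. $\frac{\pi^2}{4} \leq L^2$, i.e. $L \geq \frac{\pi}{2}$ (both sides being positive for $a \in (0,1)$).

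Next I would check that the hypothesis on $a$ is exactly the condition $L \geq \pi/2$. Since $a \mapsto \frac{1-a}{2a}$ is strictly decreasing on $(0,1)$, so is $L(a)$, and one computes $L(a) \geq \frac{\pi}{2} \iff 1 + \frac{1-a}{2a} \geq e^{\pi/2} \iff \frac{1-a}{2a} \geq e^{\pi/2}-1 \iff 1 - a \geq 2a\bigl(e^{\pi/2}-1\bigr) \iff a \leq \frac{1}{2e^{\pi/2}-1}$. Hence for $a$ in the stated interval we have $L(a) \geq \pi/2$, so $L(a)^2 \geq \pi^2/4$, so $\frac{\pi/2}{L(a)} \leq \frac{2}{\pi}L(a)$.

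Finally I would chain the inequalities:
$$
\el(\alpha_1, \Phi_a) \ \leq\ \frac{\pi/2}{L(a)} \ \leq\ \frac{2}{\pi}\,L(a)\ \leq\ \el(\alpha_2, \Phi_a),
$$
which is the desired conclusion. There is no real obstacle here: the content lies entirely in the two geometric lemmas already established (the quarter-annulus upper bound for $\el(\alpha_1,\Phi_a)$ and Minsky's intersection inequality giving the lower bound for $\el(\alpha_2,\Phi_a)$), and the corollary is just the observation that the threshold $\frac{1}{2e^{\pi/2}-1}$ was reverse-engineered precisely so that these two bounds cross.
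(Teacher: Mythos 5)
Your proposal is correct and is exactly the paper's argument: the paper simply states that the corollary ``follows immediately'' from the two preceding lemmata, and your computation that the hypothesis $a < \frac{1}{2e^{\pi/2}-1}$ is equivalent to $\log\bigl(1+\frac{1-a}{2a}\bigr) \geq \pi/2$, which makes the upper bound for $\el(\alpha_1,\Phi_a)$ at most the lower bound for $\el(\alpha_2,\Phi_a)$, is precisely the omitted verification. Nothing further is needed.
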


We then show that the reverse inequality holds for $t$ large enough.

\begin{lem} \label{lem:loweralpha1}
For every $a\in (0,1)$ and $t\in \RR$ we have
$$
\el(\alpha_1, \g_t\Phi_a)  \geq e^{2t}a.
$$
\end{lem}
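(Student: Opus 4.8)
The plan is to bound $\el(\alpha_1,\g_t\Phi_a)$ from below using the first definition of extremal length with a conformal metric that is Euclidean on a carefully chosen sub-rectangle of $\g_t\Phi_a$ and zero elsewhere; once the metric is set up, the only real content is a topological statement that every representative of $\alpha_1$ must cross that rectangle the long way.

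First I would pass from $\g_t\Phi_a$ to the conformally equivalent surface $\h_t\Phi_a$. Writing $K=e^{2t}$, this is the polygon with vertices $0$, $K$, $K+ia$, $Ka+i$, $i$, with the slanted side $[K+ia,\,Ka+i]$ folded at its midpoint, which is exactly the interior marked point $x_0=\frac{1}{2}(1+a)(K+i)$. Since $0<a<1$ we have $\re x_0=\frac{K(1+a)}{2}>Ka$, so $x_0$ lies to the right of the closed rectangle $S=[0,Ka]\times[0,1]$; moreover $S$ coincides with $\{\,z\in\h_t\Phi_a:\re z\leq Ka\,\}$, so $S$ is embedded in $\h_t\Phi_a$ and has area $Ka$. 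Let $\rho$ be the metric $|dz|$ on $S$, extended by $0$ to $\h_t\Phi_a\setminus S$, so that $\area(\rho)=Ka$.

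The crux is to show that $\int_\gamma\rho\geq Ka$ for every arc $\gamma$ representing $\alpha_1$. Recall that $\alpha_1$ separates the boundary marked points $x_2,x_3$ from the pair $x_0,x_1$, so $\gamma$ has one endpoint on the side $[x_1,x_2]$ (the bottom edge) and one on the side $[x_3,x_1]$ (the left edge $\{0\}\times[0,1]$), and together with the boundary path through the vertex $x_1$ it bounds a disk $D$ with $x_0\in D$. That boundary path has real part at most $\xi$, where $(\xi,0)$ is the endpoint of $\gamma$ on the bottom edge, while $x_0\in D$ has real part $\frac{K(1+a)}{2}$; hence $\gamma$ attains real part at least $\frac{K(1+a)}{2}>Ka$ (either $\xi$ is already that large, or the maximum of $\re$ on $\overline{D}$ is realised along $\gamma$). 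Since the other endpoint of $\gamma$ has real part $0$, connectedness forces $\gamma$ to contain a sub-arc lying inside $S$ that joins $\{0\}\times[0,1]$ to $\{Ka\}\times[0,1]$, and such a sub-arc has Euclidean length at least $Ka$. Therefore $\ell_\rho[\alpha_1]\geq Ka$, and the first definition of extremal length yields
$$
\el(\alpha_1,\g_t\Phi_a)=\el(\alpha_1,\h_t\Phi_a)\geq \frac{(\ell_\rho[\alpha_1])^2}{\area(\rho)}\geq\frac{(Ka)^2}{Ka}=Ka=e^{2t}a.
$$

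The step I expect to be the main obstacle is this topological claim: one has to be careful about the cyclic arrangement of the marked points in the folded-polygon model and argue that enclosing the interior marked point $x_0$ really forces the arc $\gamma$ itself --- and not merely the boundary path near $x_1$ --- to reach far to the right of $S$. Once that is in place, the rest is the same routine one-line estimate used in the preceding lemmas, and the argument is insensitive to the sign of $t$.
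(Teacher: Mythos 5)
Your proof follows the same route as the paper's, which simply asserts that every representative of $\alpha_1$ must cross the rectangle $[0,e^ta]\times[0,e^{-t}]$ horizontally and compares with the extremal length $e^{2t}a$ of that rectangle; your normalization by $\h_t$, the choice of $\rho$, and the final computation are all correct. The one step that does not hold up as written is the justification of the crossing claim by the maximum principle. The function $\re$ does not descend to a single-valued function on $\h_t\Phi_a$: the disk $D$ contains a full neighbourhood of $x_0$, hence points of the fold locus, and at such a point the two identified preimages $z$ and $(1+a)(K+i)-z$ have different real parts. If you lift $D$ to the plane polygon so that $\re$ becomes an honest harmonic function, the boundary of the lifted region consists not only of $\gamma$ and the boundary path through $x_1$ but also of a portion of the slanted side $[K+ia,\,Ka+i]$, and the maximum of $\re$ may perfectly well be attained there (real parts on that side range up to $K$). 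So the dichotomy ``either $\xi\geq\frac{K(1+a)}{2}$ or the maximum is realised along $\gamma$'' omits a case, and the argument does not close as stated.

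The claim itself is true, and the cheap repair is a separation argument rather than a harmonic one. Since $\gamma$ represents $\alpha_1$ it separates $x_0$ from $x_2$. The fold locus is an embedded arc in the surface from $x_0$ to the identified boundary point $K+ia\sim Ka+i$; concatenating it with the edge from that corner down to $x_2=K$ gives a path from $x_0$ to $x_2$ lying entirely in $\{\re z\geq Ka\}$, because every point of the slanted side has real part in $[Ka,K]$. Hence $\gamma$ must meet $\{\re z\geq Ka\}$. As $\{z\in\h_t\Phi_a:\re z\leq Ka\}=S$ is disjoint from the fold, the portion of $\gamma$ from its endpoint on $\{0\}\times[0,1]$ up to the first time it reaches $\re z=Ka$ is an arc in $S$ joining the two vertical edges, of Euclidean length at least $Ka$; the rest of your estimate then goes through verbatim.
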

\begin{proof}
Every arc homotopic to $\alpha_1$ in $\g_t\Phi_a$ has to cross the rectangle $[0,e^{t}a]\times [0,e^{-t}]$ hori\-zontally, so the extremal length of $\alpha_1$ is at least the extremal length of that rectangle.
\end{proof}

\begin{lem}
For every $a\in (0,1)$ and $t\in \RR$ we have
$$
\el(\alpha_2, \g_t\Phi_a)  \leq \frac{1}{e^{2t}a}.
$$
\end{lem}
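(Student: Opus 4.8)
The plan is to bound $\el(\alpha_2, \g_t\Phi_a)$ from above by exhibiting one embedded rectangle inside $\g_t\Phi_a$ whose core arcs lie in the class $\alpha_2$, and then reading off its extremal length. This is the upper-bound counterpart of the preceding lemma, which obtained a lower bound for $\alpha_1$ from a rectangle that every representative of $\alpha_1$ is forced to cross.

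Recall (from the discussion just before Proposition \ref{prop:explicit2}) that the vertical foliation of $\Phi_a$ equals $a\,\alpha_2 + \tfrac{1-a}{2}\,\beta_3$. Concretely, in the model where $\Phi_a$ is the unit square with the corner cut off along $[1+ia, a+i]$, the vertical leaves of real part in $(0,a)$ are the segments from $(x,0)$ to $(x,1)$; these miss the folded side, so together they sweep out an embedded rectangle $R = [0,a]\times[0,1]\subseteq\Phi_a$, and each of them is an essential arc separating $\{x_1,x_3\}$ from $\{x_0,x_2\}$, hence homotopic to $\alpha_2$. Applying $\g_t$ carries $R$ to the embedded rectangle $R_t = [0,e^{t}a]\times[0,e^{-t}]$ of $\g_t\Phi_a$, whose vertical crossings remain homotopic to $\alpha_2$.

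Now the vertical crossings of $R_t$ form a subfamily of the family of all curves homotopic to $\alpha_2$ in $\g_t\Phi_a$, so monotonicity of extremal length (equivalently, the second definition of extremal length applied to the single weighted arc $1\cdot\alpha_2$ together with $R_t$) gives
$$
\el(\alpha_2, \g_t\Phi_a)\ \le\ \el(R_t)\ =\ \frac{e^{-t}}{e^{t}a}\ =\ \frac{1}{e^{2t}a},
$$
where $\el(R_t)$ is the extremal length of the arcs crossing $R_t$ vertically, namely its vertical extent $e^{-t}$ divided by its horizontal extent $e^{t}a$. (The same bookkeeping can be done in the conformally equivalent copy $\h_t\Phi_a = [0,e^{2t}a]\times[0,1]$, where this rectangle has vertical extent $1$ and horizontal extent $e^{2t}a$.) There is no real obstacle here beyond the labelling: one must check that the band of vertical leaves at hand is indeed the one named $\alpha_2$ (not $\alpha_3$ or a $\beta_j$) and that $\alpha_2$ genuinely sits in the \emph{vertical} foliation of $\Phi_a$, so that the Teichm\"uller stretch $\g_t$ shortens the core arcs while widening the band, forcing the bound down to $0$ as $t\to\infty$.
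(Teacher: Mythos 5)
Your proposal is correct and is essentially the paper's own argument: the paper likewise bounds $\el(\alpha_2,\g_t\Phi_a)$ from above by the vertical extremal length $e^{-t}/(e^{t}a)=1/(e^{2t}a)$ of the embedded rectangle $[0,e^{t}a]\times[0,e^{-t}]$ whose vertical segments are homotopic to $\alpha_2$. You simply make explicit the identification of that band with the $\alpha_2$-part of the vertical foliation $a\alpha_2+\frac{1-a}{2}\beta_3$, which the paper leaves implicit.
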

\begin{proof}
The vertical segments in $[0,e^{t}a]\times [0,e^{-t}]$ are homotopic to $\alpha_2$ so the extremal length of $\alpha_2$ is bounded above by the (vertical) extremal length of that rectangle.
\end{proof}

We get obtain the following as a consequence.

\begin{cor}
If $a \in (0,1)$ and $t \geq \frac12\log \frac{1}{a}$, then $\el(\alpha_1, \g_t \Phi_a) \geq \el(\alpha_2, \g_t \Phi_a)$.
\end{cor}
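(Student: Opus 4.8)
The plan is to derive the inequality simply by concatenating the two lemmata immediately preceding the statement. Lemma~\ref{lem:loweralpha1} supplies the lower bound $\el(\alpha_1,\g_t\Phi_a)\geq e^{2t}a$, and the lemma just above supplies the upper bound $\el(\alpha_2,\g_t\Phi_a)\leq 1/(e^{2t}a)$. So everything reduces to checking that the hypothesis $t\geq\tfrac12\log\tfrac1a$ guarantees $e^{2t}a\geq 1/(e^{2t}a)$.

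This remaining point is elementary. Since $e^{2t}a>0$, the inequality $e^{2t}a\geq 1/(e^{2t}a)$ is equivalent to $(e^{2t}a)^2\geq 1$, hence to $e^{2t}a\geq 1$, hence to $e^{2t}\geq 1/a$, hence to $2t\geq\log(1/a)$ --- which is precisely the assumed bound on $t$. Chaining the inequalities gives
\[
\el(\alpha_1,\g_t\Phi_a)\ \geq\ e^{2t}a\ \geq\ \frac{1}{e^{2t}a}\ \geq\ \el(\alpha_2,\g_t\Phi_a),
\]
which is the assertion.

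I do not expect any genuine obstacle: the two bounds have already been established, and the only step requiring a moment's care is the passage from $(e^{2t}a)^2\geq 1$ to $e^{2t}a\geq 1$, which is legitimate because $e^{2t}a$ is positive. With this corollary in hand, the corollary preceding it, Lemma~\ref{lem:equalissym2}, and the intermediate value theorem together yield Proposition~\ref{prop:intersection2}, exactly as Corollaries~\ref{cor:ineq} and~\ref{cor:reverse} combined to give Proposition~\ref{prop:intersection} in the pentagon case.
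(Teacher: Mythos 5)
Your proof is correct and is exactly the argument the paper intends: the corollary is stated there as an immediate consequence of the two preceding lemmata, with the hypothesis $t\geq\tfrac12\log\tfrac1a$ serving precisely to guarantee $e^{2t}a\geq 1\geq 1/(e^{2t}a)$. Nothing to add.
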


In turn, the two corollaries imply that $\eta_a$ intersects $\gamma_3$.

\begin{proof}[Proof of Proposition \ref{prop:intersection2}]
If $a \in \left(0, \frac{1}{2e^{\pi/2} - 1}\right)$ then  $\el(\alpha_1, g_t\Phi_a) \leq \el(\alpha_2, \g_t\Phi_a)$ at $t=0$, while the inequality is reversed at $t = \frac12\log \frac{1}{a}$. By the intermediate value theo\-rem, the equality $\el(\alpha_1, g_t\Phi_a) = \el(\alpha_2, \g_t\Phi_a)$ occurs for some $t \in \left[0,\frac12\log \frac{1}{a}\right]$. By Lemma \ref{lem:equalissym2}, equality of extremal lengths implies $\g_t\Phi_a \in \gamma_3$.
\end{proof}

Since $\eta_a$ intersects $\gamma_3$, it also intersects $\sigma_3(\eta_a)$ at the same point. By applying $\sigma_1$, we see that $\sigma_1(\eta_a)=\eta_a $ intersects $\sigma_1 \sigma_3(\eta_a)= \sigma_1 \sigma_3 \sigma_1 (\eta_a) = \sigma_2(\eta_a)$. Similarly, $\sigma_2(\eta_a)$ and $\sigma_3(\eta_a)$ intersect. Thus the intersection $Q_a$ of the corresponding half-planes $U_a$, $\sigma_2(U_a)$ and $\sigma_3(U_2)$ containing the origin is a geodesic triangle. This, together with the remarks at the end of subsection \ref{subsec:trisym}, completes the proof of Theorem \ref{thm:exhaustiontri}.

\subsection{Hexagons in the space of punctured triangles}

It turns out that the triangles $Q_a$ are bad for estimating the divergence between geodesic rays in $\teich(\triangletimes)$. Indeed, one can check that the inner radius of $Q_a$ is of order of $\log \log \frac1a$ while its outer radius and perimeter are of order $\log \frac 1a$. Following the same argument as for $\teich(\pentagon)$ would only yield that the divergence is at most exponential. But the divergence is not exponential; the triangles $\partial Q_a$ are simply inefficient paths. We replace them by more efficient hexagons.

Given $a>0$, let $\Psi_a$ be the rectangular structure on $\triangletimes$ with horizontal foliation $a \alpha_1 + \beta_2$ and vertical foliation $a \beta_3 +  \alpha_2$. We can obtain $\Psi_a$ by taking the $L$-shape $[0,1] \times [0, 1+ a] \cup [1,2(1+a)]\times [0,1]$, folding the bottom side $[0,2(1+a)]\times \{0\}$ in two, and labelling the vertices appropriately  (see Figure \ref{fig:psia}).

\begin{figure}[htp] 
\includegraphics[scale=1]{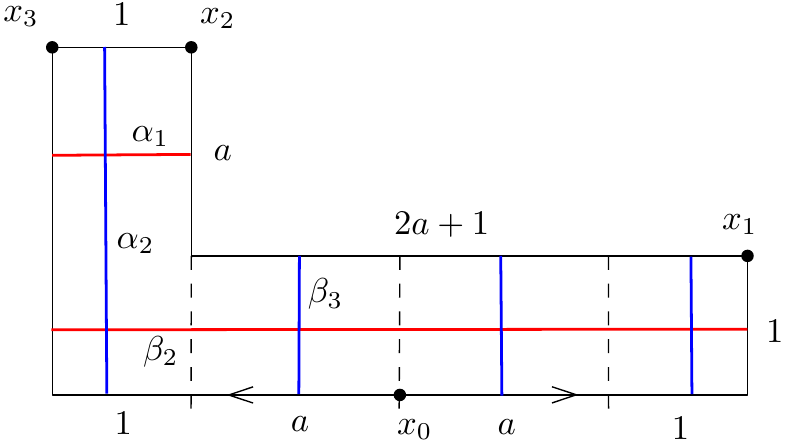}
\caption{The rectangular structure $\Psi_a$ on $\triangletimes$ with horizontal foliation $a \alpha_1 + \beta_2$ and vertical foliation $a \beta_3 +  \alpha_2$.}  \label{fig:psia}
\end{figure}

Let $\nu_a = \{ \g_t \Psi_a \mid t \in \RR \}$ be the Teichm\"uller geodesic cotangent to $\Psi_a$. We will show that $\nu_a$ intersects $\gamma_1$ and $\gamma_3$.

\begin{prop} \label{prop:intersection3}
If $a \geq 2$, then $\nu_a$ intersects $\gamma_1$ and $\gamma_3$. More precisely, $\g_t\Psi_a$ belongs to $\gamma_1$ for some $t \in \left[- \frac{1}{2}\log(2(1+a)), 0 \right]$ and $\g_t\Psi_a$ belongs to $\gamma_3$ for some $t \in \left[0 , \frac{1}{2}\log(2(1+a)) \right]$.
\end{prop}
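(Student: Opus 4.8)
The plan is to run the same intermediate-value argument used for Propositions \ref{prop:intersection} and \ref{prop:intersection2}. Recall that $\nu_a$ is the Teichm\"uller geodesic cotangent to $\Psi_a$, whose horizontal foliation is $a\alpha_1+\beta_2$ and whose vertical foliation is $a\beta_3+\alpha_2$; so the arc classes $\alpha_1,\alpha_2,\beta_2,\beta_3$ are all carried by explicit bands of the defining rectangular structure. By Lemma \ref{lem:equalissym2} we have $\g_t\Psi_a\in\gamma_3$ as soon as $\el(\alpha_1,\g_t\Psi_a)=\el(\alpha_2,\g_t\Psi_a)$, and by its permuted form $\g_t\Psi_a\in\gamma_1$ as soon as $\el(\beta_2,\g_t\Psi_a)=\el(\beta_3,\g_t\Psi_a)$. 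Since extremal length of a fixed arc class is continuous along the geodesic $t\mapsto\g_t\Psi_a$, it suffices to show, for $a\ge2$, that $\el(\alpha_1,\g_t\Psi_a)-\el(\alpha_2,\g_t\Psi_a)$ is $\le0$ at $t=0$ and $>0$ at $t=\tfrac12\log\!\bigl(2(1+a)\bigr)$, and that $\el(\beta_2,\g_t\Psi_a)-\el(\beta_3,\g_t\Psi_a)$ is $\ge0$ at $t=0$ and $<0$ at $t=-\tfrac12\log\!\bigl(2(1+a)\bigr)$; the intermediate value theorem and Lemma \ref{lem:equalissym2} then finish the proof.

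The first step is to read off, from the $L$-shape-with-folded-bottom model of $\Psi_a$, the embedded Euclidean rectangles carrying the four classes: $\alpha_1$ is carried by the $1\times a$ rectangle in the vertical leg, $\beta_2$ by the $2(1+a)\times1$ rectangle spanning the whole figure, $\beta_3$ by an $a\times2$ rectangle, and $\alpha_2$ by a $1\times(2+a)$ rectangle; the last two are obtained by cutting the surface open along the horizontal saddle connection from $x_0$ and unfolding, which yields genuine embedded rectangles because the singular point $x_0$ at the end of that saddle connection lies in the interior of neither band. Applying $\g_t=\left(\begin{smallmatrix} e^t & 0 \\ 0 & e^{-t}\end{smallmatrix}\right)$ multiplies the horizontal side of each rectangle by $e^t$ and the vertical one by $e^{-t}$ (one may also use the conformally equivalent $\h_t=\left(\begin{smallmatrix} e^{2t} & 0 \\ 0 & 1\end{smallmatrix}\right)$, as in the pentagon case).

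For the $\gamma_3$ intersection: at $t=0$ the second definition of extremal length applied to the $\alpha_1$-rectangle gives $\el(\alpha_1,\Psi_a)\le1/a$, while Minsky's inequality $\el(\alpha_1,\Psi_a)\,\el(\alpha_2,\Psi_a)\ge i(\alpha_1,\alpha_2)^2\ge1$ (distinct $\alpha$-classes are non-disjoint, forming one bipartition class of the hexagon $\pmf(\triangletimes)$) gives $\el(\alpha_2,\Psi_a)\ge a$, so $\el(\alpha_1,\Psi_a)\le\el(\alpha_2,\Psi_a)$ once $a\ge1$. At $t=\tfrac12\log\!\bigl(2(1+a)\bigr)$, so $e^{2t}=2(1+a)$, the stretched $\alpha_2$-rectangle gives $\el(\alpha_2,\g_t\Psi_a)\le(2+a)/e^{2t}=(2+a)/\bigl(2(1+a)\bigr)<1$, whence Minsky's inequality again gives $\el(\alpha_1,\g_t\Psi_a)\ge e^{2t}/(2+a)=2(1+a)/(2+a)>1$, reversing the inequality. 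The $\gamma_1$ intersection is parallel, with $(\alpha_1,\alpha_2)$ replaced by $(\beta_2,\beta_3)$ and the flow reversed: at $t=0$ the $\beta_3$-rectangle gives $\el(\beta_3,\Psi_a)\le2/a$ and $\el(\beta_2,X)\,\el(\beta_3,X)\ge i(\beta_2,\beta_3)^2=4$ gives $\el(\beta_2,\Psi_a)\ge2a$, so $\el(\beta_2,\Psi_a)\ge\el(\beta_3,\Psi_a)$ for $a\ge1$; at $t=-\tfrac12\log\!\bigl(2(1+a)\bigr)$, so $e^{2t}=1/\bigl(2(1+a)\bigr)$, the $\beta_2$-rectangle gives $\el(\beta_2,\g_t\Psi_a)\le2(1+a)e^{2t}=1$, whence $\el(\beta_3,\g_t\Psi_a)\ge4/\el(\beta_2,\g_t\Psi_a)\ge4>1$. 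Here $i(\beta_2,\beta_3)=2$ because $\beta_2$ and $\beta_3$ are a horizontal and a vertical leaf of $\Psi_a$, hence in minimal position, and such leaves are seen on the model to cross exactly twice.

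The main obstacle is bookkeeping rather than analysis: one must match each of $\alpha_1,\alpha_2,\beta_2,\beta_3$ to the correct band of the folded $L$-shape, confirm that the two unfolded bands are honestly embedded Euclidean rectangles across the saddle connection from $x_0$ (so the extremal-length estimates above are legitimate), verify $i(\beta_2,\beta_3)=2$, and check that the hypothesis $a\ge2$ is comfortably enough for the four numerical comparisons — each of which in fact already holds for $a$ somewhat smaller than $2$. Beyond that, the argument is precisely the intermediate-value scheme already carried out twice in the paper.
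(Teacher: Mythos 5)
Your proposal is correct and follows essentially the same route as the paper: characterize $\gamma_1$ and $\gamma_3$ via equality of extremal lengths (Lemma \ref{lem:equalissym2}), bound the four extremal lengths at the endpoints of the time interval using the explicit bands of $\Psi_a$, and conclude by the intermediate value theorem. The only difference is cosmetic: where you derive the lower bounds on $\el(\alpha_2)$ and $\el(\beta_2)$ from the upper bounds via Minsky's inequality $\el(F,X)\,\el(G,X)\ge i(F,G)^2$, the paper gets them directly by the length--area method with the Euclidean metric on $\g_t\Psi_a$; both yield estimates that comfortably cover $a\ge 2$.
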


The idea is again to estimate various extremal lengths.

\begin{lem}
If $a \geq 1$, then $\el(\beta_3,\Psi_a) \leq \el(\beta_2,\Psi_a)$.
\end{lem}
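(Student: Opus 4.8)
The plan is to bound $\el(\beta_3,\Psi_a)$ from above by an embedded rectangle and $\el(\beta_2,\Psi_a)$ from below by the Euclidean metric of the $L$-shape, exactly in the spirit of the extremal-length lemmata that precede Proposition~\ref{prop:intersection2}. So the two main steps produce an upper bound of the form $\el(\beta_3,\Psi_a)\le U(a)$ and a lower bound $\el(\beta_2,\Psi_a)\ge L(a)$, and the lemma reduces to the elementary inequality $U(a)\le L(a)$ for $a\ge 1$.

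\textbf{Step 1: the upper bound.} First I would read off the trajectory structure of $\Psi_a$ from its folded-$L$-shape model. The vertical trajectories split into two families: those homotopic to $\alpha_2$, which cross the fold and pass up into the tall rectangle, and those homotopic to $\beta_3$. The latter are the vertical trajectories confined to the wide rectangle $[1,2(1+a)]\times[0,1]$ that run from its top edge down to the folded bottom edge and back up to the top; they sweep out a subregion which, after cutting along the fold and developing, is a Euclidean rectangle with sides $a$ and $2$ in which the $\beta_3$-arcs join the two sides of length $a$. By the second definition of extremal length,
$$
\el(\beta_3,\Psi_a)\ \le\ \el(\text{this rectangle})\ =\ \frac{2}{a}.
$$

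\textbf{Step 2: the lower bound.} Next I would plug into the first definition the Euclidean metric $\rho$ of the $L$-shape, which has $\area(\rho)=(1+a)+(2a+1)=3a+2$. The crux is to show that every arc homotopic to $\beta_2$ has $\rho$-length at least $2(1+a)$. Such an arc has its two endpoints on the boundary side of $\Psi_a$ that carries the endpoints of $\beta_2$ — the left edge of the tall rectangle together with the far edge of the foot — and every point of that side lies at distance exactly $1+a$ from the interior marked point $x_0$ (the fold vertex); since the arc must wind once around $x_0$ before returning to that side, it travels at least twice that distance. Hence $\el(\beta_2,\Psi_a)\ge (2(1+a))^2/(3a+2)=4(1+a)^2/(3a+2)$. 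Combining with Step 1, the lemma follows from $4(1+a)^2/(3a+2)\ge 2/a$, i.e.\ $(a+2)(2a^2-1)\ge 0$, which holds for all $a\ge 1$ (indeed for $a\ge 1/\sqrt2$).

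\textbf{Main obstacle.} The delicate point is the length estimate in Step 2: one must rule out that a representative of $\beta_2$ shortcuts around $x_0$ by exploiting the fold. The subtlety is that a short loop hugging the folded vertex is inessential (it bounds a disk with no marked points), so a genuine $\beta_2$-curve really is forced to reach all the way in to a neighbourhood of $x_0$ and back out; making this rigorous — splitting an arbitrary representative at its closest point to $x_0$ and estimating the two halves against the distance $d(x_0,\partial\Psi_a\text{-side})=1+a$, while using that the winding part contributes nonnegatively because $x_0$ has cone angle $\pi$ — is the part that requires genuine care rather than routine computation.
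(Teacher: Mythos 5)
Your skeleton is exactly the paper's: the embedded $a\times 2$ rectangle gives $\el(\beta_3,\Psi_a)\le 2/a$, the Euclidean metric of area $2+3a$ together with the length bound $2(1+a)$ for representatives of $\beta_2$ gives $\el(\beta_2,\Psi_a)\ge (2(1+a))^2/(2+3a)$, and the lemma follows by comparing the two (the paper weakens the lower bound to $\tfrac43(1+a)$ and checks $2/a\le 2<\tfrac83\le\tfrac43(1+a)$ for $a\ge 1$, while you compare the bounds directly; both are fine).

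The one point where you go beyond the paper is in trying to justify the assertion that every representative of $\beta_2$ has Euclidean length at least $2(1+a)$ (the paper states this without proof), and your justification has a genuine gap. First, it is not true that every point of the relevant boundary side is at distance exactly $1+a$ from $x_0$: the point $(0,y)$ on the left edge is at distance at least $\sqrt{(1+a)^2+y^2}>1+a$ for $y>0$; only the glued corner realizes $1+a$. More seriously, the ``reach $x_0$ and come back'' scheme cannot be rescued by the winding correction you sketch. Splitting $\gamma$ at its closest point to $x_0$, at distance $r>0$, the two radial estimates give only $2(1+a)-2r$, and adding a putative winding contribution of $\pi r$ (circumference at radius $r$ around a cone point of angle $\pi$) would yield $2(1+a)+(\pi-2)r$. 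But the horizontal trajectory at height $\tfrac12$ is a representative of $\beta_2$ of length exactly $2(1+a)$ with closest approach $r=\tfrac12$, so this bound would exceed the true length: the decomposition into radial and angular parts double-counts arclength and the argument is unsound (besides, the cone coordinates about $x_0$ are only valid up to the distance to the nearest singularity or boundary point, which is $1$, not $1+a$). The correct and standard justification is foliation-theoretic: for any rectifiable $\gamma$ in the class of $\beta_2$ one has $\int_\gamma |dz|\ge\int_\gamma |dx|\ge i(\beta_2,\,a\beta_3+\alpha_2)$, the intersection number with the vertical foliation of $\Psi_a$ (whose transverse measure is $|dx|$); since the closed horizontal leaves of a Jenkins--Strebel differential realize this intersection number, and the leaves in the $\beta_2$ band sweep the full width $2(1+a)$ of the bottom strip, the minimal length is exactly $2(1+a)$. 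With that substitution your proof is complete and coincides with the paper's.
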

\begin{proof}
There is an $a \times 2$ rectangle embedded in $\Psi_a$ whose vertical segments are homotopic to $\beta_3$. By the second definition of extremal length we have $$\el(\beta_3,\Psi_a) \leq \frac2a.$$ The Euclidean metric on $\Psi_a$ has area $2+3a < 3(1+a)$ while any representative of $\beta_2$ has length at least $2(1+a)$. By the first definition of extremal length we have
$$
\el(\beta_2,\Psi_a) \geq \frac{(2(1+a))^2}{2+3a} > \frac43 (1+a).
$$ 
Moreover, if $a \geq 1$, then \begin{equation*}\frac2a \leq 2 < \frac83 \leq \frac43 (1+a).\qedhere\end{equation*}
\end{proof}

\begin{lem}
If $a >0$ and $t \leq - \frac12\log(2(1+a))$, then $\el(\beta_3,\g_t\Psi_a) \geq \el(\beta_2,\g_t\Psi_a)$.
\end{lem}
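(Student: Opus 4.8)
The plan is to play the extremal lengths of $\beta_2$ and $\beta_3$ off each other via Minsky's inequality, exactly as in the earlier estimate for $\el(\alpha_2,\Phi_a)$; all the content is concentrated in a good upper bound for $\el(\beta_2,\g_t\Psi_a)$.

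Set $K=e^{-2t}$, so that the hypothesis $t\le -\frac{1}{2}\log(2(1+a))$ becomes $K\ge 2(1+a)$. Since rescaling a flat structure by a positive constant is conformal, $\g_t\Psi_a$ is conformally the $L$-shape $[0,1]\times[0,K(1+a)]\cup[1,2(1+a)]\times[0,K]$ with the bottom side $[0,2(1+a)]\times\{0\}$ folded in two. First I would check that, for every $\eps>0$, the genuine rectangle $[0,2(1+a)]\times[\eps,K]$ embeds conformally in this surface: it avoids the fold because $\eps>0$, its two vertical sides lie on the boundary, and its horizontal segments are precisely the long horizontal leaves, hence homotopic to $\beta_2$ (not to $\alpha_1$, which corresponds to the short horizontal leaves of height $a$ in the thin part). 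The second definition of extremal length then gives $\el(\beta_2,\g_t\Psi_a)\le 2(1+a)/(K-\eps)$, so letting $\eps\to 0$ and using $K\ge 2(1+a)$ yields $\el(\beta_2,\g_t\Psi_a)\le 1$.

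Second, I would note that $\beta_2$ and $\beta_3$ are distinct essential arcs that cannot be realized disjointly — they are vertices of the hexagon $\pmf(\triangletimes)$ not joined by an edge, since the $\alpha_j$'s and the $\beta_j$'s alternate around it — so $i(\beta_2,\beta_3)\ge 1$. Minsky's inequality $\el(F,X)\el(G,X)\ge i(F,G)^2$ then gives
$$
\el(\beta_3,\g_t\Psi_a)\ \ge\ \frac{1}{\el(\beta_2,\g_t\Psi_a)}\ \ge\ 1\ \ge\ \el(\beta_2,\g_t\Psi_a),
$$
the middle step using the bound from the previous paragraph. This is exactly the assertion.

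The one place demanding care is the combinatorics of the folded $L$-shape: one must be sure that the rectangle bounding $\el(\beta_2)$ is homotopic to $\beta_2$ rather than to $\alpha_1$ or $\beta_1$, and that $\beta_2$ and $\beta_3$ are indeed the non-adjacent vertices of the hexagon. I expect this to be the main (though modest) obstacle; everything else is routine. If one prefers to avoid Minsky's inequality and stay in the style of the surrounding lemmas, one can instead bound $\el(\beta_3,\g_t\Psi_a)$ from below using the Euclidean metric on the wide sub-rectangle $[1,2(1+a)]\times[0,K]$ together with the fact that every arc homotopic to $\beta_3$ must cross the folded bottom edge (which joins the interior marked point, on the side of $\beta_3$ where $x_0$ lies, to a regular boundary point on the other side), forcing such an arc to have length at least $K$ inside that rectangle; this yields $\el(\beta_3,\g_t\Psi_a)\ge K/(2a+1)$, which is at least $2(1+a)/K\ge\el(\beta_2,\g_t\Psi_a)$ whenever $K\ge 2(1+a)$.
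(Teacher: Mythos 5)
Your main argument is correct, and it reaches the lower bound for $\beta_3$ by a genuinely different route than the paper. The upper bound $\el(\beta_2,\g_t\Psi_a)\le 2(1+a)e^{2t}\le 1$ is the same in both proofs: a single long, thin embedded rectangle swept out by the long horizontal leaves (your $\eps$-truncation to dodge the fold is a clean way to justify the embedding, and the leaves at heights in $(0,K)$ are $\beta_2$ by the very definition of $\Psi_a$). For $\beta_3$ the paper argues directly by length--area on the \emph{whole} surface: every representative of $\beta_3$ has both endpoints at height at least $e^{-t}$ and must descend to the folded edge at height $0$, hence has Euclidean length at least $2e^{-t}$, which against the total area $2+3a$ gives $\el(\beta_3,\g_t\Psi_a)\ge 4e^{-2t}/(2+3a)\ge 8/3$. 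You instead deduce the lower bound from the upper bound via Minsky's inequality and $i(\beta_2,\beta_3)\ge 1$, which is legitimate: the $\alpha_j$ and $\beta_j$ do alternate around the hexagon $\pmf(\triangletimes)$, so $\beta_2$ and $\beta_3$ cannot be realized disjointly, and the paper itself invokes Minsky's inequality for arcs on $\triangletimes$ two subsections earlier. Your version requires no analysis of where $\beta_3$ actually travels in the folded $L$-shape, at the price of a slightly weaker constant ($\el(\beta_3)\ge 1$ rather than $\ge 8/3$), which is still ample for the conclusion.

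The optional Minsky-free variant in your last sentence, however, contains a genuine error. It is not true that every arc homotopic to $\beta_3$ has length at least $K$ \emph{inside the wide sub-rectangle} $[1,2(1+a)]\times[0,K]$. The top of the wide leg and the right-hand side $\{1\}\times[K,K(1+a)]$ of the tall leg lie in the same boundary interval (the reflex corner is unmarked), so a representative of $\beta_3$ may start and end on that vertical side, descend to the folded edge entirely through the column $[0,1]\times[0,K]$ of the tall leg, cross the fold at a point close to the interior marked point, and make only an arbitrarily short excursion into the wide rectangle when $a$ is small; such an arc still cuts off a once-punctured unmarked disk with boundary trace in the correct interval, hence is homotopic to $\beta_3$. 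The vertical travel of $2K$ is unavoidable, but only in the surface as a whole, so the length--area estimate must be run against the full area $K(2+3a)$ (giving $4K/(2+3a)$, the paper's bound), not against the area of the wide sub-rectangle.
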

\begin{proof}
Let $K = e^{2t}$. The Euclidean metric on $\g_t\Psi_a$ has area $2+3a$ while any representative of $\beta_3$ has length at least $2/\sqrt{K}$. This yields
$$
\el(\beta_3,\g_t\Psi_a) \geq \frac{4}{K(2+3a)} \geq \frac{4e^{-2t}}{3(1+a)}\geq \frac 83 .
$$

On the other hand, there is a $2(1+a)\sqrt{K}$  by $1/\sqrt{K}$ rectangle homotopic to $\beta_2$ in $\g_t\Psi_a$ so that
\begin{equation*}
\el(\beta_2,\g_t\Psi_a) \leq 2(1+a)K = 2(1+a)e^{2t} \leq 1 < \frac83 \leq \el(\beta_3,\g_t\Psi_a). \qedhere
\end{equation*}
\end{proof}

\begin{cor}
If $a \geq 1$, then $\g_t\Psi_a \in \gamma_1$ for some $t \in \left[- \frac12\log(2(1+a)), 0\right]$.
\end{cor}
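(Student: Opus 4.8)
The plan is to combine the two preceding lemmas with the intermediate value theorem and the characterization of $\gamma_1$ furnished by Lemma \ref{lem:equalissym2}. Recall that $\sigma_1 = (23)$ swaps $\beta_2$ and $\beta_3$, so by the ``indices permuted'' version of Lemma \ref{lem:equalissym2}, a point $X \in \teich(\triangletimes)$ lies on $\gamma_1$ if and only if $\el(\beta_3,X) = \el(\beta_2,X)$. Thus it suffices to find a $t$ in the stated interval at which this equality holds along the geodesic $\nu_a$.

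First I would note that, since $a \geq 1$, we have $2(1+a) \geq 4 > 1$, so $-\tfrac12\log(2(1+a)) < 0$ and the interval $\left[-\tfrac12\log(2(1+a)),\,0\right]$ is nondegenerate. Consider the function $t \mapsto \el(\beta_3,\g_t\Psi_a) - \el(\beta_2,\g_t\Psi_a)$ on this interval. It is continuous: extremal length is a continuous function on $\teich(\triangletimes)$ (for instance via its expression $\el(F,X) = \area(q_F)$ and the Hubbard--Masur theorem, or via Kerckhoff's formula), and $t \mapsto \g_t\Psi_a$ is a continuous path. By the first lemma above, this function is $\leq 0$ at $t = 0$; by the second lemma (applied at $t = -\tfrac12\log(2(1+a))$, which satisfies the hypothesis $t \leq -\tfrac12\log(2(1+a))$ with equality), it is $\geq 0$ at the left endpoint. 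By the intermediate value theorem there exists $t \in \left[-\tfrac12\log(2(1+a)),\,0\right]$ with $\el(\beta_3,\g_t\Psi_a) = \el(\beta_2,\g_t\Psi_a)$, and Lemma \ref{lem:equalissym2} then gives $\g_t\Psi_a \in \gamma_1$.

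There is essentially no obstacle here beyond bookkeeping: both inequalities bracketing the interval have already been proved, and the equivalence used to identify $\gamma_1$ is Lemma \ref{lem:equalissym2}. The only point that deserves a word is the continuity of $t \mapsto \el(\beta_i,\g_t\Psi_a)$, which one may simply invoke from the general theory; everything else is a direct application of the intermediate value theorem.
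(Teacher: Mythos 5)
Your proof is correct and is essentially identical to the paper's: the paper likewise combines the two preceding lemmata with the intermediate value theorem and then invokes Lemma \ref{lem:equalissym2} (in its permuted-indices form) to conclude that equality of the extremal lengths of $\beta_2$ and $\beta_3$ places the point on $\gamma_1$. The extra remarks on continuity and nondegeneracy of the interval are fine but not needed beyond what the paper takes for granted.
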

\begin{proof}
It follows from the previous two lemmata and the intermediate value theorem that $\el(\beta_2,\g_t\Psi_a) = \el(\beta_3,\g_t\Psi_a)$ for some $t \in \left[- \frac12\log(2(1+a)), 0\right]$. This equality implies that $\g_t\Psi_a \in \gamma_1$ by Lemma \ref{lem:equalissym2}.
\end{proof}

\begin{lem}
If $a \geq 2$, then $\el(\alpha_1,\Psi_a) \leq \el(\alpha_2,\Psi_a)$.
\end{lem}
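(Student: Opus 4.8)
The plan is to sandwich the two extremal lengths: I will bound $\el(\alpha_1,\Psi_a)$ from above and $\el(\alpha_2,\Psi_a)$ from below, by quantities that are comparable as soon as $a\geq 1$ (hence in particular for $a\geq 2$). The upper bound comes from a single conformally embedded Euclidean rectangle, and the lower bound from Minsky's inequality $\el(F,X)\,\el(G,X)\geq i(F,G)^2$ applied with $F=\alpha_1$ and $G=\alpha_2$.

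For the upper bound, recall from Figure \ref{fig:psia} that $\Psi_a$ is the $L$-shape $[0,1]\times[0,1+a]\cup[1,2(1+a)]\times[0,1]$ with its bottom side folded in two, and that its horizontal foliation is $a\alpha_1+\beta_2$. The cylinder of horizontal leaves contributing the summand $a\alpha_1$ is exactly the sub-rectangle $[0,1]\times[1,1+a]$: it lies in the part of the $L$-shape untouched by the fold, so it is conformally (indeed isometrically) embedded in $\Psi_a$, its core arcs are homotopic to $\alpha_1$, and it has width $1$ in the $\alpha_1$-direction and height $a$. By the second definition of extremal length,
$$\el(\alpha_1,\Psi_a)\ \leq\ \el\bigl([0,1]\times[1,1+a]\bigr)\ =\ \frac1a .$$

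For the lower bound, note that $\alpha_1$ and $\alpha_2$ cannot be realized disjointly: $\alpha_2$ lies in neither component of $\triangletimes\setminus\alpha_1$ (one component carries $x_2,x_3$ on its boundary and the other carries $x_0,x_1$, while $\alpha_2$ separates $\{x_1,x_3\}$ from $\{x_0,x_2\}$), so $i(\alpha_1,\alpha_2)\geq 1$; this intersection is also visible in the hexagon $\pmf(\triangletimes)$ of Figure \ref{fig:pmftri}. Minsky's inequality then gives
$$\el(\alpha_1,\Psi_a)\,\el(\alpha_2,\Psi_a)\ \geq\ i(\alpha_1,\alpha_2)^2\ \geq\ 1,$$
so $\el(\alpha_2,\Psi_a)\geq 1/\el(\alpha_1,\Psi_a)\geq a$. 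Since $a\geq 2\geq 1$ we have $1/a\leq a$, and therefore
$$\el(\alpha_1,\Psi_a)\ \leq\ \frac1a\ \leq\ a\ \leq\ \el(\alpha_2,\Psi_a),$$
which is the desired inequality.

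The only point requiring care is the combinatorial bookkeeping in the $L$-shape model: one must be sure that the sub-rectangle above genuinely carries the $\alpha_1$-trajectories (so that it has the stated width $1$ and height $a$, and not some weaker dimensions), and confirm $i(\alpha_1,\alpha_2)\geq 1$. Granting this, everything follows formally from the three definitions of extremal length. If one wishes to avoid invoking Minsky's inequality, the lower bound can be obtained directly: every representative of $\alpha_2$ meets every horizontal leaf of the rectangle $[0,1]\times[1,1+a]$, so its length in the Euclidean metric supported on that rectangle is at least $a$; since that metric has area $a$, one gets $\el(\alpha_2,\Psi_a)\geq a^2/a=a$ as before.
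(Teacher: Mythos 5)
Your proof is correct, and it differs from the paper's in the lower bound on $\el(\alpha_2,\Psi_a)$. The upper bound $\el(\alpha_1,\Psi_a)\leq 1/a$ via the embedded $1\times a$ rectangle $[0,1]\times[1,1+a]$ is exactly the paper's first step. For the lower bound, the paper works with the Euclidean metric on all of $\Psi_a$: every representative of $\alpha_2$ has length at least $2+a$ while the area is $2+3a$, giving $\el(\alpha_2,\Psi_a)\geq (2+a)^2/(2+3a)\geq (1+a)/3\geq 1 > 1/a$ for $a\geq 2$. You instead invoke Minsky's inequality with $i(\alpha_1,\alpha_2)\geq 1$ (which is indeed justified by your separation argument, and is consistent with the bicolored hexagon structure of $\pmf(\triangletimes)$, where the two $\alpha$-vertices adjacent to a given vertex are never both of type $\alpha$), obtaining $\el(\alpha_2,\Psi_a)\geq 1/\el(\alpha_1,\Psi_a)\geq a$. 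This is a cleaner bound with a better constant, it only requires $a\geq 1$, and your closing remark correctly observes that it unwinds to a length--area estimate using the metric supported on the single rectangle $[0,1]\times[1,1+a]$ rather than on all of $\Psi_a$ --- essentially the proof of Minsky's inequality specialized to this configuration. The paper does use Minsky's inequality elsewhere (in the proof that $\el(\alpha_2,\Phi_a)\geq \frac{2}{\pi}\log(1+\frac{1-a}{2a})$), so your tool is already in its toolbox; the trade-off is that the paper's version avoids any appeal to intersection numbers at the cost of a slightly cruder estimate. Both arguments are complete.
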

\begin{proof}
There is a $1\times a$ rectangle homotopic to $\alpha_1$ so that $\el(\alpha_1,\Psi_a) \leq 1/a < 1$. Also, the Euclidean metric on $\Psi_a$ is such that every arc homotopic to $\alpha_2$ has length at least $2+a$. Hence we have
\begin{equation*}
\el(\alpha_2,\Psi_a) \geq \frac{(2+a)^2}{2+3a} \geq \frac{1+a}{3} \geq 1 > \el(\alpha_1,\Psi_a). \qedhere
\end{equation*}
\end{proof}

\begin{lem}
If $a\geq 2$ and $t\geq\frac12 \log(2(1+a))$, then $\el(\alpha_1,\g_t\Psi_a) \geq \el(\alpha_2,\g_t\Psi_a)$.
\end{lem}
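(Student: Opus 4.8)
The plan is to mimic the two preceding lemmas: bound $\el(\alpha_1,\g_t\Psi_a)$ from below by an embedded Euclidean rectangle that every representative of $\alpha_1$ must cross, bound $\el(\alpha_2,\g_t\Psi_a)$ from above by an embedded Euclidean rectangle homotopic to $\alpha_2$, and then check that the first estimate beats the second once $t\geq\frac12\log(2(1+a))$.

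First I would record the rectangular structure of $\Psi_a$ explicitly. Before folding, $\Psi_a$ is the $L$-shape $[0,1]\times[0,1+a]\cup[1,2(1+a)]\times[0,1]$. Its horizontal foliation $a\alpha_1+\beta_2$ splits $\Psi_a$ into the horizontal rectangle $[0,1]\times[1,1+a]$ (height $a$, core arc $\alpha_1$, horizontal leaves of length $1$) and the horizontal rectangle $[0,2(1+a)]\times[0,1]$ (height $1$, core arc $\beta_2$). Its vertical foliation $a\beta_3+\alpha_2$ splits $\Psi_a$, after the bottom side $[0,2(1+a)]\times\{0\}$ is folded at $(1+a,0)$, into a vertical rectangle of width $a$ and height $2$ homotopic to $\beta_3$ (the shelf glued to itself across the crease) and a vertical rectangle of width $1$ and height $2+a$ homotopic to $\alpha_2$ (the column $[0,1]\times[0,1+a]$ glued across the crease to the strip $[2a+1,2(1+a)]\times[0,1]$). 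Applying $\g_t$ stretches every one of these rectangles horizontally by $e^t$ and contracts it vertically by $e^{-t}$.

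The lower bound then runs exactly as in Lemma~\ref{lem:loweralpha1}: the rectangle $\g_t\bigl([0,1]\times[1,1+a]\bigr)$ has width $e^t$ and height $e^{-t}a$, is embedded in $\g_t\Psi_a$, and every arc homotopic to $\alpha_1$ crosses it horizontally, so $\el(\alpha_1,\g_t\Psi_a)\geq e^t/(e^{-t}a)=e^{2t}/a$. (Alternatively the Euclidean metric, of area $2+3a$, together with the fact that any representative of $\alpha_1$ has length at least $e^t$, gives $\el(\alpha_1,\g_t\Psi_a)\geq e^{2t}/(2+3a)$; either suffices.) For the upper bound, the $\alpha_2$-rectangle above has width $e^t$ and height $e^{-t}(2+a)$ in $\g_t\Psi_a$, is embedded, and its vertical core arcs are homotopic to $\alpha_2$, so $\el(\alpha_2,\g_t\Psi_a)\leq e^{-t}(2+a)/e^t=e^{-2t}(2+a)$. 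Finally, when $t\geq\frac12\log(2(1+a))$ we have $e^{4t}\geq 4(1+a)^2\geq a^2+2a=a(2+a)$, which rearranges to $e^{2t}/a\geq e^{-2t}(2+a)$; chaining the three displays gives $\el(\alpha_1,\g_t\Psi_a)\geq\el(\alpha_2,\g_t\Psi_a)$.

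The arithmetic is immediate and, as in the companion lemmas, does not really need $a\geq 2$; the point requiring attention is the bookkeeping forced by the fold. One must verify that the vertical rectangle assembled from the column and the far-right strip truly carries the class $\alpha_2$ (and not $\beta_3$), that it is genuinely embedded in $\g_t\Psi_a$ rather than merely immersed along the crease, and that the claim ``every representative of $\alpha_1$ crosses the width-$e^t$ rectangle'' survives the identification of the bottom edge (in particular that an endpoint of such a representative cannot escape by sliding along $\partial\Psi_a$ past the non-marked zero of the differential onto the shelf). All three are transparent from the cut-and-paste pictures in Figures~\ref{fig:surgery} and~\ref{fig:psia}, but this is where the care lies.
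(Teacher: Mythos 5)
Your upper bound for $\el(\alpha_2,\g_t\Psi_a)$ is exactly the paper's and is fine: the width-one vertical band through the column, folded onto the far-right strip $[2a+1,2(1+a)]\times[0,1]$, is a genuinely embedded $e^t\times(2+a)e^{-t}$ rectangle in the class $\alpha_2$, so $\el(\alpha_2,\g_t\Psi_a)\leq(2+a)e^{-2t}$. The genuine gap is in your primary lower bound. It is \emph{not} true that every representative of $\alpha_1$ crosses the column rectangle $\g_t([0,1]\times[1,1+a])$: the left vertical side of that rectangle lies in the interior of the boundary edge $\{0\}\times[0,1+a]$ (no marked point at $(0,1)$), and its right vertical side lies in the interior of the boundary edge that continues past the unmarked reflex corner $(1,1)$ along the top of the shelf. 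So a representative of $\alpha_1$ can have endpoints at, say, $(0,\tfrac12)$ and $(3,1)$ and run entirely inside the bottom strip $[0,2(1+a)]\times[0,1]$ (the $\beta_2$-band), never entering the column above height $1$; one checks that it still separates the two marked points on the top edge from $x_0=(1+a,0)$ and the marked point $(2(1+a),1)$, hence lies in the class $\alpha_1$. This is precisely the escape you flag in your final paragraph and then dismiss as transparent from the pictures --- it actually occurs. Consequently the metric supported on the column gives $\ell_{\rho}[\alpha_1]=0$ and no lower bound at all, and the asserted inequality is false, not merely unproven: at $t=0$ it would say $\el(\alpha_1,\Psi_a)\geq 1/a$, whereas the column is an embedded rectangle of extremal length $1/a$ homotopic to $\alpha_1$ that is not the Jenkins--Strebel rectangle (it misses the entire shelf), so $\el(\alpha_1,\Psi_a)<1/a$ strictly.

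Your parenthetical fallback is the paper's actual argument and it does repair the proof: every representative of $\alpha_1$ must cross every leaf of the width-one $\alpha_2$-band of the vertical foliation (equivalently, $i(\alpha_1,a\beta_3+\alpha_2)=1$), so its horizontal variation is at least $1$ and its Euclidean length in $\g_t\Psi_a$ is at least $e^t$, giving $\el(\alpha_1,\g_t\Psi_a)\geq e^{2t}/(2+3a)$. But you must then redo the closing arithmetic with this weaker bound: what is needed is $e^{4t}\geq(2+a)(2+3a)$, which holds because $e^{4t}\geq 4(1+a)^2=(2+a)(2+3a)+a^2$. This also confirms your side remark that $a\geq 2$ is not needed; the paper invokes $a\geq2$ only to compare both extremal lengths to the constant $\tfrac23$ rather than to each other directly.
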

\begin{proof}
Let $K = e^{2t}$. In the Euclidean metric on $\g_t \Psi_a$, every arc homotopic to $\alpha_1$ has length at least $\sqrt{K}$ so that
$$
\el(\alpha_1,\g_t\Psi_a) \geq \frac{K}{2+3a} \geq \frac{2(1+a)}{2+3a} \geq \frac23.
$$
Moreover, there is a $\sqrt{K}$ by $(2+a)/\sqrt{K}$ rectangle homotopic to $\alpha_2$ in $\g_t \Psi_a$, which implies
\begin{equation*}
\el(\alpha_2,\g_t\Psi_a) \leq \frac{(2+a)}{K} =  \frac{(2+a)}{e^{2t}} \leq \frac{2+a}{2(1+a)}  \leq \frac23 \leq \el(\alpha_1,\g_t\Psi_a). \qedhere
\end{equation*}
\end{proof}

\begin{cor}
If $a \geq 2$, then $\g_t\Psi_a \in \gamma_3$ for some $t \in \left[0 , \frac12\log(2(1+a))\right]$.
\end{cor}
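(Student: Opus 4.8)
The plan is to mimic, essentially verbatim, the structure of the proof of the corollary establishing that $\g_t\Psi_a \in \gamma_1$. First I would fix $a \geq 2$ and introduce the auxiliary function
$$
\delta(t) = \el(\alpha_1, \g_t\Psi_a) - \el(\alpha_2, \g_t\Psi_a),
$$
which is continuous in $t$ since extremal length varies continuously along a Teichm\"uller geodesic (equivalently, $t \mapsto \g_t\Psi_a$ is continuous into $\teich(\triangletimes)$ and extremal length is continuous on $\teich(\triangletimes)$). The two immediately preceding lemmata give the sign of $\delta$ at the two endpoints of the interval: the lemma with hypothesis $a \geq 2$ shows $\el(\alpha_1,\Psi_a) \leq \el(\alpha_2,\Psi_a)$, i.e. $\delta(0) \leq 0$, while the lemma with hypotheses $a \geq 2$ and $t \geq \frac12\log(2(1+a))$ shows $\el(\alpha_1,\g_t\Psi_a) \geq \el(\alpha_2,\g_t\Psi_a)$, i.e. $\delta\!\left(\tfrac12\log(2(1+a))\right) \geq 0$.

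By the intermediate value theorem there exists $t \in \left[0, \tfrac12\log(2(1+a))\right]$ with $\delta(t) = 0$, that is, $\el(\alpha_1, \g_t\Psi_a) = \el(\alpha_2, \g_t\Psi_a)$. Finally I would invoke Lemma \ref{lem:equalissym2} (in the form obtained by permuting the indices $1$, $2$, $3$, as noted right after its proof): equality of the extremal lengths of $\alpha_1$ and $\alpha_2$ at a point $X \in \teich(\triangletimes)$ forces $X \in \gamma_3$. Applying this with $X = \g_t\Psi_a$ yields $\g_t\Psi_a \in \gamma_3$, which is exactly the assertion.

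There is no real obstacle here; the content is entirely in the two preceding lemmata and in Lemma \ref{lem:equalissym2}, all of which we are free to assume. The only point requiring a word of care is the continuity of $\delta$, needed to legitimately apply the intermediate value theorem — but this is standard, as extremal length of a fixed multiarc is a continuous (indeed real-analytic) function on Teichm\"uller space, and $\g_t$ acts continuously. So the proof is a three-line deduction paralleling the $\gamma_1$ case.
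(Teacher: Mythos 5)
Your proof is correct and follows exactly the paper's own argument: the two preceding lemmata give opposite signs for $\el(\alpha_1,\g_t\Psi_a)-\el(\alpha_2,\g_t\Psi_a)$ at the endpoints of the interval, the intermediate value theorem produces a $t$ where the extremal lengths agree, and Lemma \ref{lem:equalissym2} (with permuted indices) then places $\g_t\Psi_a$ on $\gamma_3$. The added remark about continuity of extremal length is a fine (if standard) point of care that the paper leaves implicit.
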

\begin{proof}
The last two lemmata and the intermediate value theorem imply that $$\el(\alpha_1,\g_t\Psi_a) = \el(\alpha_2,\g_t\Psi_a)$$ for some $t \in \left[0 , \frac12\log(2(1+a))\right]$. In turn, equality of extremal lengths implies that $\g_t\Psi_a$ belongs to $\gamma_3$ by Lemma \ref{lem:equalissym2}.
\end{proof}

This finishes the proof of Proposition \ref{prop:intersection3}. Let $I_a$ be the segment of $\nu_a$ between $\gamma_1$ and $\gamma_3$, and let $J_a$ be the geodesic hexagon obtained by successively reflecting $I_a$ across the axes of symmetry of $\teich(\triangletimes)$:
$$
J_a = \sigma_2\sigma_1(I_a) \cup \sigma_1(I_a) \cup I_a \cup \sigma_3(I_a) \cup \sigma_2\sigma_3(I_a) \cup \sigma_1\sigma_2\sigma_3(I_a).
$$ 
Then $J_a$ is a closed curve of length at most $6\log(2(1+a))$ since $I_a$ has length at most $\log(2(1+a))$.

\subsection{Inner and outer radii}

We now estimate the inner and outer radii of the hexagon $J_a$.

\begin{lem}
There exists a constant $C_1>0$ such that for every $a \geq 2$, the hexagon $J_a$ is disjoint from the ball of radius $\frac12 \log a - C_1$ centered at the origin.
\end{lem}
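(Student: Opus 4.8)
The plan is to reduce the statement to a lower bound for $d(\origin,\g_t\Psi_a)$ and then extract that bound from Kerckhoff's formula applied to two well-chosen arcs. First observe that $J_a$ is a union of six copies of the segment $I_a=\{\g_t\Psi_a: t_1\le t\le t_3\}$, where $\g_{t_1}\Psi_a\in\gamma_1$ and $\g_{t_3}\Psi_a\in\gamma_3$, and that each of these copies has the form $g(I_a)$ for some $g$ in the dihedral group $D_3$, which acts on $\teich(\triangletimes)$ by isometries fixing the origin $\origin$. Hence $d(\origin,J_a)=d(\origin,I_a)$, and it is enough to show that $d(\origin,\g_t\Psi_a)\ge\tfrac12\log a-C_1$ for every $t\in\RR$; in fact we will not need to restrict $t$ to $[t_1,t_3]$.

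For the lower bound I would use Kerckhoff's formula $(\ref{eqn:kerck})$ in the form
$$d(\origin,X)=d(X,\origin)\ge\frac12\log\frac{\el(F,\origin)}{\el(F,X)},$$
which is effective precisely when $\el(F,X)$ is small. By the same scaling argument as for $\Phi_a$, the surface $\g_t\Psi_a$ is conformally equivalent to $\h_t\Psi_a$, namely the $L$-shape $[0,K]\times[0,1+a]\cup[K,2(1+a)K]\times[0,1]$ with its bottom side folded in two, where $K=e^{2t}$. Inside this $L$-shape the rectangle $[0,K]\times[1,1+a]$ has horizontal sides homotopic to $\alpha_1$, which gives $\el(\alpha_1,\g_t\Psi_a)\le K/a$; and the fold along the bottom of the short leg produces an embedded rectangle of dimensions $aK\times 2$ whose vertical sides are homotopic to $\beta_3$, which gives $\el(\beta_3,\g_t\Psi_a)\le 2/(aK)$. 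Both inequalities hold for all $t\in\RR$.

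Applying the displayed inequality to $F=\alpha_1$ and to $F=\beta_3$ and averaging the two bounds, the factor $K$ cancels and one obtains
$$d(\origin,\g_t\Psi_a)\ge\frac14\log\frac{\el(\alpha_1,\origin)\,\el(\beta_3,\origin)}{\el(\alpha_1,\g_t\Psi_a)\,\el(\beta_3,\g_t\Psi_a)}\ge\frac14\log\frac{\el(\alpha_1,\origin)\,\el(\beta_3,\origin)\,a^2}{2}=\frac12\log a-C_1,$$
where $C_1=\max\bigl\{0,\ -\tfrac14\log(\el(\alpha_1,\origin)\el(\beta_3,\origin)/2)\bigr\}$ is a genuine constant since $\el(\alpha_1,\origin)$ and $\el(\beta_3,\origin)$ do not depend on $a$. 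This shows that $J_a$ is disjoint from the ball of radius $\tfrac12\log a-C_1$ about $\origin$. The only step that requires real care is the construction of these two embedded rectangles, and especially the $\beta_3$-rectangle: one must verify that the horizontal trajectory through the interior marked point unfolds the short leg of the $L$-shape into a rectangle of doubled height, exactly as in the earlier description of rectangular structures on $\triangletimes$. Once that is in place, the rest is routine bookkeeping with Kerckhoff's formula, and comparing $\alpha_1$ and $\beta_3$ simultaneously (rather than in cases) is what keeps the estimate uniform in $t$.
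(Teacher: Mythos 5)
Your proof is correct, but it takes a genuinely different route from the paper's. The paper obtains the lower bound on $d(\origin,\g_t\Psi_a)$ from a single test arc, namely $\alpha_3$: in the flat metric of $\g_t\Psi_a$ (area $3a+2\le 4a$) every representative of $\alpha_3$ has length at least $(2a+1)\sqrt{K}+a/\sqrt{K}\ge a\bigl(\sqrt{K}+1/\sqrt{K}\bigr)$ with $K=e^{2t}$, so $\el(\alpha_3,\g_t\Psi_a)\ge a$ uniformly in $t$, and Kerckhoff's formula in the direction $d(\origin,\g_t\Psi_a)\ge\frac12\log\bigl(\el(\alpha_3,\g_t\Psi_a)/\el(\alpha_3,\origin)\bigr)$ finishes. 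You instead use two test arcs, $\alpha_1$ and $\beta_3$, bound their extremal lengths from \emph{above} by $K/a$ and $2/(aK)$ via embedded rectangles (both rectangles already appear, at $t=0$, in the lemmata proving Proposition \ref{prop:intersection3}; you just push them through the flow), and average the two Kerckhoff lower bounds so that $K$ cancels. This is the familiar fact that the product of the extremal lengths of the horizontal and vertical data is essentially invariant under the Teichm\"uller flow, and it yields the same leading term $\frac12\log a$. Both arguments are sound; the paper's needs only one arc and a length--area estimate, whereas yours avoids bounding any extremal length from below on $\g_t\Psi_a$, which is often the harder direction to certify. Two cosmetic points: your $C_1$ should be forced to be positive (e.g.\ take the maximum with $1$) to match the statement, and the $\beta_3$-rectangle sits around the fold point of the bottom side, which belongs to the \emph{long} leg of the $L$-shape rather than the short one --- but the unfolding into an $aK\times 2$ rectangle is exactly as you describe.
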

\begin{proof}
Denote the origin of $\teich(\triangletimes)$ by $X_0$. It suffices to show that $$d(X_0, \g_t\Phi_a) \geq \frac12 \log a - C_1$$ whenever $|t|\leq \frac12\log(2(1+a))$. 

Let $K = e^{2t}$. In the Euclidean metric on $\g_t\Psi_a$ (which has area $3a+2 \leq 4a$), every representative of $\alpha_3$ has length at least 
$$(2a+1)\sqrt{K} + \frac{a}{\sqrt{K}} \geq a \left(\sqrt{K} + \frac{1}{\sqrt{K}}\right).$$ Thus
$$
\el(\alpha_3, \g_t\Psi_a) \geq \frac{1}{3a+2}{\left( (2a+1)\sqrt{K} + \frac{a}{\sqrt{K}}\right)^2} 
 \geq  \frac{a}{4}{\left(\sqrt{K} + \frac{1}{\sqrt{K}}\right)^2} \geq a.
$$

By Kerckhoff's formula we have
$$
d(X_0, \g_t\Psi_a) \geq \frac12 \log \frac{\el(\alpha_3, \g_t \Psi_a)}{\el(\alpha_3, X_0)} \geq \frac12 \log a - \frac12\log \el(\alpha_3, X_0).
$$
Since the last term on the right is a constant, the result follows.

\end{proof}

\begin{lem}
There exists a constant $C_2>0$ such that for every $a \geq 2$, the hexagon $J_a$ is contained in the ball of radius $\log a + C_2$ centered at the origin.
\end{lem}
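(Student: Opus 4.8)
The plan is to mimic the proof of Lemma~\ref{lem:outer}, with the caveat that $\Psi_a$ does not lie on any axis $\gamma_j$, so $d(X_0,\Psi_a)$ is not directly given by Kerckhoff's formula; instead I would estimate it by writing down an explicit quasiconformal map to a fixed reference surface. First, since $J_a$ is the union of $I_a=\{\g_t\Psi_a : t_1\le t\le t_3\}$ and its five images under the reflections of $D_3$ --- all isometries of $\teich(\triangletimes)$ fixing $X_0$ --- the outer radius of $J_a$ about $X_0$ equals $\sup_{t_1\le t\le t_3}d(X_0,\g_t\Psi_a)$. By Proposition~\ref{prop:intersection3} every such $t$ satisfies $|t|\le\tfrac12\log(2(1+a))$, and since $d(\Psi_a,\g_t\Psi_a)=|t|$ the triangle inequality yields
\[
d(X_0,\g_t\Psi_a)\le d(X_0,\Psi_a)+\tfrac12\log\!\bigl(2(1+a)\bigr)\le d(X_0,\Psi_a)+\tfrac12\log a+\log 2
\]
for $a\ge 2$. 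So the whole problem reduces to showing $d(X_0,\Psi_a)\le\tfrac12\log a+C$ for some constant $C$.

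For that last estimate I would construct a piecewise-affine, orientation-preserving homeomorphism $h_a\colon\Psi_a\to\Psi_2$ that matches the labelled marked points, so that $d(\Psi_a,\Psi_2)\le\tfrac12\log K(h_a)$. Cut the $L$-shape underlying $\Psi_a$ into the unit square $[0,1]^2$, the top strip $[0,1]\times[1,1+a]$, and the two pieces $[1,1+2a]\times[0,1]$ and $[1+2a,2(1+a)]\times[0,1]$ of the long arm, and cut the $L$-shape underlying $\Psi_2$ into $[0,1]^2$, $[0,1]\times[1,3]$, $[1,5]\times[0,1]$ and $[5,6]\times[0,1]$. Let $h_a$ be the identity on the square, $(x,y)\mapsto(x,\,1+\tfrac2a(y-1))$ on the top strip, $(x,y)\mapsto(1+\tfrac2a(x-1),\,y)$ on the third piece, and the translation $(x,y)\mapsto(x-2a+4,\,y)$ on the fourth. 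One checks that these four affine maps agree along the common edges $x=1$, $x=1+2a$, $y=1$; that the induced self-map of the folded bottom edge $[0,2(1+a)]\to[0,6]$ is symmetric about its midpoint (taking value $3$ there), hence compatible with both folds and carrying the crease point $(1+a,0)$ --- that is, the interior marked point $x_0$ --- to the crease point $(3,0)$ of $\Psi_2$; and that the remaining boundary corners are matched. The linear parts of the pieces are $\operatorname{diag}(1,\tfrac2a)$, $\operatorname{diag}(\tfrac2a,1)$ and the identity, each of dilatation at most $a/2$ when $a\ge 2$, so $h_a$ is $(a/2)$-quasiconformal. Hence $d(X_0,\Psi_a)\le d(X_0,\Psi_2)+\tfrac12\log(a/2)\le\tfrac12\log a+d(X_0,\Psi_2)$.

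Combining the two estimates, $d(X_0,\g_t\Psi_a)\le\log a+\bigl(d(X_0,\Psi_2)+\log 2\bigr)$ for every point of $I_a$, hence for every point of $J_a$, which is the claim with $C_2=d(X_0,\Psi_2)+\log 2$. I expect the only genuinely delicate point to be the verification that the piecewise-affine recipe above really does descend to a homeomorphism of punctured triangles respecting the labels: one has to track how the fold identifies the bottom edge and where the labelled corners and the crease point go, using the explicit picture of $\Psi_a$ in Figure~\ref{fig:psia}. This is a routine but slightly fussy bookkeeping check; all of the metric estimates themselves are elementary.
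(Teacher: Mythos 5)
Your proof is correct and takes essentially the same route as the paper's: reduce to bounding $d(X_0,\g_t\Psi_a)$ for $|t|\le\tfrac12\log(2(1+a))$ via the triangle inequality, and control $d(X_0,\Psi_a)$ by an explicit piecewise-affine quasiconformal map of dilatation comparable to $a$ between $\Psi_a$ and a fixed reference surface, built from the very same decomposition into the square, the top strip, and the two pieces of the long arm. The only cosmetic differences are that the paper uses $\Psi_1$ rather than $\Psi_2$ as the reference point and states the map in the direction $\Psi_1\to\Psi_a$.
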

\begin{proof}
Denote the origin of $\teich(\triangletimes)$ by $X_0$. It suffices to prove that the segment $I_a$ is contained in the ball, i.e., that $d(X_0, \g_t \Psi_a) \leq \log a + C_2$ whenever $|t| \leq \frac12\log(2(1+a))$. 

For every $a\geq 1$, there is a piecewise linear map $f: \Psi_1 \to \Psi_a$ obtained by stretching the top leg of $\Psi_a$ vertically by $a$ and stretching the subrectangle $[1,3]\times [0,1]$ of the right leg horizontally by $a$. The homeomorphism $f$ is $a$-quasiconformal so that $d(\Psi_1, \Psi_a) \leq \frac12 \log a$.

The triangle inequality yields the inequality
$$
d(X_0, \g_t \Psi_a) \leq d(X_0, \Psi_1) + d(\Psi_1, \Psi_a) + d(\Psi_a , \g_t \Psi_a).
$$
The first term on the right-hand side is a constant, the second term is bounded by $\frac12 \log a$ and the last term is equal to $|t|$, which is at most
\begin{equation*}
\frac12\log(2(1+a)) \leq \frac12\log(3a) = \frac12\log a +  \frac12\log 3. \qedhere
\end{equation*}
\end{proof}

\begin{cor} \label{cor:inout2}
There exits a constant $C_3>0$ such that for every $t > C_3$, the hexagon $J_a$ with $a = e^{8t/3}$ is disjoint from the ball of radius $t$ around the origin and is contained in the ball of radius $3t$ around the origin. 
\end{cor}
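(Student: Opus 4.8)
The plan is to deduce this directly from the two preceding lemmata by making the substitution $a = e^{8t/3}$, exactly as in the proof of Corollary~\ref{cor:inout} for the pentagon. With this choice we have $\tfrac12\log a = \tfrac{4t}{3}$ and $\log a = \tfrac{8t}{3}$, so the statement amounts to checking that the two radius bounds, rewritten in terms of $t$, straddle $t$ and $3t$ once $t$ is large.

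Concretely, the lemma on the inner radius gives that $J_a$ is disjoint from the ball of radius $\tfrac12\log a - C_1 = \tfrac{4t}{3} - C_1$ about the origin, and $\tfrac{4t}{3} - C_1 \geq t$ precisely when $t \geq 3C_1$; for such $t$ the hexagon $J_a$ avoids the ball of radius $t$. Dually, the lemma on the outer radius gives that $J_a$ is contained in the ball of radius $\log a + C_2 = \tfrac{8t}{3} + C_2$, and $\tfrac{8t}{3} + C_2 \leq 3t$ precisely when $t \geq 3C_2$; for such $t$ the hexagon $J_a$ lies inside the ball of radius $3t$.

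The one small point to verify is that both preceding lemmata require $a \geq 2$, and $a = e^{8t/3} \geq 2$ as soon as $\tfrac{8t}{3} \geq \log 2$, i.e.\ $t \geq \tfrac{3}{8}\log 2$. Taking $C_3 = \max\{3C_1,\, 3C_2,\, \tfrac{3}{8}\log 2\}$ then yields the corollary. There is no genuine obstacle here: all the substantive estimation was done in establishing the inner and outer radii of $J_a$, and this corollary is just the bookkeeping needed to express those two bounds simultaneously in terms of a single scale parameter $t$, as a stepping stone toward the linear divergence argument for $\teich(\triangletimes)$.
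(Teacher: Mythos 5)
Your proof is correct and follows exactly the paper's approach: the paper simply refers back to the proof of Corollary \ref{cor:inout}, which performs the same substitution $a = e^{8t/3}$ and checks the two inequalities $\frac{4t}{3} - C_1 \geq t$ and $\frac{8t}{3} + C_2 \leq 3t$ for $t$ large. Your additional check that $a = e^{8t/3} \geq 2$ (so the hypotheses of the two radius lemmata are met) is a detail the paper leaves implicit, and it is handled correctly.
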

\begin{proof}
See the proof of Corollary \ref{cor:inout}
\end{proof}

\subsection{Linear divergence}

Since the hexagons $J_a$ have comparable inner radius, outer radius, and perimeter, it follows that geodesic rays from the origin in $\teich(\triangletimes)$ diverge at most linearly.

\begin{prop} \label{prop:divfromorigin2}
There exists a constant $C>0$ such that for any two geodesic rays $\eta$ and $\nu$ starting from the origin in $\teich(\triangletimes)$ and any $t>0$ we have
$
\divr(\eta,\nu,t) \leq 12 t + C.
$
\end{prop}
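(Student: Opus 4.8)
The plan is to run, essentially word for word, the argument used for $\teich(\pentagon)$ in the proof of Proposition~\ref{prop:divfromorigin}, with the hexagons $J_a$ playing the role of the pentagons $Q_a$ and Corollary~\ref{cor:inout2} playing the role of Corollary~\ref{cor:inout}. As there, after enlarging the additive constant we may assume $t$ is as large as we like; in particular we take $t>C_3$, so that with $a=e^{8t/3}$ the hexagon $J_a$ avoids the open ball $B(X_0,t)$ around the origin $X_0$ and is contained in $B(X_0,3t)$.

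First I would record the topological input. The closed curve $J_a$ bounds a compact region $R$ which contains $X_0$ (the fixed point of the $D_3$-action) and is contained in $B(X_0,3t)$; since $J_a$ does not meet $B(X_0,t)$, the whole ball $B(X_0,t)$ lies in $R$. Consequently $\eta(t)$ and $\nu(t)$ lie in $R$, and following $\eta$ (resp.\ $\nu$) forward from $\eta(t)$ (resp.\ $\nu(t)$) the ray must leave $R$, hence cross $J_a$, at some parameter at most $3t$, because $R\subset B(X_0,3t)$. These forward sub-rays stay at distance $\geq t$ from $X_0$, since $d(X_0,\eta(s))=s$, so they avoid $B(X_0,t)$. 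Then I would assemble the competitor path from $\eta(t)$ to $\nu(t)$: follow $\eta$ forward to its crossing with $J_a$, travel along $J_a$ — taking the shorter of the two arcs — to the crossing of $\nu$ with $J_a$, and return along $\nu$ to $\nu(t)$. The two ray pieces contribute total length at most $4t$ (each is at most $3t-t$), the arc along $J_a$ contributes at most half the perimeter of $J_a$, namely $3\log(2(1+a))$, and the path avoids $B(X_0,t)$. Hence
\[
\divr(\eta,\nu,t)\leq 4t+3\log\bigl(2(1+e^{8t/3})\bigr)\leq 4t+3\log\bigl(4e^{8t/3}\bigr)=12t+6\log 2,
\]
which is the claimed bound with $C=6\log 2$.

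I do not expect a real obstacle here; the only care needed is bookkeeping. One must check, exactly as in Corollary~\ref{cor:inout2}, that $\tfrac12\log a - C_1\geq t$ and $\log a + C_2\leq 3t$ hold for $t>C_3$ once $a=e^{8t/3}$, and one must notice that — in contrast to the convex, nested pentagons $P_t$ — the hexagons $J_a$ are not obviously convex, nor monotone in $a$. This is harmless: the argument uses only the elementary fact that a geodesic ray issuing from a point interior to a compact planar region must cross its boundary, together with the unit speed of rays from $X_0$, and neither ingredient relies on convexity.
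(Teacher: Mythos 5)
Your proposal is correct and follows essentially the same route as the paper, which simply transplants the argument of Proposition~\ref{prop:divfromorigin} using the hexagons $J_a$ and Corollary~\ref{cor:inout2}, bounding the contribution by $4t$ plus the half-perimeter $3\log(2(1+a))$ of $J_a$ to get $12t$ plus a constant. Your extra remark that convexity and nestedness of $J_a$ are not needed (only the topological fact that a ray from an interior point must cross the boundary curve) is a sensible clarification that the paper leaves implicit.
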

\begin{proof}
See the proof of Proposition \ref{prop:divfromorigin}. We obtain a better constant here because the half-perimeter of the hexagon $J_a$ with $a = e^{8t/3}$ is at most 
$$
3\log(2(1+a)) \leq 3 \log 3a = 8t + 3 \log 3.  
$$
to which we need to add at most $2t+2t=4t$ for joining $\eta(t)$ and $\nu(t)$ to $J_a$.
\end{proof}

By the triangle inequality, the divergence from any other point is at most linear as well.

\begin{cor} \label{cor:triangineq2}
For any $p \in \teich(\triangletimes)$, there exists a constant $D>0$ such that for any geodesic rays $\eta$ and $\nu$ from $p$ and any $t>0$ we have
$
\divr(\eta,\nu,t) \leq 12 t + D.
$
\end{cor}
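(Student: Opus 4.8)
The plan is to transfer the origin-based divergence estimate of Proposition \ref{prop:divfromorigin2} to an arbitrary basepoint $p$ by exactly the triangle-inequality maneuver used for Corollary \ref{cor:triangineq} in the pentagon case; together with that corollary, this completes the proof of Theorem \ref{thm:divergence}. Enlarging $D$ if necessary, it suffices to treat large $t$. Write $\origin$ for the origin of $\teich(\triangletimes)$, set $b = d(\origin, p)$, and let $C$ be the constant from Proposition \ref{prop:divfromorigin2}; I will show the estimate holds with $D = 16b + C$.

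First I would locate on each ray a point at a prescribed distance from the origin. Since $d(\origin, \eta(t)) \le t + b$ while $d(\origin, \eta(t + 2b)) \ge (t+2b) - b = t + b$, the intermediate value theorem yields $s \in [t, t+2b]$ with $d(\origin, \eta(s)) = t + b$, and likewise $s' \in [t, t+2b]$ with $d(\origin, \nu(s')) = t + b$.

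Then I would assemble a path from $\eta(t)$ to $\nu(t)$ avoiding the open ball $B(p,t)$ in three pieces. The first is the subarc of $\eta$ from $\eta(t)$ to $\eta(s)$: it has length $s - t \le 2b$ and, being a tail of a geodesic ray emanating from $p$, stays outside $B(p,t)$. For the middle piece, note that $\eta(s)$ and $\nu(s')$ both lie at distance $t+b$ from $\origin$, hence on geodesic rays issued from $\origin$; applying Proposition \ref{prop:divfromorigin2} to those two rays at time $t+b$ produces a path between them of length at most $12(t+b) + C$ disjoint from $B(\origin, t+b)$, and since $d(\origin,p) = b$ forces $B(p,t) \subset B(\origin, t+b)$, this path also misses $B(p,t)$. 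The third piece is the subarc of $\nu$ from $\nu(s')$ to $\nu(t)$, again of length at most $2b$ and outside $B(p,t)$. Concatenating, the total length is at most
$$
2b + \bigl(12(t+b) + C\bigr) + 2b = 12t + 16b + C = 12t + D.
$$

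There is no real obstacle here: the argument is pure bookkeeping, the only delicate point being the inclusion $B(p,t) \subset B(\origin, t+b)$ that lets the estimate at $\origin$ be imported to $p$. As in the pentagon case, the resulting constant $D$ depends on $p$ through $b = d(\origin, p)$; presumably this dependence could be removed by a sharper argument along the lines of \cite{DuchinRafi}, but that does not follow from the present method.
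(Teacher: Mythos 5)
Your argument is correct and is exactly the triangle-inequality transfer the paper intends: it leaves Corollary \ref{cor:triangineq2} as an immediate consequence of Proposition \ref{prop:divfromorigin2} via the same construction spelled out for the pentagon in Corollary \ref{cor:triangineq}, and your bookkeeping (including the constant $D=16b+C$ and the inclusion $B(p,t)\subset B(\origin,t+b)$) checks out.
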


This completes the proof of Theorem \ref{thm:divergence}.

\section{Universality} \label{sec:universal}

In this section, we prove Theorem \ref{thm:universal} which states that $\teich(\pentagon)$ and $\teich(\triangletimes)$ both embed isometrically in $\teich(\hexagon)$, the Teichm\"uller space of the hexagon, and that the latter embeds isometrically in the Teichm\"uller space of any closed surface of genus at least $2$.

The Teichm\"uller space $\teich(\hexagon)$ is defined analogously as for $\teich(\pentagon)$. Its points are equi\-va\-lence classes of bordered Riemann surfaces homeomorphic to the closed disk, with 6 marked points labelled in counter-clockwise order along the boundary. 

The dihedral group $D_6 \cong D_3 \times \ZZ_2$ acts isometrically on $\teich(\hexagon)$ by permuting the labels of the marked points and reversing the orientation when needed. If we take our base topological surface $\hexagon$ to be a regular hexagon in $\RR^2$, then $D_6$ acts on it by isometries. The quotient of $\hexagon$ by any of the $3$ reflections about lines through midpoints of opposite edges is a pentagon (the endpoints of the axis of reflection are critical points, hence their images have to be marked in the quotient). Each of these $3$ quotient maps is an admissible orbifold covering $\hexagon \to \pentagon$ which gives rise to an isometric embedding $\teich(\pentagon) \hookrightarrow \teich(\hexagon)$ according to Theorem \ref{thm:covering}.

Note that the $3$ copies of $\teich(\pentagon)$ obtained in this way all intersect along a single geodesic. Indeed, if an hexagon $X \in \teich(\hexagon)$ has two symmetries, it automatically has a third one. For example, if $X$ admits anti-conformal involutions acting as $\sigma = (12)(36)(45)$ and $\tau =(23)(14)(56)$ on the vertices, then it admits an anti-conformal involution acting as $\sigma\tau\sigma = (34)(25)(16)$.   

Similarly, there is a degree $2$ branched cover $\hexagon \to \triangletimes$ which we can view as the quotient of $\hexagon$ by the central symmetry about its center. This orbifold covering induces an isometric embedding $\teich(\triangletimes)\hookrightarrow \teich(\hexagon)$. Each of the $3$ copies of $\teich(\pentagon)$ in $\teich(\hexagon)$ intersects the image of $\teich(\triangletimes)$ along a geodesic. Indeed, these $3$ geodesics arise by taking the quotient of $\hexagon$ by $\ZZ_2 \times \ZZ_2$ groups, each generated by a side-to-side reflection together with the central symmetry. The quotient is a quadrilateral, whose Teichm\"uller space is isometric to $\RR$. These 3 geodesics of intersection correspond to the 3 axes of symmetry in $\teich(\triangletimes)$. See Figure \ref{fig:sketch} for a sketch of these $4$ planes sit inside $\teich(\hexagon)$. 

\begin{figure}[htp] 
\includegraphics[scale=.6]{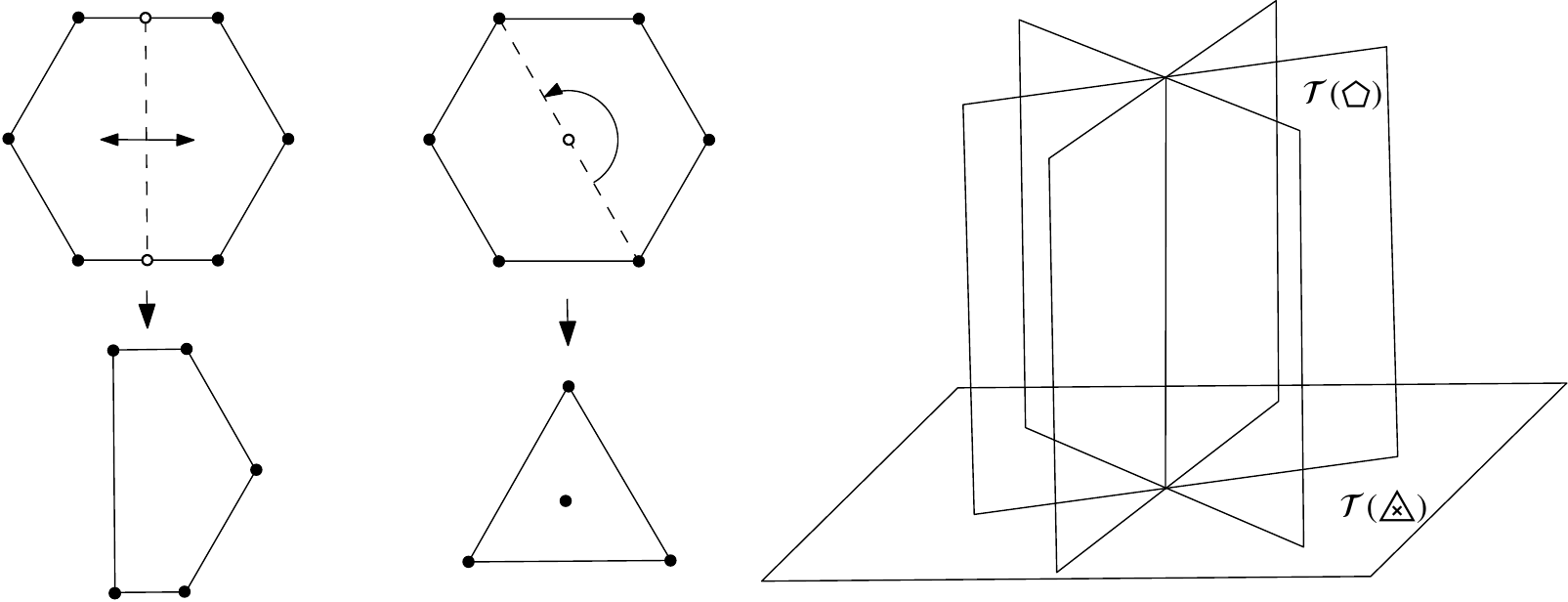}
\caption{Orbifold coverings $\hexagon \to \pentagon$ and $\hexagon \to \triangletimes$, and a sketch of how the resulting copies of $\teich(\pentagon)$ and $\teich(\triangletimes)$ sit inside $\teich(\hexagon)$.} \label{fig:sketch}
\end{figure}

Each point in $\teich(\hexagon)$ can be represented as the closed upper half-plane $\Hbar$ with marked points $x_1$, $x_2$, $x_3$, $\infty$, $-1$ and $0$, where $0<x_1<x_2<x_3$. With this normalization, the coordinate $$(\log(x_1),\log(x_2-x_1),\log(x_3-x_2))$$ provides a homeomorphism between $\teich(\hexagon)$ and $\RR^3$. Recall that each of the 3 copies of $\teich(\pentagon)$ and the copy of $\teich(\triangletimes)$ in $\teich(\hexagon)$ is the locus of fixed points of some involution in $D_6$. From this we find that they satisfy algebraic equations in the normalized coordinates $(x_1,x_2,x_3,\infty,-1,0)$:
\begin{itemize}
\item $\fix((12)(36)(45))\cong\teich(\pentagon)$ has equation $x_3 +1 = (x_1+1)(x_2 + 1) $;
\item $\fix((23)(14)(56))\cong\teich(\pentagon)$ has equation $x_1(x_1+1) = (x_2-x_1)(x_3 - x_1)$;
\item $\fix((34)(25)(16))\cong \teich(\pentagon)$ has equation $x_3(x_3-x_1) = (x_3-x_2)(x_3 + 1)$;
\item $\fix((34)(25)(16))\cong \teich(\triangletimes)$ has equation $$\left(\frac{x_3}{2}\right)^2 - \left(\frac{x_3}{2} - x_1\right)^2 = \left(\frac{x_2 + 1}{2}\right)^2 - \left(x_1 - \frac{x_2-1}{2}\right)^2.$$
\end{itemize} 
The regular hexagon corresponds to $(x_1,x_2,x_3)=(1/2,1,2)$. See Figure \ref{fig:plot} for a plot of part of these planes in log-coordinates. 

\begin{figure}[htp] 
\includegraphics[scale=.7, trim= 5cm 9cm 5cm 9.3cm, clip]{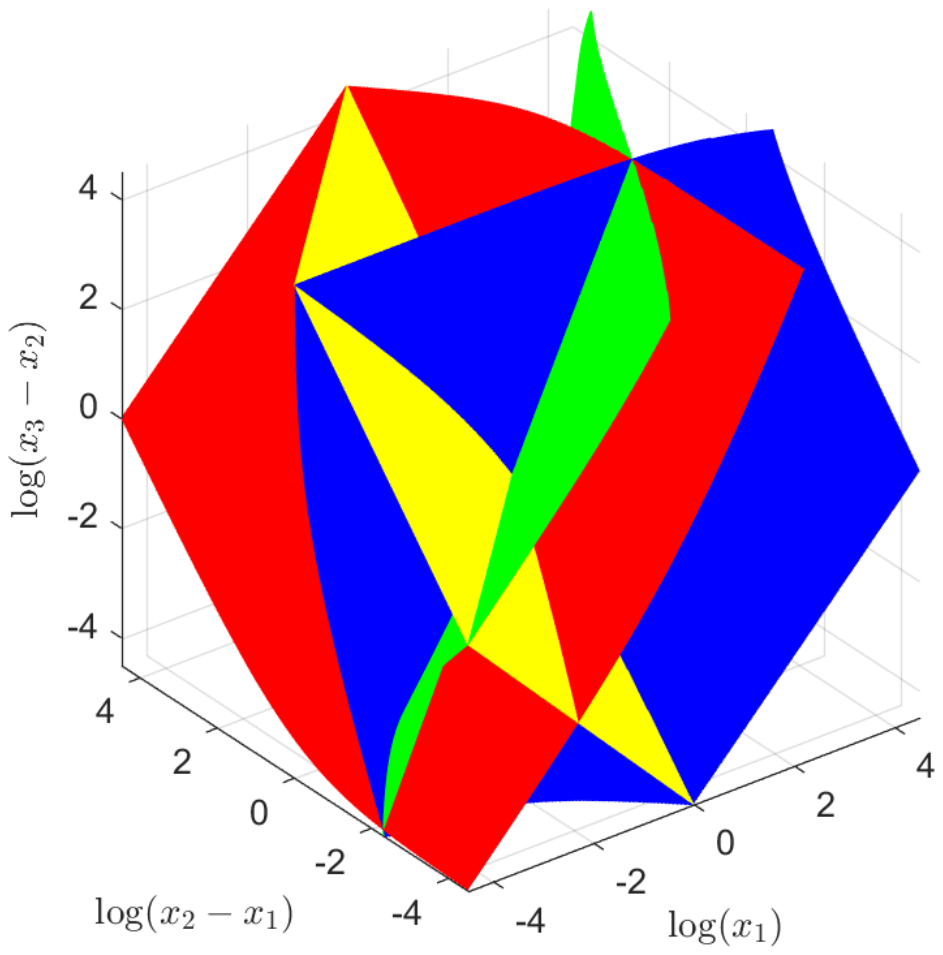}
\caption{A plot of the 3 copies of $\teich(\pentagon)$ and the copy of $\teich(\triangletimes)$ inside $\teich(\hexagon)$.} \label{fig:plot}
\end{figure}

As explained earlier, the $4$ planes described above intersect in pairs along $4$ geodesics, which we call \emph{axes of symmetry} of $\teich(\hexagon)$. In analogy with what we proved for $\teich(\pentagon)$ and $\teich(\triangletimes)$, we formulate the following conjectures:

\begin{conj}
For each of its $4$ axes of symmetry, $\teich(\hexagon)$ is foliated by totally geodesic planes invariant under the stabilizer of that axis in $D_6$. 
\end{conj}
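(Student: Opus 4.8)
We expect the conjecture to be provable along the lines of Lemma~\ref{lem:perp} and Propositions~\ref{prop:explicit} and~\ref{prop:explicit2}. Write $P_1,P_2,P_3$ and $T$ for the three copies of $\teich(\pentagon)$ and the copy of $\teich(\triangletimes)$ sitting inside $\teich(\hexagon)$, so that the four axes of symmetry are $A=P_1\cap P_2\cap P_3$ and $B_i=P_i\cap T$ for $i=1,2,3$. Fix one of them, and let $G\leq D_6$ be the subgroup fixing a generic point of it; a short computation shows that $G\cong D_3$ for $A$ (and $A=\fix(G)$, since $A$ is the entire locus of hexagons carrying a conformal symmetry of order $3$) while $G=\mathrm{Stab}_{D_6}(B_i)\cong\ZZ_2\times\ZZ_2$ for each $B_i$. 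In either case the axis equals $\fix(G)$, so for $x$ on it the group $G$ acts on $T_x\teich(\hexagon)\cong\RR^3$, fixing the tangent line to the axis and acting with no nonzero fixed vectors on a complementary $2$-plane. Inspecting the real two-dimensional representations of $D_3$ and of $\ZZ_2\times\ZZ_2$ with no trivial summand, one sees that there is a \emph{unique} $G$-invariant $2$-plane $V_x\subset T_x\teich(\hexagon)$ transverse to the axis, namely the span of the nontrivial isotypic components. (For $A$ the full stabilizer $D_6$ does not fix $x$ but interchanges the points of $A$, so the plane field $x\mapsto V_x$ is $D_6$-equivariant while each plane is only $D_3$-invariant; this is the sense in which the conjecture should be read there.)

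The decisive step is to integrate the field $x\mapsto V_x$ to totally geodesic leaves, and this is where the difficulty lies: unlike a Teichm\"uller disk, such a plane cannot arise as an $\mathrm{SL}(2,\RR)$-orbit, because quadratic differentials on a bordered surface must be real along the boundary and $\teich(\hexagon)$ carries no complex structure; nor do the leaves appear to come from a single covering construction as in Theorem~\ref{thm:covering}, which would otherwise give total geodesy for free. Their existence must therefore be established directly. The plan is to construct the leaf $\Pi_x$ through $x$ explicitly, in the spirit of the symmetric rectangular structures $\Phi_a$: write down a two-parameter family of quadratic differentials on the hexagon invariant under $G$, pass to the corresponding two-parameter family of points of $\teich(\hexagon)$, and verify---by extremal-length estimates and Kerckhoff's formula (\ref{eqn:kerck}), exactly as in Propositions~\ref{prop:intersection} and~\ref{prop:intersection2}---that this family is convex, totally geodesic, and preserved by $G$ with fixed point $x$. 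An equivalent and more conceptual description, which ties the statement back to our main theorems, is this: inside each totally geodesic plane through the axis (the three $\teich(\pentagon)$'s, and for the $B_i$ also $\teich(\triangletimes)$), Proposition~\ref{prop:explicit} or~\ref{prop:explicit2} produces a unique geodesic through $x$ symmetric about the axis, and one must show that the two or three geodesics so obtained lie in a common totally geodesic plane $\Pi_x$. Proving this coplanarity---equivalently, the total geodesy of the explicit family---is the main obstacle, and it appears to demand the sort of hands-on extremal-length analysis used in the body of the paper rather than a soft argument.

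Granting the leaves $\Pi_x$, the foliation statement follows as in Lemma~\ref{lem:perp}. Two distinct leaves are disjoint: a common point would have to lie off the axis, but through a point $p\notin\fix(G)$ any $G$-invariant totally geodesic plane contains the orbit $G\cdot p$, which spans at most one totally geodesic plane (a totally geodesic submanifold is determined by any three of its points in general position), so at most one leaf passes through $p$. That the $\Pi_x$ cover $\teich(\hexagon)$ is obtained as the surjectivity in Proposition~\ref{prop:explicit}: parametrize the leaves by the axis $A\cong\RR$ through a distinguished extremal length---the analogue of $a\mapsto\el(\alpha_1,\Phi_a)$---and use explicit upper and lower bounds together with the intermediate value theorem to see that every value is attained; alternatively, the leaves form a continuous family whose union is open and closed in $\teich(\hexagon)\cong\RR^3$. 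Continuity of the foliation is inherited from the continuous dependence of extremal length on the point. Finally, we note that the plane $T=\teich(\triangletimes)$ is itself the leaf of the $A$-foliation through the regular hexagon, since it is invariant under all of $D_6$ and meets $A$ only there; this is a convenient starting point for the explicit construction above.
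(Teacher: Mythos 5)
The statement you are trying to prove is presented in the paper as a \emph{conjecture}: the authors offer no proof, only the analogy with Lemma~\ref{lem:perp} and Propositions~\ref{prop:explicit} and~\ref{prop:explicit2}. Your proposal does not close that gap --- and you say so yourself. The entire content of the conjecture is the existence, through each point of the axis, of a totally geodesic plane invariant under the relevant subgroup of $D_6$, and your text explicitly defers this (``their existence must therefore be established directly,'' ``the main obstacle''). The surrounding material is reasonable scaffolding: the identification of the four axes as $P_1\cap P_2\cap P_3$ and $P_i\cap T$, the computation of the pointwise stabilizers $D_3$ and $\ZZ_2\times\ZZ_2$, the observation that the derivative action at a point $x$ of the axis admits a unique invariant $2$-plane transverse to the axis (the sum of the nontrivial isotypic components, since $\fix(G)$ in $T_x$ must coincide with the tangent line to the axis), and the correct diagnosis that the leaves can come neither from a single covering construction nor from an $\mathrm{SL}(2,\RR)$-orbit, so no soft argument is available. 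But an invariant tangent $2$-plane at each point of a one-dimensional locus is very far from an integrable field of totally geodesic leaves filling a $3$-manifold, and the reduction to ``coplanarity of the two or three symmetric geodesics produced by Propositions~\ref{prop:explicit} and~\ref{prop:explicit2}'' is a restatement of the problem, not a solution. As it stands this is a research program, not a proof.

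Two of the steps you treat as routine also need justification. First, your disjointness argument rests on the assertion that a totally geodesic plane is determined by any three of its points in general position. Nothing in the paper supports this, and Section~\ref{sec:universal} explicitly exhibits distinct totally geodesic planes meeting along a geodesic; the clean uniqueness in Lemma~\ref{lem:perp} works because two points determine a geodesic in a uniquely geodesic space, and there is no analogous elementary fact for planes. You would need to show, e.g., that two convex planes sharing a $2$-dimensional convex set coincide, which requires an argument about extending geodesics within a leaf. Second, the covering step (``the union of the leaves is open and closed'') presupposes that nearby leaves locally fill a neighborhood of a given leaf, i.e.\ exactly the kind of transversality and continuity that the explicit parametrization $a\mapsto\Phi_a$ furnished in Proposition~\ref{prop:explicit}; without the explicit two-parameter family of invariant rectangular structures on the hexagon, neither openness nor the intermediate-value argument can be run. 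Until that family is written down and shown to sweep out totally geodesic sets, the conjecture remains open.
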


\begin{conj}
$\teich(\hexagon)$ is a nested union of $D_6$-invariant convex triangular prisms with totally geodesic faces.
\end{conj}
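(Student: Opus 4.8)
The plan is to run, for $\teich(\hexagon)$, the same program that produced Theorems \ref{thm:exhaustionpent} and \ref{thm:exhaustiontri}, now using all four axes of symmetry of $\teich(\hexagon)$ and the four foliations of the preceding conjecture. The first task is to establish that conjecture, or at least the part of it needed here: that for each axis $\teich(\hexagon)$ is swept out by a nested family of totally geodesic planes invariant under the stabiliser of that axis. Fix one axis, say the geodesic $\gamma$ along which the three copies of $\teich(\pentagon)$ meet. Since the central symmetry is central in $D_6$ it preserves $\gamma$, and together with the order-six dihedral subgroup $H$ generated by the three edge-to-edge reflections (which fixes $\gamma$ pointwise) this shows that $\gamma$ is in fact $D_6$-invariant, the quotient $\ZZ_2$ acting on $\gamma\cong\RR$ by the reflection at the regular hexagon. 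Uniqueness of an $H$-invariant leaf through a given $X$ should reduce, as in Lemma \ref{lem:perp}, to the observation that the $H$-action on the three-dimensional tangent space at $X$ generically singles out a unique invariant $2$-plane, namely the tangent plane to the sought-for leaf. For existence --- the genuinely new ingredient --- I would write down an explicit two-parameter family of rectangular structures on the hexagon carrying the prescribed symmetry, in the spirit of the $L$-shapes $\Phi_a$ and the structures $\Psi_a$, show that the corresponding family of points in $\teich(\hexagon)$ fills out a properly embedded $2$-dimensional submanifold (total geodesicity being the hard point, addressed below), and use monotonicity of extremal length to check that these submanifolds are disjoint, nested, and exhaust $\teich(\hexagon)$. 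The other three axes --- each a copy of the Teichm\"uller space of a quadrilateral, with stabiliser a Klein four-group --- are treated the same way.

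Granting this, the prisms are assembled just as $P_t$ and the triangles $Q_a$ were. For the $\gamma$-foliation, a leaf $\Pi_a$ meets $\gamma$ at signed distance $a$ from the origin; its $D_6$-orbit is the pair $\{\Pi_a,\Pi_{-a}\}$ (interchanged by the reflection on $\gamma$), each leaf invariant under $H\cong D_3$, and the two together bound a slab $S_a$ containing the origin. For one of the three pentagon--triangle axes $\delta_1$, a leaf $\Pi'_a$ of the corresponding foliation bounds a closed half-space $U_a$ containing the origin, and its $D_6$-orbit consists of three leaves, one in each of the $\delta_i$-foliations. After reparametrising so that a single parameter $a$ controls all five planes, set $R_a=S_a\cap\bigcap_{g\in D_6}g(U_a)$. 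By construction $R_a$ is $D_6$-invariant, convex, nested in $a$, bounded by at most five totally geodesic faces, and exhausts $\teich(\hexagon)$. When all five faces actually meet, $R_a$ is a triangular prism: two triangular faces, one in each of $\Pi_{\pm a}$, and three rectangular faces, one in each $g(\Pi'_a)$; its symmetry group $D_6\cong D_3\times\ZZ_2$ is realised as the full symmetry group of the prism, with $H$ acting on the triangular cross-section and the central symmetry swapping the two ends.

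What remains is the analogue of Propositions \ref{prop:intersection}, \ref{prop:intersection2} and \ref{prop:intersection3}: that consecutive faces intersect, i.e. that $\Pi'_a$ meets $\Pi_{\pm a}$ and meets the other two $g(\Pi'_a)$. Each such intersection I would establish in the same two moves used there --- first characterise the relevant meeting locus as an equal-extremal-length set, the way $\gamma_5$ and $\gamma_3$ were characterised in Lemmas \ref{lem:equalissym} and \ref{lem:equalissym2}, then apply the intermediate value theorem along $\Pi'_a$ between two matching extremal lengths, each bounded above and below on an explicit rectangular structure. Monotonicity of extremal length in the parameter, as in the proofs of Propositions \ref{prop:explicit} and \ref{prop:explicit2}, supplies the nesting $R_a\subset R_b$.

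The main obstacle is the existence half of Step 1, specifically showing that the explicit two-parameter families span \emph{totally geodesic} rather than merely quasi-isometrically embedded planes. In the two-dimensional cases the ``perpendicular'' objects were Teichm\"uller geodesics, which are automatically geodesic, whereas a two-dimensional totally geodesic submanifold is far more rigid; establishing total geodesicity would require recognising each leaf either as the image of a covering construction for some orbifold cover of the hexagon --- not necessarily one of the obvious degree-$2$ ones, so identifying the cover and checking the hypothesis $f^{-1}(Q)=P\cup\crit(f)$ of Theorem \ref{thm:covering} is itself work --- or as a Teichm\"uller disk, or else giving a direct argument that the relevant maps realise equality in Kerckhoff's formula. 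A secondary difficulty is the combinatorial bookkeeping confirming that the $D_6$-orbit of the chosen leaves cuts out exactly a prism and not some other convex polytope, an argument about the cyclic order of the bounding planes in Thurston's compactification $\overline{\teich(\hexagon)}\cup\pmf(\hexagon)$, parallel to the $\pmf(\pentagon)$ linking argument used to close the gap in Theorem \ref{thm:exhaustionpent}.
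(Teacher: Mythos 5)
This statement is one of the two items the paper explicitly labels as a \emph{conjecture}; the paper offers no proof of it, so there is nothing to compare your argument against except the genuine two-dimensional results it is modelled on. Your proposal is a reasonable research program, but it is not a proof, and you have correctly located the gap yourself: everything downstream (the assembly of the prisms $R_a$, the intermediate-value arguments for consecutive faces meeting, the nesting) is conditional on the preceding conjecture that $\teich(\hexagon)$ is foliated by totally geodesic planes invariant under the stabiliser of each axis, and that conjecture is exactly as open as this one. The obstruction is not a matter of bookkeeping. In the two-dimensional cases the ``leaves'' perpendicular to an axis were Teichm\"uller geodesics, and Teichm\"uller's theorem guarantees that \emph{every} unit cotangent vector integrates to a geodesic; that is why the analogue of Lemma \ref{lem:perp} gives both existence and uniqueness there. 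In dimension three, an $H$-invariant $2$-plane in the cotangent space at a point of the axis certainly exists, but there is no integration theorem producing a totally geodesic surface tangent to it: totally geodesic submanifolds of Teichm\"uller space of dimension at least $2$ are extremely rigid, and the known sources are Teichm\"uller disks and covering constructions. The leaves you need sweep out all of $\teich(\hexagon)$, so all but one of them pass through points \emph{not} fixed by the stabiliser $H$ of the axis; hence they cannot be the fixed-point loci of the covering constructions visible in the paper (those are the single copies of $\teich(\pentagon)$ and $\teich(\triangletimes)$ through the origin), and you give no candidate mechanism --- neither an orbifold cover satisfying the hypothesis of Theorem \ref{thm:covering} nor a direct verification of equality in Kerckhoff's formula --- that would make your explicit two-parameter families of rectangular structures totally geodesic rather than merely $2$-dimensional.

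A secondary issue: even granting the foliations, your uniqueness argument at a point $X$ off the axis (``the $H$-action generically singles out a unique invariant $2$-plane in the tangent space'') only constrains the tangent plane of a leaf through $X$ \emph{if one exists}; it does not produce one, and for $X$ off the axis the stabiliser $H$ does not act on the tangent space at $X$ at all (it moves $X$), so the argument of Lemma \ref{lem:perp} does not transplant in the form you state. The correct analogue would pair $X$ with its $H$-orbit, which has up to six points, and you would need the six points to lie on a common totally geodesic plane --- a much stronger condition than two points lying on a common geodesic. In short, the proposal is a faithful extrapolation of the paper's method and correctly identifies where it breaks, but the central existence step is missing and is precisely why the statement remains a conjecture.
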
 

This would imply that the convex hull of any compact set in $\teich(\hexagon)$ is compact.

Back to the proof of Theorem \ref{thm:universal}. We claim that there is an isometric embedding $\teich(\hexagon) \hookrightarrow \teich(\Sigma_2)$ where $\Sigma_2$ is the closed surface of genus $2$. To see this, it suffices to give an admissible orbifold covering $\Sigma_2 \to \hexagon$. There are at least two distinct such coverings. First quotient $\Sigma_2$ by the hyper-elliptic involution to obtain a sphere with $6$ marked points, then quotient the sphere by an orientation-reversing involution fixing the 6 marked points to obtain the hexagon. Another orbifold covering is obtained as follows. First double $\hexagon$ across $3$ non-adjacent sides to get a pair of pants, then double the pair of pants across its boundary to obtain a genus $2$ surface. Reversing this process gives an orbifold covering $\Sigma_2 \to \hexagon$. Finally, it is well-known that there is a covering map $\Sigma_g \to \Sigma_2$ for every $g \geq 2$, so that $\teich(\Sigma_2)$ embeds isometrically into $\teich(\Sigma_g)$ for every $g\geq 2$ (see Figure \ref{fig:universal}).

\begin{figure}[htp] 
\includegraphics[scale=.7]{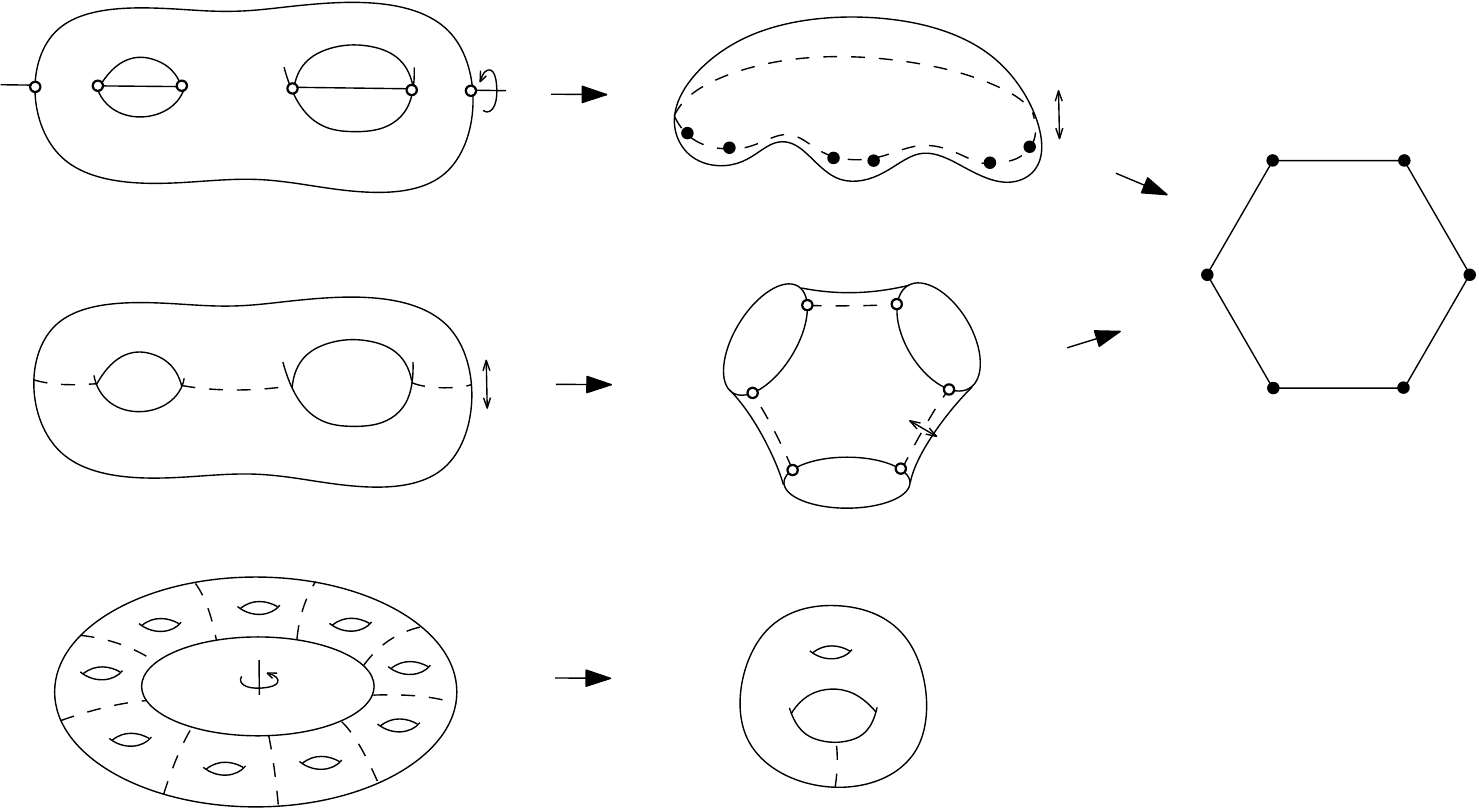}
\caption{Orbifold coverings $\Sigma_2 \to \hexagon$ and $\Sigma_g \to \Sigma_2$ for $g \geq 2$.}  \label{fig:universal}
\end{figure}

\bibliographystyle{amsalpha}

\providecommand{\bysame}{\leavevmode\hbox to3em{\hrulefill}\thinspace}
\providecommand{\MR}{\relax\ifhmode\unskip\space\fi MR }
\providecommand{\MRhref}[2]{%
  \href{http://www.ams.org/mathscinet-getitem?mr=#1}{#2}
}
\providecommand{\href}[2]{#2}

\end{document}